\numberwithin{equation}{section}
\newcommand\cA{\mathcal{A}}
\newcommand\cC{\mathcal{C}}
\newcommand\cE{\mathcal{E}}
\newcommand\cF{\mathcal{F}}
\newcommand\cH{\mathcal{H}}
\newcommand\cM{\mathcal{M}}
\newcommand\cO{\mathcal{O}}
\newcommand\cP{\mathcal{P}}
\newcommand\SIGMA{\vec\sigma}
\newcommand\TAU{\vec\tau}
\newcommand{\Po}{{\rm Po}}
\renewcommand{\vec}[1]{\boldsymbol{#1}}
\newcommand{\vecone}{\vec{1}}
\newcommand\KL[2]{D_{\mathrm{KL}}\bc{{{#1}\|{#2}}}}
\newcommand\eps{\varepsilon}
\newcommand\pr{\mathbb{P}} 
\renewcommand\Pr{\pr}
\newcommand\Erw{\mathbb{E}}
\newcommand{\whp}{w.h.p.}
\newcommand{\tensor}{\otimes}
\newtheorem{definition}{Definition}[section]
\newtheorem{theorem}[definition]{Theorem}
\newtheorem{lemma}[definition]{Lemma}
\newtheorem{proposition}[definition]{Proposition}
\newtheorem{corollary}[definition]{Corollary}
\newtheorem{fact}[definition]{Fact}
\newcommand\Lem{Lemma}
\newcommand\Prop{Proposition}
\newcommand\Thm{Theorem}
\newcommand\Cor{Corollary}
\newcommand\Sec{Section}
\newcommand\bc[1]{\left({#1}\right)}
\newcommand\cbc[1]{\left\{{#1}\right\}}
\newcommand\brk[1]{\left\lbrack{#1}\right\rbrack}
\newcommand\norm[1]{\left\|{#1}\right\|}
\newcommand\abs[1]{\left|{#1}\right|}
\newcommand{\Erdos}{Erd\H{o}s}
\newcommand{\Renyi}{R\'enyi}
\newcommand{\bks}{\beta_{\text{KS}}}
\renewcommand\Pr{\pr}
\newcommand{\G}{\mathbb{G}}
\newcommand{\Gnd}{\mathbb{G}(n,d)}
\newcommand{\Zgb}{Z_{\G, \beta}}
\renewcommand{\vec}[1]{\boldsymbol{#1}}
\newcommand{\evO}{\vecone \cbc{\cO}}
\newcommand\evC[1]{\vecone \cbc{\cC{#1}}}
\newcommand\Fac{Fact}
\newcommand{\Gpone}{\mathbf{G}^*_1}
\newcommand{\Gptwo}{\mathbf{G}^*_2}
\newcommand{\Gbold}{\mathbf{G}}
\begin{document}
	
	\title{The Ising antiferromagnet in the replica symmetric phase}
	
	\thanks{The authors thank Amin Coja-Oghlan for helpful discussions and insights. The authors also thank Mark Sellke for helpful comments. Philipp Loick is supported by DFG CO 646/3.}

	\author{Christian Fabian, Philipp Loick}

    \address{Christian Fabian, {\tt cfabian@stud.uni-frankfurt.de}, Goethe University, Mathematics Institute, 10 Robert Mayer St, Frankfurt 60325, Germany.}
    \address{Philipp Loick, {\tt loick@math.uni-frankfurt.de}, Goethe University, Mathematics Institute, 10 Robert Mayer St, Frankfurt 60325, Germany.}
	
	\begin{abstract}
	Partition functions are an important research object in combinatorics and mathematical physics [Barvinok, 2016].
    In this work, we consider the partition function of the Ising antiferromagnet on random regular graphs and characterize its limiting distribution in the replica symmetric phase up to the Kesten-Stigum bound. Our proof relies on a careful execution of the method of moments, spatial mixing arguments and small subgraph conditioning. 
	\end{abstract}
	
	\maketitle

\section{Introduction}

\subsection{Motivation}

The Ising model, invented by Lenz in 1920 to explain magnetism, is a cornerstone in statistical physics. 
Consider any graph $G$ with vertex set $V$ and edge set $E$. Each vertex carries one of two possible spins $\pm 1$ and the interactions between vertices are represented by $E$. For a spin configuration $\sigma \in \cbc{\pm 1}^V$ on $G$, we can consider the Hamiltonian $\cH_G$
\begin{equation*}
    \cH_G(\sigma) = \sum_{(v,w) \in E} \frac{1 + \sigma_v \sigma_w}{2}.
\end{equation*}
Together with a real parameter $\beta>0$ the Hamiltonian gives rise to a distribution on spin configurations defined by
\begin{equation} \label{eq_boltzmann}
    \mu_{G,\beta}(\sigma) = \frac{\exp\bc{-\beta \cH_G(\sigma)}}{Z_{G,\beta}} \quad \bc{\sigma \in \cbc{\pm 1}^V} \quad \text{where} \quad Z_{G,\beta} = \sum_{\tau \in \cbc{\pm 1}^V} \exp \bc{-\beta \cH_G(\tau)}.
\end{equation}
The probability measure $\mu_{G,\beta}$ is known as the Boltzmann distribution with the normalizing term $Z_{G, \beta}$ being the partition function. $\mu_{G,\beta}$ favors configurations with few edges between vertices of the same spin which is known as the antiferromagnetic Ising model. There is a corresponding formulation of \eqref{eq_boltzmann} where edges between vertices of the same spin are preferred - the ferromagnetic Ising model. Both models are of great interest in combinatorics and physics and the literature on each is vast \cite{Huang_2009}.

In this paper, we study the Ising antiferromagnet on the random $d$-regular graph $\G=\G(n,d)$. One might be tempted to think that the regularities of this graph model provide a more amenable study object than its well-known \Erdos-\Renyi\ counterpart with fluctuating vertex degrees. However, for the Ising model the reverse seems to be true. 
Indeed, the independence of edges in the \Erdos-\Renyi-model greatly facilitates deriving the distribution of short cycles in the planted model and simplifies the calculation of both the first and second moment. 

Clearly, $\mu_{\G, \beta}$ gives rise to correlations between spins of nearby vertices. The degree of such correlations is governed by the choice of $\beta$. A question which is of keen interest in combinatorics and statistical physics is whether such correlations persist for two uniformly sampled (and thus likely distant) vertices. According to physics predictions, for small values of $\beta$ we should observe a rapid decay of correlation \cite{Mezard_2009} and thus no \textit{long-range correlations}. This regime is known as the \textit{replica symmetric phase}. It is suggested that there exists a specific $\beta$ which marks the onset of long-range correlations in $\G$. This value is conjectured to be at the combinatorially meaningful Kesten-Stigum bound \cite{Coja_2020}
\begin{equation*}
    \bks = \log \bc{\frac{\sqrt{d-1}+1}{\sqrt{d-1}-1}}.
\end{equation*}
The question of long-range correlations is tightly related to the partition function $Z_{\G, \beta}$ from which also various combinatorially meaningful observables can be derived. The {\sc Max Cut} on random $d$-regular graphs is a case in point due to the well-known relation
\begin{equation*}
\mbox{\sc MaxCut}(G)=\frac{dn}{2}+\lim_{\beta\to\infty}\frac{\partial}{\partial\beta}\log Z_{G,\beta}.
\end{equation*}
for any graph $G$. Thus, it is of key interest to understand the behavior of $Z_{\G, \beta}$.

\subsection{Result}

In recent work, \cite{Coja_2020} were able to pinpoint the replica symmetry breaking phase transition at the Kesten-Stigum bound, thus charting the replica symmetric phase for the Ising antiferromagnet on random $d$-regular graphs.
The key feature of the replica-symmetric phase is that \whp\ two independent samples $\SIGMA_1, \SIGMA_2$ from the Boltzmann distribution $\mu_{\G, \beta}$ exhibit an almost flat overlap in the sense that $\abs{\SIGMA_1 \cdot \SIGMA_2} = o(n) $.
To be precise, \cite{Coja_2020} determined $\Zgb$ up to an error term $\exp(o(n))$ for $\beta < \bks$.
In this paper, we move beyond this crude approximation. By deriving the limiting distribution in the replica-symmetric phase, we show that $Z_{\G, \beta}$ is tightly concentrated with bounded fluctuations which we can quantify and attribute to short cycles in $\G$. 

\begin{theorem} \label{thm_dist}
Assume that $0 < \beta < \bks$ and $d \geq 3$. Let $(\Lambda_i)_{i}$ be a sequence of independent Poisson variables with $\Erw \brk{\Lambda_i} = \lambda_i$ where  $\lambda_i = \frac{\bc{d - 1}^i }{2 i}$.
Then as $n \to \infty$ we have
\begin{align*}
	&\log \bc{Z_{\Gnd, \beta}} - \frac{1}{2} \log \bc{\frac{1 + e^{ \beta}}{ 2 + d  e^{\beta} - d }} - n \bc{\bc{1 - \frac{d}{2}} \log \bc{2}  + \frac{d}{2} \log\bc{1 + e^{- \beta}}} + \frac{d - 1}{2} \frac{e^{-\beta}-1}{e^{-\beta}+1} + \frac{\bc{d - 1}^2}{4} \bc{\frac{e^{-\beta}-1}{e^{-\beta}+1}}^2\\
	&\qquad \qquad \qquad \qquad \xrightarrow{d} \log \bc{W} := \sum_{i = 3}^\infty \Lambda_i \log \bc{1+\bc{\frac{e^{-\beta}-1}{e^{-\beta}+1}}^i} -  \frac{\bc{d - 1}^i}{2 i} \bc{\frac{e^{-\beta}-1}{e^{-\beta}+1}}^i.
\end{align*}
The infinite product defining $W$ converges a.s. and in $L^2$.
\end{theorem}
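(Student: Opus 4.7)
The plan is to apply Janson's small subgraph conditioning theorem, decorating $Z_{\G,\beta}$ by the short cycle counts $X_i = X_i(\G)$, $i\geq 3$, which on $\Gnd$ jointly converge to independent Poissons $\Lambda_i$ with means $\lambda_i = (d-1)^i/(2i)$. Setting $\delta_i := \bc{\frac{e^{-\beta}-1}{e^{-\beta}+1}}^i$, the announced limit reads $W = \prod_{i\geq 3}(1+\delta_i)^{\Lambda_i}\exp(-\lambda_i\delta_i)$, so the theorem will follow once three hypotheses are verified: (a) a sharp first-moment asymptotic for $\Erw[Z_{\G,\beta}]$ identifying the affine-in-$n$ exponent and the Gaussian prefactor $\bc{(1+e^\beta)/(2+de^\beta - d)}^{1/2}$; (b) joint moments $\Erw[Z_{\G,\beta}\prod_{i=3}^k(X_i)_{j_i}]/\Erw[Z_{\G,\beta}] \to \prod_i\bc{\lambda_i(1+\delta_i)}^{j_i}$ for every finite tuple $(j_i)$; and (c) a second-moment ratio $\Erw[Z_{\G,\beta}^2]/\Erw[Z_{\G,\beta}]^2 \to \exp\bc{\sum_{i\geq 3}\lambda_i\delta_i^2}$. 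The two deterministic correction terms $\frac{d-1}{2}\cdot\frac{e^{-\beta}-1}{e^{-\beta}+1}$ and $\frac{(d-1)^2}{4}\bc{\frac{e^{-\beta}-1}{e^{-\beta}+1}}^2$ in the theorem are exactly $\lambda_1\delta_1$ and $\lambda_2\delta_2$: they arise from the loops and parallel edges that must be conditioned out in passing from the configuration model to the simple graph $\Gnd$.

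\textbf{Moments.} For (a) I would work in the configuration model, write $\Erw[Z_{\G,\beta}] = \sum_\sigma\Erw\exp(-\beta\cH_\G(\sigma))$, and group by the magnetisation $m = n^{-1}\sum_v\sigma_v$. The inner expectation reduces to a multinomial pairing count, and the Laplace method pins the sum to the unique maximiser $m=0$ with strictly negative Hessian, delivering precisely the announced Gaussian prefactor. For (c) I would sum over ordered pairs $(\sigma,\tau)$ and parametrise by the overlap $\rho = n^{-1}\sigma\cdot\tau \in[-1,1]$; the resulting integrand has a critical point at $\rho=0$, and the content of $\beta<\bks$ is precisely that its Hessian there is negative definite, since $\bks$ is the value at which this maximum bifurcates. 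A Laplace expansion then yields a constant ratio $C := \lim\Erw[Z_{\G,\beta}^2]/\Erw[Z_{\G,\beta}]^2$, and the elementary identity $-\tfrac{1}{2}\log(1-(d-1)x^2) = \sum_{i\geq 1}\frac{(d-1)^i x^{2i}}{2i}$ at $x = (e^{-\beta}-1)/(e^{-\beta}+1)$ matches $\log C$ with $\sum_{i\geq 1}\lambda_i\delta_i^2$, the $i=1,2$ terms of which get absorbed into the deterministic prefactor. For (b) I would condition on a prescribed collection of short cycles; once these are fixed, the residual pairing is again a configuration model on a slightly smaller vertex set, and a direct transfer-matrix trace on a length-$i$ cycle---using the $2\times 2$ matrix with eigenvalues $e^{-\beta}\pm 1$---shows that each cycle multiplies the ratio by exactly $1+\delta_i$ under the first-moment optimiser.

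\textbf{Main obstacle.} The crux is the \emph{uniform} second-moment upper bound: one must rule out any macroscopic contribution to $\Erw[Z_{\G,\beta}^2]$ from overlaps $\rho$ bounded away from zero, throughout the entire regime $\beta<\bks$. Promoting the analytic fact that $\rho=0$ is a strict local maximiser to a \emph{global} estimate on $[-1,1]$, and dominating in particular the entropic spikes at $\rho=\pm 1$, is where the spatial mixing / contiguity arguments advertised in the abstract become essential and where the Kesten--Stigum threshold enters sharply. Once (a)--(c) are in place, the a.s.\ and $L^2$ convergence of the infinite product defining $W$ follows by standard arguments from the summability $\sum_i\lambda_i\delta_i^2<\infty$, which is equivalent to $(d-1)\bc{\frac{e^{-\beta}-1}{e^{-\beta}+1}}^2<1$, i.e., again exactly $\beta<\bks$.
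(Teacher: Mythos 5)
Your proposal follows the same architecture as the paper -- Janson's small subgraph conditioning theorem fed by a sharp first moment, cycle joint moments, and a second-moment ratio matching $\exp\bc{\sum_{i\geq 3}\lambda_i\delta_i^2}$ -- and your bookkeeping (the corrections $\lambda_1\delta_1,\lambda_2\delta_2$ from loops and double edges, the identity $-\tfrac12\log(1-(d-1)\delta_1^2)=\sum_{i\geq1}\lambda_i\delta_i^2$, and the equivalence of $\sum_i\lambda_i\delta_i^2<\infty$ with $\beta<\bks$) is exactly what the paper uses. Two points of divergence are worth recording. First, for Condition (2) you propose fixing the cycles and running a transfer-matrix trace directly in the configuration model; the paper instead routes this through the Nishimori property, identifying the reweighted graph $\hat\G$ with the planted model $\G^*$ and then computing the cycle distribution of $\G^*$ (its \Prop~\ref{prop_cycle_planted}, proved by a conditional method of moments given the edge-type profile $\hat\mu$). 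The two computations are equivalent in content, but the planted-model route is what lets the paper control the dependencies among edges that the regularity constraint introduces -- the very issue your transfer-matrix sketch glosses over. Second, and more importantly, your ``main obstacle'' paragraph proposes to \emph{promote} the local maximality of the zero-overlap point to a global second-moment bound on all of $[-1,1]$. The paper never proves such a global bound: it applies Janson's theorem to the truncated variable $Z_{\G,\beta}\vecone\cbc{\cO}$, where $\cO$ is the event that two Boltzmann samples have overlap $o(n)$, computes $\Erw\brk{Z_{\G,\beta}^2\evO}$ by maximizing only over the restricted set $\cO'$ of overlap profiles (\Lem~\ref{sec_mom_optimum}), and invokes \Lem~\ref{lem_o} (spatial mixing, imported from the earlier work) to guarantee $\Erw\brk{Z_{\G,\beta}\evO}=(1-o(1))\Erw\brk{Z_{\G,\beta}}$, so that the truncation costs nothing and Markov's inequality transfers the limit law back to $Z_{\G,\beta}$ itself. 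This truncation device is not a cosmetic difference: a genuinely global second-moment estimate for the untruncated partition function throughout $\beta<\bks$ is exactly the kind of statement that tends to fail or to require much heavier machinery, so as literally stated your plan for step (c) would stall, whereas the conditioned version is what makes the whole replica symmetric phase accessible. You correctly name spatial mixing as the missing ingredient, but the precise way it enters -- truncate first, then show the truncation is free -- is the step your outline leaves unspecified.
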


Taking the expectation of this distribution readily recovers the first part of the result by \cite{Coja_2020}. 
The proof of \Thm~\ref{thm_dist} relies on the combination of the method of moments and small subgraph conditioning enriched in our case by spatial mixing arguments to make the calculation of the second moment tractable.

\section{Techniques}

\subsection{Notation}

Let $\G=\Gnd$ denote a random $d$-regular graph on $n$ vertices. We consider sparse graphs with constant $d$ as $n \to \infty$. Throughout the paper, we will employ standard Landau notation with the usual symbols $o(\cdot), O(\cdot), \Theta(\cdot), \omega(\cdot)$, and $\Omega(\cdot)$ to refer to the limit $n \to \infty$. We say that a sequence of events $(\cE_n)_n$ holds \textit{with high probability} (\whp) if $\lim_{n\to\infty} \Pr \brk{\cE_n} = 1$. When the context is clear we might drop the index of the expectation. Moreover, we will use the proportional $\propto$ to hide necessary normalisations.

\subsection{Outline}

To get a handle on the distribution of $Z_{\G, \beta}$ in the replica symmetric phase, we need to identify the sources of fluctuations of $Z_{\G, \beta}$. One obvious source is the number of short cycles. Since $\G$ is sparse and random, standard arguments reveal that $\G$ contains only few short cycles. In the following, let $C_i(G)$ denote the number of short cycles of length $i$ in a graph $G$ and $\cF_\ell$ the $\sigma$-algebra generated by the random variables $C_i(\G)$ for $i \leq \ell$. A key quantity to consider is the variance of $Z_{\G, \beta}$. By standard decomposition, we have
\begin{equation*}
    \Erw \brk{Z_{\G, \beta}^2} - \Erw \brk{Z_{\G, \beta}}^2 = \Erw \brk{\Erw \brk{Z_{\G, \beta} \mid \cF_\ell}^2 - \Erw \brk{Z_{\G, \beta}}^2} + \Erw \brk{\Erw \brk{Z_{\G, \beta}^2 \mid \cF_\ell} - \Erw \brk{Z_{\G, \beta} \mid \cF_\ell}^2}
\end{equation*}
for any $\ell \geq 1$.
Note that the first term of the r.h.s. describes the contribution to the variance by the fluctuations in the number of short cycles, while the second term accounts for the conditional variance given the number of short cycles. It turns out that as $\ell \to \infty$ after taking $n \to \infty$, the second summand vanishes. In other words, the entire variance of $Z_{\G, \beta}$ is due to fluctuations in the number of short cycles.

To show this property formally, we leverage a result by \cite{Janson_1995} that stipulates conditions under which one is able to describe the limiting distribution of $Z_{\G, \beta}$ (see \Thm~\ref{thm_janson} in the appendix). 
One ingredient is the distribution of short cycles in $\G$ and a planted model $\G^*$. In $\G^*$, we first select a spin configuration $\sigma$ uniformly at random and subsequently sample a graph $G$ with probability proportional to $\exp \bc{- \beta \cH_G(\sigma)}$. While the distribution of short cycles in $\G$ is well established, the distribution of short cycles in the planted model $\G^*$ is a key contribution of this paper. The second ingredient is a careful application of the method of moments. Unfortunately, standard results on the first and second moment on random regular graphs (see i.e. \cite{Coja_2020}), do not suffice in our case and we have to sharpen our pencils to yield an error term of order $O\bc{\exp(1/n)}$. While the need for this lower error term prolongs calculations, it also poses some challenges that we resolve by a careful application of the Laplace's method as suggested by \cite{Greenhill_2010} and spatial mixing arguments.

\subsection{Short cycles}

To get started, let us write
\begin{equation} \label{eq_def_delta_lambda}
    \delta_i = \bc{\frac{e^{-\beta}-1}{e^{-\beta}+1}}^i \qquad \text{and} \qquad \lambda_i = \frac{\bc{d-1}^i}{2i}.
\end{equation}
The first item on the agenda is to derive the distribution of short cycles in $\G$. This is a well-established result.

\begin{fact} [Theorem 9.5 in \cite{Janson_2011}] \label{fact_cycle_null}
Let $\Lambda_i \sim \Po(\lambda_i)$ be a sequence of independent Poisson random variables for $i \geq 3$. Then jointly for all $i$ we have $C_{i}(\G) \xrightarrow{d} \Lambda_i$ as $n \to \infty$.
\end{fact}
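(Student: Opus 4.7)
The plan is to establish Fact~\ref{fact_cycle_null} via the method of moments in the configuration (pairing) model $\cP(n,d)$, leveraging the fact that for constant $d$ the uniform random regular graph $\Gnd$ and the configuration model conditioned on yielding a simple graph are contiguous. Consequently, any joint convergence in distribution of short cycle counts transfers between the two, and all counting can be carried out in the algebraically friendlier pairing model.

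For the first moment I would count directly in $\cP(n,d)$. A cycle of length $i \geq 3$ is encoded by an unordered cyclic sequence of $i$ distinct vertices together with a choice of two half-edges (an ``in'' and an ``out'' stub) at each vertex. The number of ordered vertex sequences is $n(n-1)\cdots(n-i+1)$, each of the $i$ vertices offers $d(d-1)$ ordered stub-pairs, and dividing by the $2i$ dihedral symmetries and multiplying by the probability $\prod_{j=0}^{i-1}(dn-1-2j)^{-1}$ that the prescribed $i$ pairs are all realised in the uniform random perfect matching on $dn$ half-edges yields
\begin{equation*}
\Erw\bigl[C_i(\G)\bigr] \;\xrightarrow{n \to \infty}\; \frac{(d-1)^i}{2i} \;=\; \lambda_i.
\end{equation*}

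For joint convergence I would compute the mixed factorial moments. Fixing $\ell \geq 3$ and non-negative integers $k_3,\ldots,k_\ell$, the quantity
\begin{equation*}
\Erw\biggl[\prod_{i=3}^{\ell}\bigl(C_i(\G)\bigr)_{k_i}\biggr]
\end{equation*}
enumerates ordered tuples of pairwise distinct labelled cycles of the prescribed lengths. I would split the sum according to whether the chosen cycles are vertex-disjoint: the vertex-disjoint contribution factorises across cycles and, by essentially the same count as above, converges to $\prod_i \lambda_i^{k_i}$. Tuples in which two cycles share a vertex or an edge involve strictly fewer than $\sum_i i k_i$ distinct vertices, so they lose at least one factor of $n$ in the vertex count while saving only a constant in the stub and matching probability factors; summed over the $O(1)$ many overlap types with bounded cycle lengths, these contributions are $O(1/n)$ and vanish.

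Since the descending factorial satisfies $\Erw[(X)_k]=\lambda^k$ for $X \sim \Po(\lambda)$, the limits match the joint factorial moments of independent Poissons $(\Lambda_i)_{i \geq 3}$. Because Poisson distributions (and their finite products) are determined by their moments, the method of moments for integer-valued random variables delivers joint convergence $(C_3(\G),\ldots,C_\ell(\G)) \xrightarrow{d} (\Lambda_3,\ldots,\Lambda_\ell)$ for every fixed $\ell$, which by definition of joint convergence of a sequence yields the conclusion for all $i$. The main obstacle is the overlap bookkeeping in the factorial moment step: classifying all ways two short cycles can intersect (shared vertices, shared edges, nested paths) and verifying that each such overlap type costs at least one power of $n$. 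For fixed $\ell$ this is a finite case analysis executed via standard subgraph counting in $\cP(n,d)$, and the sparsity of $\G$ together with the bounded cycle length $\ell$ makes every non-disjoint configuration genuinely lower-order.
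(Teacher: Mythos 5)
The paper does not prove this statement at all: it is quoted verbatim as Theorem~9.5 of Janson--\L uczak--Ruci\'nski and used as a black box, so there is no internal proof to compare against. Your method-of-moments argument in the pairing model is precisely the standard proof of that theorem, and it also mirrors the technique the paper itself uses for the planted analogue (\Prop~\ref{prop_cycle_planted} via \Lem~\ref{lemma_cycle_planted_optimal_mu}): first moment by direct stub counting giving $(d-1)^i/(2i)$, joint factorial moments by splitting into vertex-disjoint tuples (which factorise) and overlapping tuples (which lose a power of $n$), then the method of moments for Poisson limits. All of that is sound.

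The one step that needs repair is the transfer from the pairing model $\Gbold$ to $\Gnd$. Contiguity does \emph{not} transfer convergence in distribution (two mutually contiguous sequences can have different distributional limits for the same statistic; contiguity only preserves which events have vanishing probability), so the sentence ``any joint convergence in distribution of short cycle counts transfers between the two'' is not justified as written. The correct route, which is also how the cited source proceeds, is to include $i=1$ (loops) and $i=2$ (double edges) in your joint factorial-moment computation, concluding $(C_1(\Gbold),\dots,C_\ell(\Gbold))\xrightarrow{d}(\Lambda_1,\dots,\Lambda_\ell)$ with independent $\Lambda_i\sim\Po(\lambda_i)$. Since $\Gbold$ conditioned on $\cbc{C_1=C_2=0}$ is \emph{exactly} distributed as $\Gnd$ and $\Pr\brk{C_1=C_2=0}\to e^{-\lambda_1-\lambda_2}>0$, the conditional law of $(C_3,\dots,C_\ell)$ converges to the law of $(\Lambda_3,\dots,\Lambda_\ell)$ conditioned on $\Lambda_1=\Lambda_2=0$, which by independence of the limit equals the unconditional law. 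With that substitution your argument is complete.
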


Deriving the distribution of short cycles in the planted model $\G^*$ informally introduced above requires some more work. Let us start with the definitions. Given $\sigma \in \cbc{\pm 1}^V$ and for any $\beta > 0$, let us define the distribution of $\G^*(\sigma)$ for any event $\cA$ as
\begin{equation} \label{eq_g_star}
    \Pr \brk{\G^*(\sigma) \in \cA} \propto \Erw \brk{\exp\bc{-\beta \cH_\G(\sigma)} \vecone \cbc{\G \in \cA}}.
\end{equation}
This definition gives rise to the following experiment. First, draw a spin configuration $\SIGMA^*$ uniformly at random among all configurations $\cbc{\pm 1}^V$. In the next step, draw $\G^*=\G^*(\SIGMA^*)$ according to \eqref{eq_g_star}. Hereafter, $\G^*$ will be denoted the planted model.

\begin{proposition} \label{prop_cycle_planted}
Let 
\begin{equation*}
    \Xi_i \sim \Po \bc{\lambda_i \bc{1 + \delta_i}}
\end{equation*}
be a sequence of independent Poisson random variables for $i \geq 3$. Then jointly for all $i$ we have $C_{i}(\G^*) \xrightarrow{d} \Xi_i$ as $n \to \infty$.
\end{proposition}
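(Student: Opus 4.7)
The plan is to establish joint Poisson convergence by the method of moments. By a standard criterion (see e.g.\ Theorem~6.10 of \cite{Janson_2011}), it suffices to show, for every $L \geq 3$ and every $(k_3,\ldots,k_L) \in \ZZ_{\geq 0}^{L-2}$, that
\begin{equation*}
    \Erw\brk{\prod_{i=3}^L (C_i(\G^*))_{k_i}} \longrightarrow \prod_{i=3}^L \bc{\lambda_i(1+\delta_i)}^{k_i}, \qquad (x)_k := x(x-1)\cdots(x-k+1).
\end{equation*}
The starting point would be the tilt identity
\begin{equation*}
    \Pr\brk{\G^* = G} = \frac{\Pr\brk{\G = G}\, Z_{G,\beta}}{\Erw\brk{Z_{\G,\beta}}},
\end{equation*}
which follows immediately from \eqref{eq_g_star} and the uniformity of $\SIGMA^*$, and which converts the target into a tilted factorial moment in the null model:
\begin{equation*}
    \Erw\brk{\prod_{i=3}^L (C_i(\G^*))_{k_i}} = \frac{\Erw\brk{\prod_{i=3}^L (C_i(\G))_{k_i}\, Z_{\G,\beta}}}{\Erw\brk{Z_{\G,\beta}}}.
\end{equation*}

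To evaluate this ratio I would expand each $(C_i(\G))_{k_i}$ as an ordered sum over $k_i$-tuples of distinct $i$-cycles. Standard configuration-model estimates (as used to prove \Fac~\ref{fact_cycle_null}) show that tuples whose cycles share at least one vertex contribute $o(1)$, so only vertex-disjoint tuples $\vec C = (C_1,\ldots,C_K)$ with lengths $(i_1,\ldots,i_K)$ need to be retained. For such $\vec C$ the Hamiltonian splits as $\cH_\G(\sigma) = \sum_{j=1}^K \cH_{C_j}(\sigma) + \cH_{\G \setminus \vec C}(\sigma)$, and summing the cycle part over $\sigma \in \{\pm 1\}^V$ is a transfer-matrix computation: writing $T$ for the $2 \times 2$ symmetric matrix with diagonal entries $e^{-\beta}$ and off-diagonal entries $1$, whose eigenvalues are $e^{-\beta}\pm 1$, one finds
\begin{equation*}
    \sum_{\sigma \in \{\pm1\}^V} \prod_{j=1}^K e^{-\beta \cH_{C_j}(\sigma)} = 2^{\,n - \sum_j i_j}\, \prod_{j=1}^K \Tr\bc{T^{i_j}}, \qquad \Tr\bc{T^i} = (e^{-\beta}+1)^i + (e^{-\beta}-1)^i.
\end{equation*}
The bulk part I would handle by a leading-order expectation computation in the configuration model, which should give, for each $\sigma$ whose fraction of $+1$'s lies within $n^{-1/3}$ of $\tfrac12$,
\begin{equation*}
    \Erw\brk{e^{-\beta \cH_{\G \setminus \vec C}(\sigma)} \mid \vec C \subset \G} = (1+o(1))\,\bc{\tfrac{1+e^{-\beta}}{2}}^{dn/2 - \sum_j i_j},
\end{equation*}
with imbalanced $\sigma$ contributing negligibly via the same large-deviation bounds that control $\Erw\brk{Z_{\G,\beta}}$ itself.

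Dividing by $\Erw\brk{Z_{\G,\beta}} \sim 2^n \bc{(1+e^{-\beta})/2}^{dn/2}$ then collapses everything into the per-cycle tilt factor
\begin{equation*}
    \frac{\Tr\bc{T^i}}{(1+e^{-\beta})^i} = 1 + \bc{\frac{e^{-\beta}-1}{e^{-\beta}+1}}^i = 1+\delta_i,
\end{equation*}
and combining with the null-model factorial moment $\Erw\brk{\prod_i (C_i(\G))_{k_i}} \to \prod_i \lambda_i^{k_i}$ from \Fac~\ref{fact_cycle_null} yields the required limit $\prod_i (\lambda_i(1+\delta_i))^{k_i}$; the Poisson method of moments then delivers the joint convergence. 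The hard part of this plan is the bulk conditional-expectation estimate: one must show that pinning the cycle edges in $\vec C$ leaves the remaining Boltzmann sum essentially unchanged, up to precisely the prescribed local correction $\bc{(1+e^{-\beta})/2}^{-\sum_j i_j}$ and uniformly in the spin values on the cycle vertices. This insensitivity is exactly spatial mixing in the replica symmetric regime, which is why the assumption $\beta < \bks$ is essential; once it is in place the transfer-matrix algebra and the null-model cycle count combine routinely to finish the proof.
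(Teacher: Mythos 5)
Your route---tilt the null model by $Z_{\G,\beta}$, evaluate the cycle part by a transfer matrix, treat the bulk by a conditional first-moment estimate, and finish with the Poisson method of moments---is genuinely different from the paper's. The paper stays inside the planted model throughout: it first shows (\Lem~\ref{A_mu_high_prob_event}) that the edge-type profile $\mu'$ of $\G^*$ concentrates around $\hat\mu$, then uses the fact that conditionally on the profile $\G^*$ is \emph{uniform} over configurations with that profile, and computes the product $p_{l,M}\,a_{l,M}$ of a placement probability and a cycle count explicitly, summing over the (even) number $M$ of bichromatic cycle edges to recover $\lambda_l(1+\delta_l)$ via the binomial theorem. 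Your transfer-matrix algebra is correct, and the final bookkeeping $\Tr\bc{T^i}/(1+e^{-\beta})^i=1+\delta_i$ does match $\delta_i$ as defined in \eqref{eq_def_delta_lambda}; if your plan were carried out it would be closer in spirit to the \cite{Mossel_2011} argument the paper cites as its starting point.

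There is, however, a genuine gap at your very first step: the ``tilt identity'' $\Pr\brk{\G^*=G}=\Pr\brk{\G=G}Z_{G,\beta}/\Erw\brk{Z_{\G,\beta}}$ is false. By \eqref{eq_g_star} each conditional law $\G^*(\sigma)$ carries its own normalizer $\Erw\brk{\exp(-\beta\cH_\G(\sigma))}$, and this depends on $\sigma$ through its balance (for $\sigma\equiv+1$ it equals $e^{-\beta dn/2}$, exponentially smaller than for balanced $\sigma$), so averaging over the \emph{uniform} $\SIGMA^*$ does not produce the single global normalizer $\Erw\brk{Z_{\G,\beta}}$. The measure you have written down is $\hat\G$ from \eqref{eq_g_hat}, not $\G^*$; by the Nishimori property (\Fac~\ref{fact_nishimori}) the two differ exactly in that $\hat\G$ mixes the planted conditionals over $\hat\SIGMA$ rather than over the uniform distribution. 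Your computation therefore yields the limiting cycle distribution of $\hat\G$, and transferring it to $\G^*$ requires the extra bridging argument (which the paper performs separately in the proof of \Thm~\ref{thm_dist}, using that $\hat\SIGMA$ is $O(n^{2/3})$-balanced \whp) that the two mixtures agree asymptotically on local statistics. Two further points: your bulk estimate cannot hold with a bare $(1+o(1))\bc{(1+e^{-\beta})/2}^{dn/2-\sum_j i_j}$, since $\Erw\brk{e^{-\beta\cH_\G(\sigma)}}$ carries a $\Theta(1)$ Laplace-method prefactor relative to $((1+e^{-\beta})/2)^{dn/2}$; what you need is the ratio form $\Erw\brk{e^{-\beta\cH_{\G\setminus\vec C}(\sigma)}\mid \vec C\subset\G}=(1+o(1))\bc{(1+e^{-\beta})/2}^{-\sum_j i_j}\Erw\brk{e^{-\beta\cH_\G(\sigma)}}$, uniformly over balanced $\sigma$ and over the spins on the cycle vertices, so that the prefactor cancels. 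Proving that uniformity in the $d$-regular pairing model---where pinning the cycle perturbs the residual degree sequence split by spin---is precisely the content of the paper's $p_{l,M}$ computation, so you have located the crux correctly but left it open.
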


Establishing the distribution of short cycles in $\G^*$ is one of the main contributions of this paper. To this end, we start off with similar arguments as used in \cite{Mossel_2011}, but need to diligently account for the subtle dependencies introduced by the regularities in $\G^*$.

Applying \Fac~\ref{fact_cycle_null} and \Prop~\ref{prop_cycle_planted} to \Thm~1 in \cite{Janson_1995} requires a slight detour via the Nishimori property. To this end, note that the random graph $\G$ induces a reweighted graph distribution $\hat \G$ which for any event $\cA$ is defined by
\begin{equation} \label{eq_g_hat}
    \Pr \brk{\hat \G \in \cA} \propto \Erw \brk{Z_{\G,\beta} \vecone \cbc{\G \in \cA}}.
\end{equation}
Moreover, consider the distribution $\hat \SIGMA$ on spin configurations defined by
\begin{equation} \label{eq_sigma_hat}
    \Pr \brk{\hat \SIGMA = \sigma} \propto \Erw \brk{\exp\bc{-\beta \cH_{\G}(\sigma)}}
\end{equation}
for any $\beta > 0$.
$\hat \G, \G^*, \hat \SIGMA, \SIGMA^*$, and the Boltzmann distribution from \eqref{eq_boltzmann} are connected via the well-known Nishimori property.
\begin{fact} [Proposition 3.2 in \cite{Coja_2021}]\label{fact_nishimori}
For any graph $G$ and spin configuration $\sigma \in \cbc{\pm 1}^V$ we have
\begin{equation*}
    \Pr \brk{\hat \G = G} \mu_{G}(\sigma) = \Pr \bc{\hat \SIGMA = \sigma} \Pr \bc{\G^* = G \mid \SIGMA^* = \sigma}.
\end{equation*}
\end{fact}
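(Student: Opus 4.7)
\textbf{Proof plan for \Fac~\ref{fact_nishimori}.} The claim is a pure bookkeeping identity: both sides of the asserted equality are really the joint probability, under a single ``tilted'' pair distribution on $(G,\sigma)$, of observing the particular pair $(G,\sigma)$. The plan is therefore to unfold the four definitions \eqref{eq_boltzmann}, \eqref{eq_g_star}, \eqref{eq_g_hat}, \eqref{eq_sigma_hat}, identify the correct normalising constants on each side, and check that the two expressions coincide.

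First, I would compute the left-hand side. By the definition of $\hat{\G}$ in \eqref{eq_g_hat}, the proportionality is in fact an equality after dividing by the total mass, so $\Pr[\hat{\G}=G] = \Pr[\G=G]\,Z_{G,\beta}/\Erw[Z_{\G,\beta}]$. Multiplying by $\mu_G(\sigma) = \exp(-\beta\cH_G(\sigma))/Z_{G,\beta}$ from \eqref{eq_boltzmann}, the partition function $Z_{G,\beta}$ cancels and one is left with
\begin{equation*}
\Pr[\hat{\G}=G]\,\mu_G(\sigma) \;=\; \frac{\Pr[\G=G]\,\exp(-\beta\cH_G(\sigma))}{\Erw[Z_{\G,\beta}]}.
\end{equation*}

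Next, I would compute the right-hand side. Since $\SIGMA^*$ is uniform on $\{\pm 1\}^V$, \eqref{eq_g_star} gives $\Pr[\G^*=G\mid\SIGMA^*=\sigma] = \Pr[\G=G]\,\exp(-\beta\cH_G(\sigma))/\Erw[\exp(-\beta\cH_\G(\sigma))]$ after fixing the normaliser. Similarly, \eqref{eq_sigma_hat} gives $\Pr[\hat\SIGMA=\sigma] = \Erw[\exp(-\beta\cH_\G(\sigma))]/\sum_\tau \Erw[\exp(-\beta\cH_\G(\tau))]$. Multiplying these two expressions, the $\sigma$-dependent normaliser $\Erw[\exp(-\beta\cH_\G(\sigma))]$ again cancels, yielding
\begin{equation*}
\Pr[\hat\SIGMA=\sigma]\,\Pr[\G^*=G\mid\SIGMA^*=\sigma] \;=\; \frac{\Pr[\G=G]\,\exp(-\beta\cH_G(\sigma))}{\sum_\tau \Erw[\exp(-\beta\cH_\G(\tau))]}.
\end{equation*}

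Finally, I would match the two denominators. The only nontrivial (but routine) step is interchanging sum and expectation via Fubini/Tonelli, valid because all quantities are nonnegative and the sum over $\tau\in\{\pm 1\}^V$ is finite: $\sum_\tau \Erw[\exp(-\beta\cH_\G(\tau))] = \Erw[\sum_\tau \exp(-\beta\cH_\G(\tau))] = \Erw[Z_{\G,\beta}]$. Hence both sides equal $\Pr[\G=G]\exp(-\beta\cH_G(\sigma))/\Erw[Z_{\G,\beta}]$, proving the identity.

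There is no serious obstacle: the whole content is that the normalising constants conspire so that $Z_{G,\beta}$ cancels on the left while $\Erw[\exp(-\beta\cH_\G(\sigma))]$ cancels on the right, and then Fubini identifies the two remaining normalisers. The only point requiring minor care is to keep the distinction between $\Pr[\G=G]$ (the law of the uniform random $d$-regular graph) and $\Pr[\hat{\G}=G]$ (its reweighting by $Z_{G,\beta}$), so that no normaliser is double-counted.
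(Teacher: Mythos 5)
Your verification is correct: unfolding the four definitions, the factor $Z_{G,\beta}$ cancels on the left, the factor $\Erw[\exp(-\beta\cH_{\G}(\sigma))]$ cancels on the right, and the two remaining normalisers agree because $\sum_\tau \Erw[\exp(-\beta\cH_\G(\tau))] = \Erw[Z_{\G,\beta}]$ (a finite sum, so no Fubini subtlety). The paper itself gives no proof --- it imports the identity as Proposition 3.2 of the cited reference --- and your direct computation is exactly the standard argument behind that result, so there is nothing to add beyond noting that all four distributions must be understood relative to the same base law $\Pr[\G=G]$ of the random $d$-regular graph, which you do.
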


\subsection{The first and second moment}

The second key ingredient towards the proof of \Thm~\ref{thm_dist} is the method of moments. As standard random regular graph results are too crude, we need a more precise calculation. Fortunately, with some patience and equipped with Laplace's method as stated in \cite{Greenhill_2010}, the first moment is not too hard to find.

\begin{proposition}\label{prop_first_moment}
Assume that $0 < \beta < \bks$ and $d \geq 3$. Then we have
\begin{equation*}
	\Erw \brk{\Zgb } =  \exp \bc{- \lambda_1 \delta_1 - \lambda_2 \delta_2 + O \bc{\frac{1}{n}}}  \sqrt{\frac{1 + e^{ \beta}}{ 2 + d  e^{\beta} - d }}  \exp \bc{n \bc{\bc{1 - d/2} \log \bc{2}  + d\log\bc{1 + e^{- \beta}}/2}}
\end{equation*}
\end{proposition}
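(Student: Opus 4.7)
The plan is to transition through the configuration/pairing model $\hat\G$: since $\G$ is distributed as $\hat\G$ conditioned on being simple,
$$\Erw\brk{Z_{\G,\beta}} = \frac{\Erw\brk{Z_{\hat\G,\beta}\vecone\cbc{\hat\G\text{ is simple}}}}{\Pr\brk{\hat\G\text{ is simple}}}.$$
The proof thus splits into a sharp asymptotic evaluation of $\Erw\brk{Z_{\hat\G,\beta}}$ and the computation of the correction factor produced by conditioning on simplicity. The Kesten--Stigum constraint $\beta<\bks$ plays no role in the second piece but is essential in the first.

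\textbf{Pairing-model moment via Laplace.}
By vertex symmetry of $\hat\G$ and the identity $\cH_{\hat\G}(\sigma)=dn/2-J$ with $J$ the number of bichromatic edges,
$$\Erw\brk{Z_{\hat\G,\beta}} = e^{-\beta dn/2}\sum_{k=0}^{n}\binom{n}{k}\sum_{j\ge 0}\Pr\brk{J_k=j}\,e^{\beta j},$$
where $\Pr\brk{J_k=j}$ is the explicit pairing ratio $\binom{dk}{j}\binom{d(n-k)}{j}j!(dk-j-1)!!(d(n-k)-j-1)!!/(dn-1)!!$. I would replace every factorial by its Stirling expansion while retaining the next-to-leading correction, so as to rewrite the summand as $\exp\bc{n\Phi(k/n,j/n)}$ times an explicit polynomial prefactor, with a multiplicative error of $1+O(1/n)$. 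A short computation shows that $\Phi$ has a unique interior critical point at $(x^\ast,y^\ast)=\bc{1/2, y_\beta}$ for an explicit $y_\beta$, and the Hessian there is negative definite \emph{precisely} because $\beta<\bks$; the Kesten--Stigum condition enters as the strict negativity of the second variation in the overlap direction. The quantitative Laplace method of \cite{Greenhill_2010} applied to the $(k,j)$-sum then yields
$$\Erw\brk{Z_{\hat\G,\beta}} = \bc{1+O(1/n)}\sqrt{\tfrac{1+e^{\beta}}{2+de^{\beta}-d}}\exp\bc{n\bc{\bc{1-d/2}\log 2 + \tfrac{d}{2}\log\bc{1+e^{-\beta}}}}.$$

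\textbf{Simplicity correction.}
The $i=1,2$ extension of \Fac~\ref{fact_cycle_null} gives $C_1(\hat\G)\to\Po(\lambda_1)$ and $C_2(\hat\G)\to\Po(\lambda_2)$ jointly, whence $\Pr\brk{\hat\G\text{ is simple}}\to\exp\bc{-\lambda_1-\lambda_2}$. Under the tilted law on $\hat\G$ with density proportional to $Z_{\hat\G,\beta}$, the small-subgraph moment computation that underlies \Prop~\ref{prop_cycle_planted}, specialised to $i\in\cbc{1,2}$, produces the limits $\Po\bc{\lambda_1(1+\delta_1)}$ and $\Po\bc{\lambda_2(1+\delta_2)}$. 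Hence the tilted probability of simplicity converges to $\exp\bc{-\lambda_1(1+\delta_1)-\lambda_2(1+\delta_2)}$, and taking the ratio
$$\frac{\Erw\brk{Z_{\hat\G,\beta}\vecone\cbc{\hat\G\text{ simple}}}}{\Erw\brk{Z_{\hat\G,\beta}}\cdot\Pr\brk{\hat\G\text{ simple}}} = \exp\bc{-\lambda_1\delta_1-\lambda_2\delta_2}\bc{1+O(1/n)}.$$
Multiplying with the previous estimate delivers the proposition.

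\textbf{Main obstacle.}
The technical heart of the argument is maintaining a \emph{multiplicative} $1+O(1/n)$ accuracy throughout. Standard Laplace yields only $1+o(1)$, so one has to retain the next-order term of the saddle-point expansion, split the $(k,j)$-sum into a local window of radius $O\bc{\sqrt{n\log n}}$ around the maximum (handled by a Gaussian integral approximation) and a tail where a crude exponential bound suffices, and propagate the cubic Stirling remainders through every step. Likewise, the simplicity correction has to be sharpened to the same $O(1/n)$ precision, which demands quantitative (rather than merely weak) Poisson convergence for loops and double edges in both the null and the reweighted models---a moment computation parallel in spirit to the one used for \Prop~\ref{prop_cycle_planted}, but at finite rather than only leading order.
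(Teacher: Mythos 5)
Your proposal follows essentially the same route as the paper: a sharp evaluation of the pairing-model first moment by the discrete Laplace method of \cite{Greenhill_2010} over the two-dimensional sum indexed by the number of plus vertices and the number of bichromatic edges (equivalent to the paper's $(\rho_+,\mu_{++})$ parametrization), followed by a simplicity correction obtained as the ratio of the simplicity probabilities under the $Z$-tilted (planted) and null pairing models, which is exactly the paper's factor $\Pr\brk{\Gpone\text{ simple}}/\Pr\brk{\Gbold\text{ simple}}$ yielding $\exp(-\lambda_1\delta_1-\lambda_2\delta_2)$. One correction: your claim that the Hessian at the saddle point is negative definite \emph{precisely because} $\beta<\bks$ is inaccurate for the first moment --- by \Lem~\ref{hessian_first_moment} the determinant equals $4d(1+e^{-\beta})^2e^{\beta}(2+d(e^{\beta}-1))>0$ with negative leading principal minor for \emph{every} $\beta>0$, and a one-replica computation has no overlap direction; the Kesten--Stigum condition is only needed for the second moment, so while this does not invalidate your argument on the stated range, the attribution is misleading.
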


The second moment is not as amenable. The key challenge for applying Laplace's method is to exhibit that the obvious choice of the optimum is indeed a global maximum. We resolve this issue by resorting to results on the broadcasting process on an infinite $d$-regular tree and the disassortative stochastic block model. This spatial mixing argument allows us to focus our attention on an area close to the anticipated optimum. To this end, let us exhibit an event $\cO$ that is concerned with the location of two typical samples $\SIGMA_\G, \SIGMA'_\G$ from the Boltzmann distribution $\mu_{\G, \beta}$, i.e.
\begin{equation} \label{eq_def_O}
    \cO = \cbc{\Erw \brk{\vert \SIGMA_\G \cdot \SIGMA'_\G  \vert \mid \G} < \eps_n n}
\end{equation}
for a sequence of $\eps_n = o(1)$. Then we can leverage the following result from \cite{Coja_2020}.

\begin{lemma}[Lemma 4.7 in \cite{Coja_2020}] \label{lem_o}
For the event $\cO$ defined in \eqref{eq_def_O} we have for $d \geq 3, 0<\beta < \bks$
\begin{equation*}
    \Erw \brk{Z_{\G, \beta} \evO} = (1 - o(1))\Erw \brk{Z_{\G, \beta}}.
\end{equation*}
\end{lemma}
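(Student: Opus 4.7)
\textbf{The plan} is to pass from the unbiased to the planted measure via the Nishimori identity and then invoke spatial mixing below the Kesten--Stigum threshold. By the definition of $\hat\G$ in \eqref{eq_g_hat} we have $\Erw\brk{Z_{\G,\beta}\evO}/\Erw\brk{Z_{\G,\beta}}=\Pr\brk{\hat\G\in\cO}$, so it suffices to show $\Pr\brk{\hat\G\in\cO}\to 1$. Markov's inequality applied under $\hat\G$ to the nonnegative random variable $\Erw\brk{\abs{\SIGMA_{\hat\G}\cdot\SIGMA'_{\hat\G}}\mid\hat\G}$ reduces the claim to proving that $\Erw\brk{\abs{\SIGMA_1\cdot\SIGMA_2}}=o(n)$ for two independent Boltzmann draws $\SIGMA_1,\SIGMA_2\sim\mu_{\hat\G,\beta}$, with the sequence $\eps_n\to 0$ chosen slowly enough to absorb the slack.

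Applying Fact~\ref{fact_nishimori} twice, I couple $(\hat\G,\SIGMA_1,\SIGMA_2)$ with $(\G^*(\SIGMA^*),\SIGMA^*,\SIGMA)$, where $\SIGMA^*$ follows the reweighted prior $\hat\SIGMA$ (which to leading order is uniform on balanced configurations) and $\SIGMA\sim\mu_{\G^*,\beta}$ is independent of $\SIGMA^*$ given $\G^*$. The question is thus whether the planted spin vector and a fresh Boltzmann sample are essentially orthogonal. Since both models are $\pm$-symmetric, the linear term vanishes and I pass to the second moment: by Cauchy--Schwarz, $\Erw\brk{\abs{\SIGMA^*\cdot\SIGMA}}^2 \le n\,\Erw\brk{(\SIGMA^*\cdot\SIGMA)^2}$, and the Nishimori symmetry $\Erw\brk{\SIGMA^*_v\SIGMA^*_w\mid\G^*}=\bck{\SIGMA_v\SIGMA_w}_{\mu_{\G^*,\beta}}$ rewrites the right-hand side as $\Erw_{\G^*}\brk{\sum_{v,w\in V}\bck{\SIGMA_v\SIGMA_w}_{\mu_{\G^*,\beta}}^2}$. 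So it suffices to bound the expected sum of squared Boltzmann two-point correlations on $\G^*$ by $o(n^2)$.

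Here lies the main obstacle. The random $d$-regular graph, and hence $\G^*$ (which differs only by a bounded local reweighting), is locally tree-like: a radius-$r$ neighbourhood of a uniformly random vertex converges in total variation to the $d$-regular tree $T_d$. On $T_d$, the symmetric two-state Ising broadcast at inverse temperature $\beta$ is non-reconstructible precisely when $\beta<\bks$, and non-reconstruction is equivalent to exponential decay of $\bck{\sigma_v\sigma_w}^2$ in $\dist(v,w)$. Transferring this decay to $\G^*$ via local weak convergence, together with the trivial bound $\bck{\SIGMA_v\SIGMA_w}^2\le 1$ at distances beyond $r$, yields an estimate of the form $n\,d^{O(r)}+n^2\exp(-\Omega(r))$ for the expected sum of squared correlations; a slowly diverging $r$ then delivers $o(n^2)$.

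The delicate point is the rigorous passage from $T_d$ to the finite, reweighted graph $\G^*$, since the tree non-reconstruction controls correlations only for pairs whose local neighbourhood is genuinely tree-like. The cleanest route, and the one hinted at in the introduction, is to identify $\G^*$ with a disassortative symmetric stochastic block model whose information-theoretic detection threshold is known to coincide with $\bks$; the corresponding non-detection statement below KS delivers $\Erw\brk{\abs{\SIGMA^*\cdot\SIGMA}}=o(n)$ directly, circumventing pair-by-pair control of correlations. Combined with the Nishimori and Markov reductions above, this completes the proof.
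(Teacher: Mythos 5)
First, a point of comparison: the paper does not prove this statement at all. It is imported verbatim as Lemma~4.7 of \cite{Coja_2020} and used as a black box, so there is no in-paper argument to measure your proposal against. What you have written is in effect a reconstruction of the proof in the cited source, and your reduction chain --- the exact identity $\Erw\brk{Z_{\G,\beta}\evO}/\Erw\brk{Z_{\G,\beta}}=\Pr\brk{\hat\G\in\cO}$ from \eqref{eq_g_hat}, Markov's inequality with $\eps_n$ chosen to decay slowly, the Nishimori exchange (Fact~\ref{fact_nishimori}) of one Boltzmann sample for the planted configuration, and the passage to squared two-point functions --- is indeed the strategy that source follows.

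Two concrete issues remain. First, your Cauchy--Schwarz step is stated as $\Erw\brk{\abs{\SIGMA^*\cdot\SIGMA}}^2\le n\,\Erw\brk{(\SIGMA^*\cdot\SIGMA)^2}$; with the spurious factor $n$, the target bound $\Erw\brk{(\SIGMA^*\cdot\SIGMA)^2}=o(n^2)$ only yields $\Erw\brk{\abs{\SIGMA^*\cdot\SIGMA}}=o(n^{3/2})$, which is useless here. What you need is the plain bound $\Erw\brk{\abs{X}}^2\le\Erw\brk{X^2}$; with that the reduction goes through (and, incidentally, the Nishimori detour is not even needed for this part, since $\Erw\brk{(\SIGMA_1\cdot\SIGMA_2)^2\mid\G}=\sum_{v,w}\bck{\SIGMA_v\SIGMA_w}_{\mu_{\G,\beta}}^2$ holds for two independent Boltzmann samples on any graph). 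Second, and more seriously, the entire analytic content of the lemma sits in your final paragraph, where it is asserted rather than proved: the transfer of tree non-reconstruction to the finite, reweighted graph, equivalently the non-detection statement for the disassortative regular block model below $\bks$. Your intermediate estimate $n\,d^{O(r)}+n^2\exp(-\Omega(r))$ presupposes a uniform exponential decay of $\bck{\SIGMA_v\SIGMA_w}^2$ on $\G^*$ that does not follow from local weak convergence alone; moreover the individual two-point function on the tree decays like $\abs{\delta_1}^{\dist(v,w)}$ for \emph{every} $\beta>0$, and the Kesten--Stigum condition $(d-1)\delta_1^2<1$ concerns the sum over the roughly $(d-1)^{\ell}$ vertices at distance $\ell$, not the decay of a single correlation, so ``non-reconstruction is equivalent to exponential decay of $\bck{\sigma_v\sigma_w}^2$'' is not the right dichotomy. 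As it stands, your argument is a correct reduction of the lemma to the overlap/non-detection theorem for the planted model --- which is itself one of the main results of \cite{Coja_2020} --- rather than a self-contained proof; the gap sits exactly where the cited lemma has its substance.
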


Conditioning on $\cO$ greatly facilitates the calculation of the second moment.
\begin{proposition}\label{prop_second_moment}
For $0 < \beta < \bks$ and $d \geq 3$ we have
\begin{equation*}
	\Erw \brk{\Zgb^2 \evO} = \exp \bc{\lambda_1 + \lambda_2 -  \frac{4 \lambda_1}{\bc{1 + e^\beta}^2} - \frac{4 \lambda_2  \bc{1 + e^{2 \beta}}^2}{\bc{1 + e^\beta}^4}+O \bc{\frac{1}{n}}} \frac{ \bc{ 1 + e^{ \beta}}^2  \exp \bc{n \bc{\bc{ 2 - d}  \log \bc{2} + d \log \bc{1 + e^{- \beta}}}}}{\bc{d e^\beta - d + 2} \sqrt{2 e^{2 \beta} + 2 d e^\beta  - d e^{2 \beta} - d + 2}}
\end{equation*}
\end{proposition}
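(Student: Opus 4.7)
The plan is to carry out a refined second-moment calculation in the pairing (configuration) model, using \Lem~\ref{lem_o} to restrict the sum to pairs of spin configurations with vanishing overlap. First I would expand
\[
\Erw \brk{\Zgb^2 \evO} = \sum_{\sigma,\tau \in \cbc{\pm 1}^V} \Erw \brk{\exp \bc{-\beta \cH_\G(\sigma) - \beta \cH_\G(\tau)} \evO}
\]
and group the pairs $(\sigma,\tau)$ by their joint type, parametrised by the magnetisations $m = \abs{\cbc{v : \sigma_v = 1}}/n$ and $m' = \abs{\cbc{v : \tau_v = 1}}/n$ together with the overlap $\rho = (\sigma \cdot \tau)/n$. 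For each type the inner expectation is evaluated in the pairing model by enumerating the compatible matchings of half-edges into the four spin-pair classes, weighted by the corresponding edge factors from $\cbc{1, e^{-\beta}, e^{-2\beta}}$; this organises the whole sum as a discrete integral of the shape $\exp(n \Phi(m,m',\rho) + o(n))$.

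The sum-to-integral passage is then executed via Laplace's method in the sharp form of \cite{Greenhill_2010}. Inside the region cut out by $\cO$, the only critical point of $\Phi$ is $(m,m',\rho) = (1/2,1/2,0)$, corresponding to two independent uniform configurations with vanishing overlap. \Lem~\ref{lem_o} is decisive here: it excises the rival maximum at $\abs{\rho} = 1$ (perfect alignment or anti-alignment), which for $\beta$ close to $\bks$ would otherwise contribute on the same exponential scale. On the remaining domain a spatial-mixing and convexity argument of the kind underlying \Lem~\ref{lem_o} in \cite{Coja_2020} certifies $(1/2,1/2,0)$ as the unique global maximum, and an exact second-order expansion at that point produces the exponential prefactor $\exp\bc{n\bc{(2-d)\log 2 + d\log(1+e^{-\beta})}}$ together with the Hessian denominator $(de^\beta - d + 2)\sqrt{2e^{2\beta} + 2de^\beta - de^{2\beta} - d + 2}$.

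The remaining step is the small-cycle correction from passing between the pairing model and a simple random graph. Under the reweighting $\exp(-\beta(\cH_\G(\sigma) + \cH_\G(\tau)))$ with $\sigma,\tau$ balanced and orthogonal, the numbers of 1- and 2-cycles remain asymptotically Poisson but with means rescaled by the average edge weight over the four $(\sigma_v,\tau_v)$ combinations: a direct computation shows that each self-loop carries relative weight $4/(1+e^\beta)^2$ and each 2-cycle carries $4(1+e^{2\beta})^2/(1+e^\beta)^4$. Combined with the unconditioning factor $e^{\lambda_1 + \lambda_2}$ needed to convert the simple-graph expectation into a pairing-model expectation, this yields precisely the short-cycle factor in the proposition.

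The main obstacle will be sharpening the usual $o(1)$ Laplace-method error down to the claimed $O(1/n)$. This requires an \emph{exact} Hessian computation at $(1/2,1/2,0)$, a quantitative Gaussian-tail bound on a shrinking neighbourhood of the maximum inside $\cO$, and a Stirling expansion of the multinomials enumerating the joint types that keeps track of all $1/n$-corrections. Each of these ingredients is routine in isolation; it is the simultaneous book-keeping, combined with the clean extraction of the short-cycle correction, that will constitute the bulk of the technical work.
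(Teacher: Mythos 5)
Your proposal follows essentially the same route as the paper: a refined second-moment computation in the pairing model, organised as a sum over joint spin-pair types and evaluated by the discrete Laplace method of \cite{Greenhill_2010} at the symmetric optimum, followed by the simple-graph correction via the Poisson counts of loops and double edges in the planted pairing model. Your per-loop weight $4/(1+e^\beta)^2$ and per-double-edge weight $4(1+e^{2\beta})^2/(1+e^\beta)^4$, combined with the unconditioning factor $e^{\lambda_1+\lambda_2}$, are exactly the factors the paper obtains.

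Two places where the sketch understates or slightly mislocates the work. First, the Laplace method cannot be run over the three parameters $(m,m',\rho)$ alone if one wants a multiplicative $\exp(O(1/n))$ error: the sum lives on the lattice of edge-type counts (nine free coordinates after symmetry reductions), so one needs the full $9\times 9$ Hessian, the lattice determinant, and an inner optimisation of the edge-type distribution subject to the vertex marginals (done in the paper by a Lagrangian computation) before the overlap parameter ever appears; collapsing to $(m,m',\rho)$ merely defers this to an inner sum that must itself be evaluated to the same precision. Second, the role of $\cO$ is not quite to excise a rival maximum at $|\rho|=1$: in the paper it licenses restricting the vertex marginals to the symmetric one-parameter family indexed by the overlap $\alpha$, and the absence of rival maxima on all of $\alpha\in(-1,1)$ is then a genuine analytic fact --- concavity of the reduced exponent at $\beta=\bks$ together with monotonicity of its second derivative in $\beta$ --- which is precisely where the hypothesis $\beta<\bks$ enters. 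A ``spatial-mixing and convexity argument of the kind underlying \Lem~\ref{lem_o}'' does not by itself certify the optimum; this calculus step is unavoidable and is the heart of the second-moment proof.
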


\subsection{Proof of \Thm~\ref{thm_dist}}

We apply \Thm~1 in \cite{Janson_1995} to the random variable $Z_{\G, \beta} \vecone \cbc{\cO}$.
Condition $(1)$  readily follows from \Fac~\ref{fact_cycle_null}. For Condition $(2)$ let us write
\begin{equation*}
    \cC(G) = \cbc{C_{1}(G)=c_1, \dots, C_{\ell}(G)=c_\ell}
\end{equation*}
for any graph $G$. By \Lem~\ref{lem_o} considering $\Zgb$ rather than $\Zgb \vecone \cbc{\cO}$ only introduces an error of order $1+o(1)$ in Condition $(2)$. Using standard reformulations and the definition of $\hat \G$ from \eqref{eq_g_hat} we find
\begin{equation*}
    \frac{\Erw \brk{\Zgb \mid \cC(\G)}}{\Erw \brk{\Zgb}} = \frac{\Erw \brk{\Zgb \evC{(\G)}}}{\Pr \brk{\cC(\G)} \Erw \brk{\Zgb}} = \frac{\Pr \brk{\cC(\hat \G)}}{\Pr \brk{\cC(\G)}} = \frac{\Erw_{\hat \SIGMA} \brk{\Pr \brk{\cC(\hat \G)\mid \hat \SIGMA}}}{\Pr \brk{\cC(\G)}}.
\end{equation*}
Since a typical sample $\sigma$ from $\hat \SIGMA$ has the property that $\abs{\sigma \cdot \vecone} = O\bc{n^{2/3}}$, i.e. is relatively balanced, the Nishimori property (\Fac~\ref{fact_nishimori}) implies
\begin{equation*}
    \Erw_{\hat \SIGMA} \brk{\Pr \brk{\cC(\hat \G) \mid \hat \SIGMA}} \sim \Pr \brk{\cC{(\G^*)}}.
\end{equation*}
Condition $(2)$ now follows from \Fac~\ref{fact_cycle_null} and \Prop~\ref{prop_cycle_planted}.
For Condition $(3)$ consider any $\beta = \bks - \eps$ for some small $\eps>0$. Letting $\eta=\eta(\eps)>0$ a simple calculation reveals
\begin{equation*}
     \sum_{i \geq 1} \lambda_i \delta_i^2 \leq \sum_{i \geq 1} \lambda_i  \bc{\frac{e^{-\bks+\eps}-1}{e^{-\bks+\eps}+1}}^{2i} = \sum_{i \geq 1} \frac{(1-\eta)^i}{2i} < \infty
\end{equation*}
which also implies $\sum_{i \geq 3} \lambda_i \delta_i^2 < \infty$. Finally, by \Lem~\ref{lem_o}, \Prop s~\ref{prop_first_moment} and \ref{prop_second_moment} and the fact that for any $0 < x < 1$
$\log \bc{1-x} = -\sum_{i \geq 1} x^i/i$
we find for $0 < \beta < \bks$ and $d \geq 3$
\begin{align*}
    &\frac{\Erw \brk{\Zgb^2 \evO}}{\Erw \brk{\Zgb \evO}^2} = (1+o(1)) \frac{\Erw \brk{\Zgb^2 \evO}}{\Erw \brk{\Zgb}^2}\\
    &=  (1+o(1)) \frac{1 + e^\beta}{\sqrt{2  + 2 e^{2 \beta} + 2 d e^\beta -  d  - d e^{2 \beta}}} \exp \bc{\lambda_1 + \lambda_2 -  \frac{4 \lambda_1}{\bc{1 + e^\beta}^2} - \frac{4 \lambda_2  \bc{1 + e^{2 \beta}}^2}{\bc{1 + e^\beta}^4} + 2 \lambda_1 \delta_1 + 2 \lambda_2 \delta_2} \\
    &= (1+o(1)) \bc{1-(d-1)\bc{\frac{e^{-\beta}-1}{e^{-\beta}+1}}}^{-1/2} \exp \bc{ - \lambda_1 \delta_1^2 -\lambda_2 \delta_2^2}= (1+o(1)) \exp \bc{\sum_{i \geq 3} \lambda_i \delta_i^2}
\end{align*}
establishing Condition $(4)$ and thus the distribution of $\Zgb \evO$. Since $\Erw \brk{\Zgb \bc{1-\evO}} = o\bc{\Erw\brk{\Zgb}}$ by \Lem~\ref{lem_o}, \Thm~\ref{thm_dist} follows from Markov's inequality.

\section{Discussion}

Studying partition functions has a long tradition in combinatorics and mathematical physics. $k$-SAT, $q$-coloring or the stochastic block model are just some noteworthy examples where the partition function reveals fundamental and novel combinatorial insights. Due to its connection to the {\sc Max Cut} problem and the disassortative stochastic block model, the Ising antiferromagnet fits nicely into this list. 
For random $d$-regular graphs, Coja-Oghlan et al. \cite{Coja_2020} pinpointed its replica symmetry breaking phase transition at the Kesten-Stigum bound. Using the method of moments and spatial mixing arguments, they they determine $\Zgb$ up to $\exp(o(n))$.
In this paper, we move beyond this approximation and derive the limiting distribution of $Z_{\G, \beta}$ in the replica symmetric regime. We note that the distribution of $Z_{\G, \beta}$ above the Kesten-Stigum bound is fundamentally different.
A similar analysis for the \Erdos-\Renyi-model was carried out in \cite{Mossel_2011}.

Using the combination of the method of moments and small subgraph conditioning underlying our proof was initially pioneered by Robinson \& Wormald \cite{Robinson_1992} to prove that cubic graphs are \whp\ Hamiltonian. Janson \cite{Janson_1995} subsequently showed that small subgraph conditioning can be used to obtain limiting distributions. This strategy was successfully applied, among others, to the stochastic block model \cite{Mossel_2011} and the Viana-Bray model \cite{Guerra_2004}. 
For other problems, the second moment appears to be too crude for the entire replica symmetric phase and enhanced techniques are needed \cite{Coja_2018_2}. In this work, we enrich the classical strategy of the method of moments and small subgraph conditioning by spatial mixing arguments to cover the entire replica symmetric phase.

An interesting remaining question is to throw a bridge between the properties of the partition function $Z_{\G, \beta}$ and long-range correlations in $\G$. While it should be a small step from \Thm~\ref{thm_dist} to vindicate the absence of long-range correlations in the replica symmetric phase, proving the presence of long-range correlations above the Kesten-Stigum bound is a more challenging, yet important endeavour.

\section{Getting started} \label{app_start}

Before moving to the proofs of \Prop s~\ref{prop_cycle_planted}, \ref{prop_first_moment} and \ref{prop_second_moment}, let us introduce some additional notation.
With $\cP$ denoting the set of all probability distributions on a finite set $\Omega \neq \emptyset$ and two probability measures $\mu, \nu \in \cP(\Omega)$, let us introduce the entropy $H(\mu)$ and Kullback-Leibler divergence $\KL{\mu}{\nu}$
\begin{equation*}
    H(\mu) = - \sum_{\omega \in \Omega} \mu(\omega) \log \mu(\omega) \qquad \text{and} \qquad \KL{\mu}{\nu} = \sum_{\omega \in \Omega} \mu(\omega) \log \frac{\mu(\omega)}{\nu(\omega)} \in [0, \infty].
\end{equation*}
Note the convention $0 \cdot \log \left( \frac{0}{0}\right)=0$ and furthermore that if there exists some $\omega \in \Omega$ such that $\mu (\omega) > 0$ and $\nu (\omega) = 0$, this implies $D_{\textrm{KL}}(\mu \| \nu) = \infty$.
When we consider the product measure between two probability distribution $\mu$ and $\nu$, we will use the notation $\mu \tensor \nu$.

Next, let us state a fundamental result by Janson \cite{Janson_1995} which stipulates conditions under which one is able to obtain the limiting distribution of the partition function.

\begin{theorem}[Theorem 1 in \cite{Janson_1995}] \label{thm_janson}
Let $\lambda_i>0$ and $\delta_i \geq -1, i=1,2,\dots,$ be constants and suppose that for each $n$ there are random variables $C_{in}, i=1,2,\dots,$ and $Z_n$ (defined on the same probability space) such that $X_{in}$ is non-negative integer valued and $\Erw \brk{Z_{n}} \neq 0$ (at least of large n), and furthermore the following conditions are satisfied:
\begin{enumerate}
    \item $C_{in} \xrightarrow{d} \Lambda_i$ as $n \to \infty$, jointly for all $i$ where $\Lambda_i \sim \Po\bc{\lambda_i}$ are independent Poisson random variables;
    \item For any finite sequence $c_1, \dots, c_m$ of non-negative integers,
    \begin{equation*}
        \frac{\Erw \brk{Z_n \mid C_{1n}=c_1, \dots, C_{mn}=c_m}}{\Erw \brk{Z_n}} \to \prod_{i=1}^m (1+\delta_i)^{x_i} \exp \bc{-\lambda_i \delta_i} \qquad \text{as} \quad n \to \infty;
    \end{equation*}
        \item $\sum_{i} \lambda_i \delta_i^2 < \infty;$
        \item $\Erw\brk{Z_n^2}/\bc{\Erw\brk{Z_n}}^2 \to \exp \bc{\sum_i \lambda_i \delta_i^2} \qquad \text{as} \quad n \to \infty.$
\end{enumerate}
Then, we have
\begin{equation*}
    \frac{Z_n}{\Erw \brk{Z_n}} \xrightarrow{d} W = \prod_{i \geq 1} (1+\delta_i)^{\Lambda_i} \exp \bc{-\lambda_i \delta_i};
\end{equation*}
moreover, this and the convergence in (1) hold jointly. The infinite product defining $W$ converges a.s. and in $L^2$, with $\Erw \bc{W} = 1$ and $\Erw \bc{W^2} = \exp \bc{\sum_{i=1}^\infty \lambda_i \delta_i^2}$. Hence, the normalized variables $Y_n/\Erw \bc{Y_n}$ are uniformly square integrable. Furthermore, the event $W > 0$ equals, up to a set of probability zero, the event that $Z_i > 0$ for some $i$ with $\delta_i = -1$. In particular, $W > 0$ a.s. if and only if every $\delta_i > -1$.
\end{theorem}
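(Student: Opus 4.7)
The plan is to carry out Janson's \emph{small subgraph conditioning} argument: approximate $Z_n/\Erw\brk{Z_n}$ by its conditional expectation given finitely many cycle counts, identify the limit of this truncated object using hypothesis~(2), and close the argument by showing via hypothesis~(4) that the truncation error vanishes in $L^2$. To this end, for each $m$ set $\cF_{m,n}=\sigma(C_{1n},\dots,C_{mn})$ and define
\begin{equation*}
W^{(m)}_n=\Erw\brk{Z_n/\Erw\brk{Z_n}\mid \cF_{m,n}},\qquad W^{(m)}=\prod_{i=1}^{m}(1+\delta_i)^{\Lambda_i}\eul^{-\lambda_i\delta_i}.
\end{equation*}

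The first step is to show $W^{(m)}_n\xrightarrow{d} W^{(m)}$ as $n\to\infty$ for each fixed $m$. Hypothesis~(2) pins down the conditional expectation as an explicit function of the integer cycle counts in the limit, so together with the joint Poisson convergence from hypothesis~(1) and the continuous mapping theorem on $\NN^m$ the claim follows. A direct Poisson moment-generating-function calculation yields $\Erw\brk{(W^{(m)})^2}=\prod_{i=1}^{m}\exp(\lambda_i\delta_i^2)$; combining this with hypothesis~(3) and Kolmogorov's three-series theorem shows $W^{(m)}\to W$ almost surely and in $L^2$, with $\Erw\brk{W^2}=\exp(\sum_i\lambda_i\delta_i^2)$.

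The centerpiece of the proof is the orthogonal decomposition
\begin{equation*}
\Erw\brk{(Z_n/\Erw\brk{Z_n})^2}=\Erw\brk{(W^{(m)}_n)^2}+\Erw\brk{\Var(Z_n/\Erw\brk{Z_n}\mid \cF_{m,n})}.
\end{equation*}
Hypothesis~(4) fixes the left-hand side at $\exp(\sum_i\lambda_i\delta_i^2)$ in the limit $n\to\infty$, while $\Erw\brk{(W^{(m)}_n)^2}\to\Erw\brk{(W^{(m)})^2}$ by the weak convergence of the previous step upgraded to second-moment convergence through the uniform $L^2$-bound that hypothesis~(4) supplies. Sending $n\to\infty$ then $m\to\infty$ gives $\lim_m\limsup_n\Erw\brk{(Z_n/\Erw\brk{Z_n}-W^{(m)}_n)^2}=0$. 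A standard triangle argument against bounded Lipschitz test functions $\phi$ (compare $\phi(Z_n/\Erw\brk{Z_n})$ with $\phi(W^{(m)}_n)$ in $L^1$ via $L^2$, then $\phi(W^{(m)}_n)$ with $\phi(W^{(m)})$ via weak convergence, then $\phi(W^{(m)})$ with $\phi(W)$ via $L^2$) then yields $Z_n/\Erw\brk{Z_n}\xrightarrow{d}W$. Joint convergence with the $C_{in}$ is retained because $W^{(m)}_n$ is already $\cF_{m,n}$-measurable, and the characterization of $\{W>0\}$ reduces to a Borel--Cantelli argument applied to the indices with $\delta_i=-1$.

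The main obstacle is upgrading the weak convergence $W^{(m)}_n\xrightarrow{d}W^{(m)}$ to convergence of second moments, since the function $(c_1,\dots,c_m)\mapsto\prod_i(1+\delta_i)^{c_i}\eul^{-\lambda_i\delta_i}$ is unbounded on $\NN^m$ and distributional convergence alone is insufficient. I would resolve this by truncating each $C_{in}$ at a large threshold $K$, applying continuous mapping to the truncated integrand, and controlling the tail event $\max_i C_{in}>K$ using hypothesis~(4) together with the $L^2$-contractivity of conditional expectation; hypothesis~(3) ensures that the threshold $K$ can be sent to infinity with vanishing error. This truncation step is also where one must verify that the limiting product in hypothesis~(2) is compatible with the $L^2$-norm of $W^{(m)}$ computed independently in Step~2, which closes the circle of the argument.
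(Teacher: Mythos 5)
This statement is not proved in the paper at all: it is quoted verbatim as Theorem~1 of Janson's 1995 paper and used as a black box, so there is no in-paper proof to compare against. Your sketch is, in outline, exactly Janson's original small-subgraph-conditioning argument (conditioning on $\cF_{m,n}$, identifying the limit of $W^{(m)}_n$ via hypotheses (1)--(2), the martingale/$L^2$ convergence $W^{(m)}\to W$ with $\Erw\brk{(W^{(m)})^2}=\prod_{i\le m}\exp(\lambda_i\delta_i^2)$, the variance decomposition, and the triangle argument), and that outline is sound.

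The one place you go astray is the paragraph on the ``main obstacle.'' You do not need the full second-moment convergence $\Erw\brk{(W^{(m)}_n)^2}\to\Erw\brk{(W^{(m)})^2}$, and the truncation route you propose for it is shaky: bounding $\Erw\brk{(W^{(m)}_n)^2\vecone\{\max_i C_{in}>K\}}$ requires uniform integrability of $(W^{(m)}_n)^2$, which does not follow from hypothesis (4) (that only gives a uniform bound on second moments of $Z_n/\Erw\brk{Z_n}$, hence uniform integrability of $W^{(m)}_n$ itself, one power short). The standard and sufficient argument is one-sided: weak convergence plus Fatou (portmanteau for the nonnegative continuous function $x\mapsto x^2$) gives $\liminf_n\Erw\brk{(W^{(m)}_n)^2}\ge\Erw\brk{(W^{(m)})^2}$, while conditional Jensen gives $\Erw\brk{(W^{(m)}_n)^2}\le\Erw\brk{(Z_n/\Erw\brk{Z_n})^2}$. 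Feeding these two inequalities into your orthogonal decomposition and using (3)--(4) yields $\limsup_n\Erw\brk{(Z_n/\Erw\brk{Z_n}-W^{(m)}_n)^2}\le\exp(\sum_i\lambda_i\delta_i^2)-\exp(\sum_{i\le m}\lambda_i\delta_i^2)\to0$ as $m\to\infty$, which is all the triangle argument needs. With that correction your proof closes; the rest (joint convergence, the characterization of $\{W>0\}$ via Borel--Cantelli over the indices with $\delta_i=-1$) is fine as stated.
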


A substantial part of this paper is devoted to determining the first and second moment of $Z_\G$. As we will see in due course, this task requires a special version of the well-known Laplace's method, which is usually formulated in terms of integrals. In contrast to that, the model considered here is discrete and therefore requires a variation of Laplace's method which is applicable to countable sums. Fortunately, \cite{Greenhill_2010} provides an adaptation that we can leverage here. Let us start by providing the result of interest:

\begin{theorem}[Theorem 2.3 in \cite{Greenhill_2010}]\label{theoremGJR}
	Suppose the following:
	\begin{enumerate}
		\item $\mathcal{L} \subset \mathbb{R}^N$ is a lattice with rank $r \leq N$.
		\item $V \subseteq  \mathbb{R}^N$ is the $r$-dimensional subspace spanned by $\mathcal{L}$.
		\item $W = V + w$ is an affine subspace parallel to $V$, for some $w \in \mathbb{R}^N$.
		\item $K \subset \mathbb{R}^N$ is a compact convex set with non empty interior $K^\circ$.
		\item $\phi : K \rightarrow \mathbb{R}$ is a continuous function and the restriction of $\phi$ to $K \cap W$ has a unique maximum at some point $x_0 \in K^\circ \cap W$.
		\item $\phi$ is twice continuously differentiable in a neighbourhood of $x_0$ and $H := D^2 \phi \left( x_0\right)$ is its Hessian at $x_0$.
		\item $\psi : K_1 \rightarrow \mathbb{R}$ is a continuous function on some neighbourhood $K_1 \subseteq K$ of $x_0$ with $\psi \left( x_0\right) > 0$.
		\item For each positive integer $n$ there is a vector $\ell_n \in \mathbb{R}^N$ with $\frac{\ell_n}{n} \in W$.
		\item For each positive integer $n$ there is a positive real number $b_n$ and a function $a_n: \left( \mathcal{L} + \ell_n \right) \cap n K \rightarrow \mathbb{R}$ such that, as $n \rightarrow \infty$,
		\begin{align*}
			a_n \left( \ell \right)  = O \left( b_n e^{n \phi \left( \ell / n \right) + o\left( n\right)  } \right), \hspace{5 em} \ell \in \left( \mathcal{L} + \ell_n \right) \cap n K, 
		\end{align*}
		and 
		\begin{align*}
			a_n \left( \ell\right) = b_n \left( \psi \left( \frac{\ell}{n}\right) + o(1)  \right) e^{n \phi \left( \ell / n\right) } , \hspace{4 em} \ell \in \left( \mathcal{L} + \ell_n \right) \cap n K_1,
		\end{align*}
		uniformly for $\ell$ in the indicated sets.
	\end{enumerate}
	Then, provided $\det \left( - H \vert_V \right) \neq 0$, as $n \rightarrow \infty$,
	\begin{align*}
		\sum_{\ell \in \left( \mathcal{L} + \ell_n \right) \cap n K} a_n \left( \ell \right) \sim \frac{\left( 2 \pi n \right)^{r/2} \psi \left( x_0\right)  b_n e^{n \phi \left( x_0 \right) }}{\det \left( \mathcal{L} \right) \sqrt{\det \left( - H \vert_V \right)}}.
	\end{align*}
\end{theorem}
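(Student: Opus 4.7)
The plan is to prove \Thm~\ref{theoremGJR} by a discrete version of Laplace's method, reducing the lattice sum to a Gaussian integral on the subspace $V$. The argument splits naturally into three stages: localize around $x_0$, Taylor expand $\phi$ along $V$, and approximate the rescaled lattice sum by the Gaussian integral. The non-trivial input from the hypotheses is that the subspace $V$ captures exactly the directions of variation of the sum, so that the off-$W$ components are frozen and only the Hessian restricted to $V$ appears.

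First I would \emph{localize} around $x_0$. Pick $\eta > 0$ small enough that $B_\eta(x_0) \cap K \subseteq K_1$. Since $\phi$ is continuous on the compact set $K \cap W$ and attains its maximum uniquely at $x_0$, there is some $c > 0$ with $\phi(x) \leq \phi(x_0) - c$ on $(K \cap W) \setminus B_\eta(x_0)$. Combined with the crude upper bound $a_n(\ell) = O(b_n e^{n \phi(\ell/n) + o(n)})$ and the polynomial bound $|(\mathcal{L} + \ell_n) \cap nK| = O(n^N)$, the contribution of terms with $\ell/n \notin B_\eta(x_0)$ is $O(b_n e^{n \phi(x_0) - cn/2})$, exponentially dominated by the target asymptotic.

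On the local piece, since $x_0$ is an interior maximum of $\phi|_{K \cap W}$ and $\phi$ is $C^2$ near $x_0$, the gradient of $\phi$ along $V$ vanishes at $x_0$, so
\begin{equation*}
\phi(x_0 + y) = \phi(x_0) + \tfrac{1}{2} y^\top H|_V y + r(y), \qquad r(y) = o(\|y\|^2),
\end{equation*}
for $y \in V$ with $\|y\| \leq \eta$. Plugging in the refined asymptotic $a_n(\ell) = b_n(\psi(\ell/n) + o(1)) e^{n \phi(\ell/n)}$ and substituting $y = \ell/n - x_0$ turns the inner sum into
\begin{equation*}
b_n e^{n \phi(x_0)} \sum_{y} (\psi(x_0 + y) + o(1)) \exp\!\left( \tfrac{n}{2} y^\top H|_V y + n r(y) \right),
\end{equation*}
where $y$ ranges over the intersection of $\tfrac{1}{n}(\mathcal{L} + \ell_n) - x_0$ with the ball of radius $\eta$. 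Crucially, because $\ell_n/n \in W$ and $x_0 \in W$, this index set is a translate of $\tfrac{1}{n}\mathcal{L}$ sitting inside $V$, with covolume $\det(\mathcal{L})/n^r$ in $V$.

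The final step rescales $z = \sqrt{n}\, y$, so the lattice mesh becomes $\det(\mathcal{L})/n^{r/2}$ and the summand concentrates on $\|z\| \lesssim \sqrt n\, \eta$. A Riemann-sum estimate matches the lattice sum to
\begin{equation*}
\frac{n^{r/2}}{\det(\mathcal{L})} \int_V \psi\!\left(x_0 + z/\sqrt n\right) \exp\!\left( \tfrac{1}{2} z^\top H|_V z + n r(z/\sqrt n) \right) \mathrm{d}z,
\end{equation*}
and pointwise $\psi(x_0 + z/\sqrt n) \to \psi(x_0)$ and $n r(z/\sqrt n) \to 0$. The negative definiteness of $H|_V$, forced by $\det(-H|_V) \neq 0$ together with the maximum condition, supplies a dominating Gaussian, so dominated convergence evaluates the integral to $(2\pi)^{r/2} \psi(x_0) / \sqrt{\det(-H|_V)}$. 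Multiplying by the prefactor $b_n e^{n \phi(x_0)}$ gives the claimed asymptotic.

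The main technical obstacle is the uniform Riemann-sum / dominated-convergence step: for $\|z\|$ up to $\sqrt n\, \eta$ one must ensure that $n r(z/\sqrt n)$ stays below a fixed fraction of $|z^\top H|_V z|$, so that the full summand is controlled by one integrable Gaussian. This is precisely where the $C^2$-regularity at $x_0$ and the strict non-degeneracy of $H|_V$ are used in concert; neither the $o(n)$ tail bound nor the $o(1)$ refinement would be enough on its own. Once this uniformity is in place, the remainder of the proof is bookkeeping about lattice covolumes and the Gaussian integral.
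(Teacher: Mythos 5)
This statement is not proved in the paper at all: it is imported verbatim as Theorem 2.3 of \cite{Greenhill_2010}, and the surrounding text only adds an informal discussion of lattices and of the factor $\det(\mathcal{L})^{-1}$. So there is no in-paper argument to compare against; the relevant comparison is with the proof in the cited reference, which your sketch essentially reproduces. Your outline is the standard discrete Laplace method and is correct in all its main steps: the exponential localization using compactness of $K\cap W$, the crude $O\bigl(b_n e^{n\phi(\ell/n)+o(n)}\bigr)$ bound and the polynomial count of lattice points; the observation that $\ell_n/n\in W$ and $x_0\in W$ force the rescaled index set to be a translate of $\tfrac1n\mathcal{L}$ inside $V$, which is exactly why only $H\vert_V$ and $\det(\mathcal{L})$ appear; the vanishing of the gradient along $V$ at the interior maximum; and the Riemann-sum passage to the Gaussian integral $(2\pi)^{r/2}/\sqrt{\det(-H\vert_V)}$ after the $z=\sqrt{n}\,y$ rescaling. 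You also correctly identify the one genuinely delicate point, namely choosing $\eta$ small enough that the Taylor remainder satisfies $n\,|r(z/\sqrt n)|\le \tfrac12\varepsilon\|z\|^2$ with $\varepsilon$ below the smallest eigenvalue of $-H\vert_V$, so that a single integrable Gaussian dominates the whole rescaled sum; together with the uniformity of the $o(1)$ in hypothesis (9) this is what makes the dominated-convergence step legitimate. If you were to write this out in full, the only bookkeeping still to be spelled out is the Riemann-sum error for a shrinking lattice mesh against a fixed dominating Gaussian, which is routine. In short: correct approach, consistent with the source the paper relies on, with the technical crux properly flagged.
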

\Thm~\ref{theoremGJR} is largely self-explanatory. The concept of lattices, however, is not obvious from the theorem itself. Therefore, we briefly revisit the idea of lattices and how they are connected to our model. In general, lattices are discrete subgroups of $\mathbb{R}^N$ where each lattice is isomorphic to $\mathbb{Z}^r$ for some $0 \leq r \leq N$. In this context, \textit{discrete} simply means that the intersection of a lattice with an arbitrary, bounded set in $\mathbb{R}^N$ is finite. Furthermore, $r$ is commonly called the rank of the respective lattice. This means that each lattice has a (not necessarily unique) basis consisting of the vectors $x_1, \ldots, x_r$. The crucial characteristics of these basis vectors are on the one hand that they are independent. On the other hand, every element of the respective lattice has a unique representation of the form $\sum_{i =1}^{r} k_i \cdot x_i$ where $k_i \in \mathbb{Z}$ for all $i \in [r]$.\\
In applying \Thm~\ref{theoremGJR} we are especially interested in understanding the determinant $\det \left(\mathcal{L} \right)$ for a given lattice $\mathcal{L}$. Formally, $\det \left(\mathcal{L} \right)$ is simply obtained by calculating the determinant of the matrix that consists of the basis vectors $x_1, \ldots, x_r$ mentioned above. Intuitively, the determinant provides the $r$-dimensional volume of a unit cell of the lattice $\mathcal{L}$. Note that the term $\left( \det \left(\mathcal{L} \right)\right)^{-1}$ in Theorem \ref{theoremGJR} is the key difference compared to more common versions of Laplace's method for integrals.

\section{Short Cycles in the Regular Stochastic Block Model / Proof of Proposition \ref{prop_cycle_planted}}

Let us start with a brief repetition of the Regular Stochastic Block Model (RSBM) which is the result of the following experiment. Given a vertex set $V_n = \cbc{v_1, \ldots, v_n}$, we first sample a spin configuration uniformly at random. We denote this uniformly sampled configuration by $\SIGMA^*$. Next, we draw a $d$-regular graph $\G^* = \G^* (\SIGMA^*)$ from the distribution
\begin{align*}
	\Pr \brk{\mathbb{G}^* = G \vert \SIGMA^* = \sigma} \propto \exp \left( - \beta \mathcal{H}_G \left( \sigma \right) \right).
\end{align*}
For some graph $d$-regular $G$ with $n$ nodes and some spin configuration $\sigma \in \left\lbrace \pm 1 \right\rbrace^n$ on the nodes of $G$ we define
\begin{align} \label{eq_def_mu_1}
	\mu_{++}(G,\sigma) := \frac{2}{dn} \sum_{(u,v) \in E} \vecone {\left\lbrace \sigma \left( v \right) = \sigma \left( u\right)  = +1 \right\rbrace }.
\end{align}
Since $G$ has $\frac{d n}{2}$ edges in total, $\mu_{++}$ simply measures the fraction of edges that connect two positive vertices. Analogously, we define 
\begin{align} \label{eq_def_mu_2}
	\mu_{--}(G,\sigma)  &:= \frac{2}{dn} \sum_{(u,v) \in E} \vecone {\left\lbrace \sigma \left( v \right) = \sigma \left( u\right)  = -1 \right\rbrace }
	\hspace{3 em} \textrm{ and } \hspace{3 em} \\
	\mu_{+-}(G,\sigma) &= \mu_{-+}(G,\sigma) : = \frac{1}{dn} \sum_{(u,v) \in E} \vecone {\left\lbrace \sigma \left( v \right) \neq \sigma \left( u\right) \right\rbrace }.
\end{align}
Due to the fact that our model is built on undirected edges, we just count all the edges connecting vertices with different spins and evenly 'split' them between $\mu_{+-}$ and $\mu_{-+}$. In a similar way, we define
\begin{align} \label{eq_def_rho}
	\rho_+(G,\sigma) := \frac{1}{n} \sum_{v \in V} \vecone {\left\lbrace \sigma \left( v \right) = +1 \right\rbrace } \hspace{3 em} \textrm{ and } \hspace{3 em} \rho_-(G,\sigma) := \frac{1}{n} \sum_{v \in V} \vecone {\left\lbrace \sigma \left( v \right) = -1 \right\rbrace }
\end{align}
where $\rho_{+}$ and $\rho_{- }$ depict the fractions of nodes that have been assigned a positive spin or a negative one, respectively. For notational convenience, we usually drop the reference to the graph $G$ and the spin configuration $\sigma$. Accordingly, let $\mu' = \mu(\G^*, \SIGMA^*)$ and $\cM(\sigma)$ denote the set of all probability distributions fulfilling the obvious symmetry and marginalization conditions, i.e.
\begin{align*}
    \mu_{++} + \mu_{+-} = \rho_+, \qquad \mu_{--} + \mu_{+-} = \rho_-, \qquad \mu_{+-} = \mu_{-+}
\end{align*}
and where $\mu_{++}dn/2, \mu_{--}dn/2$ and $\mu_{+-}dn/2$ are integers.
Further, we define a probability measure $\hat{\mu}$ with 
\begin{align*}
    \hat{\mu}_{++}= \hat{\mu}_{--}= \frac{e^{- \beta}}{2 \left(1 +e^{-\beta} \right)} \qquad \text{and} \qquad \hat{\mu}_{+-}= \hat{\mu}_{-+}= \frac{1}{2 \left(1 +e^{-\beta} \right)}.
\end{align*}
To determine the distribution of short cycles in the RSBM, we start by considering the event
\begin{align*}
	\cA_{\mu} := \cbc{ \norm{\mu' - \mu} = O\bc{ n^{-1/2} \log n}}.
\end{align*}
In the next lines, we establish that $\cA_{\hat \mu}$ is a high probability event.

\begin{lemma} \label{A_mu_high_prob_event}
We have $\Pr\brk{\cA_{\hat\mu}} = 1- o(1)$.
\end{lemma}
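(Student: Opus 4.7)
My plan is to split the analysis into two stages: first control the spin imbalance of $\SIGMA^*$, and then, conditionally on $\SIGMA^*$, analyse the edge-type empirical distribution via the configuration model. For the first stage, since $\SIGMA^*$ is uniform on $\cbc{\pm 1}^{V_n}$, the class sizes $n_\pm = \rho_\pm(\SIGMA^*) n$ are sums of i.i.d.\ Bernoulli$(1/2)$ variables, so Hoeffding's inequality immediately gives $|\rho_+(\SIGMA^*) - 1/2| = O(n^{-1/2}\log n)$ with probability $1 - o(1)$. It therefore suffices to fix any $\sigma$ satisfying this balance property and prove concentration of $\mu(\G^*, \sigma)$ around $\hat\mu$ with high probability over $\G^*$.

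Given such a $\sigma$, the conditional distribution of $\G^*$ is proportional to $e^{-\beta(e_{++}+e_{--})}$ over $d$-regular graphs on $V_n$. I would pass to the configuration model: assign $d$ half-edges to each vertex, choose a uniform perfect matching, and reweight by $e^{-\beta}$ for each monochromatic pair that the matching creates. The probability of realizing prescribed edge-type counts $(e_{++},e_{--},e_{+-})$ then factors as
\begin{equation*}
\Pr\brk{\mu(\G^*,\sigma) = \mu \mid \SIGMA^* = \sigma} \;\propto\; \binom{dn_+}{2e_{++}} (2e_{++}-1)!! \binom{dn_-}{2e_{--}} (2e_{--}-1)!! \cdot e_{+-}! \cdot e^{-\beta(e_{++}+e_{--})},
\end{equation*}
and Stirling's formula converts the right-hand side into $\exp\bc{n\Phi_\sigma(\mu) + O(\log n)}$ for a smooth function $\Phi_\sigma$ on the simplex of admissible $\mu$ compatible with the marginals $\rho_\pm(\sigma)$. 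A Lagrange multiplier computation identifies the unique interior maximizer of $\Phi_\sigma$; when $\rho_+ = \rho_- = 1/2$ it is exactly $\hat\mu$, and by the implicit function theorem the maximizer $\mu^*_\sigma$ under a perturbed marginal $|\rho_+ - 1/2| = O(n^{-1/2}\log n)$ still satisfies $\|\mu^*_\sigma - \hat\mu\| = O(n^{-1/2}\log n)$.

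To close the argument I would Taylor-expand $\Phi_\sigma$ around $\mu^*_\sigma$ and use its negative-definite Hessian to obtain, for any $\mu$ on the admissible lattice with $\|\mu - \mu^*_\sigma\| \geq C n^{-1/2}\log n$, the sub-Gaussian ratio $\Pr[\mu(\G^*,\sigma) = \mu]/\Pr[\mu(\G^*,\sigma) = \mu^*_\sigma] \leq \exp\bc{-c(\log n)^2}$. Summing over the at most $O(n^2)$ admissible lattice points and combining with the triangle inequality and the stage-one bound then delivers $\cA_{\hat\mu}$ with probability $1-o(1)$. The main technical hurdle will be establishing strict concavity of $\Phi_\sigma$ with a uniform curvature constant on the feasible region: computing the Hessian and restricting it to the tangent space of the constraints $\mu_{++} + \mu_{+-} = \rho_+$ and $\mu_{--} + \mu_{+-} = \rho_-$ is routine, but one must confirm non-degeneracy carefully so that the $(\log n)^2$ factor in the exponent dominates the $\log n$ entropy of the lattice. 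This curvature computation is essentially the same one that will reappear when Laplace's method is applied to the first and second moments, so it is natural to set up the bookkeeping in a way that can be reused later.
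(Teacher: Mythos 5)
Your argument is sound and would prove the lemma, but it is routed differently from the paper's. The paper does not analyse the conditional law of $\mu(\G^*,\sigma)$ directly: it recycles the first-moment machinery of \Sec~\ref{section_first_moment} (the representation \eqref{first_moment_before_Laplace}, the unique maximizer from \Lem~\ref{maximum_first_moment} and the Hessian from \Lem~\ref{hessian_first_moment}), Taylor-expands $\psi$ jointly in $(\mu_{++},\rho_+)$ to show that the terms with $\norm{\mu-\hat\mu}=\Omega(n^{-1/2}\log n)$ contribute only an $O(n^{-\log n})$ fraction of $\Erw[Z_{\G,\beta}]$, and then invokes Corollary~4.5 of \cite{Coja_2020} — a Nishimori-type transfer principle — to convert $\Erw[Z_{\G,\beta}]=(1+o(1))\Erw[Z_{\G,\beta}\vecone\{\cA_{\hat\mu}\}]$ into $\Pr[\cA_{\hat\mu}]=1-o(1)$ for the planted model. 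You instead condition on $\SIGMA^*$ (after a Hoeffding bound on the class sizes) and run the Laplace/large-deviations argument on the reweighted pairing measure itself, absorbing the $O(n^{-1/2}\log n)$ imbalance of $\rho_+$ via the implicit function theorem. The analytic core — same double-factorial/binomial counts, Stirling, identification of $\hat\mu$ as the stationary point, negative-definite Hessian, union bound over $O(\mathrm{poly}(n))$ lattice points — is shared, but your version is self-contained (no appeal to the external transfer corollary) and enjoys a cleaner concavity structure, since for fixed $\rho$ the rate function is $-\tfrac d2 D_{\mathrm{KL}}(\mu\|\rho\otimes\rho)$ plus linear terms and hence genuinely concave in $\mu$, whereas the paper must argue uniqueness of the joint maximizer over $(\mu_{++},\rho_+)$. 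The price is that you must redo the optimization conditionally on $\sigma$ and handle the perturbed marginal, and you should also say a word about passing from the pairing model back to simple graphs (conditioning on simplicity costs only a $\Theta(1)$ factor by \Lem~\ref{prob_planted_pairing_model_simple}, so a $1-o(1)$ bound survives); neither point is a gap, just bookkeeping you would need to include.
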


\begin{proof}[Proof of Lemma \ref{A_mu_high_prob_event}]
In the following we will write $\mu$ for $\mu(G, \sigma)$ when the reference to $G$ and $\sigma$ is obvious. For this proof, we leverage some results that are derived in detail in Section \ref{section_first_moment}. More specifically, we consider equation \eqref{first_moment_before_Laplace}, that is
	\begin{align*}
		\mathbb{E}\left[ Z_{\mathbb{G}\left( n, d\right), \beta}   \right] 
		=  \exp\left(O \left( \frac{1}{n}\right)\right)  \cdot \sum_{\left( \rho_+, \mu_{++}\right) \in \mathcal{Q}}  \frac{1}{\pi n\sqrt{ 2 \mu_{++}\mu_{--} \mu_{+-} d}} \exp\left(  n  \psi\left( \mu_{++}, \rho_+\right)\right).
	\end{align*}
	where
	\begin{align*}
		\psi\left( \mu_{++}, \rho_+\right)  :=  \textrm{H} \left( \rho\right) - \frac{d}{2}\left( D_\textrm{KL} (\mu \vert\vert \rho \otimes \rho) + \beta  \left(1 +2 \mu_{++} -2 \rho_+ \right)\right)
	\end{align*}
and $\mathcal{Q}$ is the set of all conceivable pairs $\bc{ \rho_+, \mu_{++}}$. Furthermore, from Lemma \ref{maximum_first_moment} we know that $\psi\bc{ \mu_{++}, \rho_+}$ obtains it unique maximum on $\mathcal{Q}$ at $\bc{ \hat{\mu}_{++}, \hat{\rho}_+} = \bc{\frac{e^{- \beta}}{2 \bc{1 + e^{- \beta}}}, \frac{1}{2}}$. The entries of the Hessian turn out to be
\begin{align*}
	\frac{\partial^2 \psi}{\partial \mu_{++}^2}  \bc{\hat{\mu}, \hat{\rho}}  &= -2 d \left( 1 + e^{- \beta}\right)^2  e^\beta = \Theta(1) \\
	\frac{\partial^2 \psi} {\partial \mu_{++}\partial \rho_+}\bc{\hat{\mu}, \hat{\rho}} &= 2 d \left( 1 + e^{- \beta}\right)^2 e^\beta = \Theta(1)\\
	\frac{\partial^2 \psi}{\partial \rho_+^2}\bc{\hat{\mu}, \hat{\rho}} &= -4 -2 d \left(1 + e^{-\beta} + 2 e^\beta \right) = \Theta(1).
\end{align*}
Note that a detailed calculation of the Hessian can be found in Section \ref{section_first_moment}.
With all these results at hand, the two dimensional Taylor expansion of $\psi$ at $\bc{\hat{\mu}, \hat{\rho}}$ turns out to be
\begin{align*}
	\psi\left( \mu_{++}, \rho_+\right) &= 	\psi\bc{\hat{\mu}, \hat{\rho}}
	+ \Theta \bc{1} \bc{ \bc{\rho_+ - \hat{\rho}_+}^2 + \bc{\mu_+ - \hat{\mu}_+}^2 + \bc{\rho_+ - \hat{\rho}_+}\bc{\mu_+ - \hat{\mu}_+}} + O \bc{ \norm{\mu - \hat{\mu}}^3}\\
	&= 	\psi\bc{\hat{\mu}, \hat{\rho}} +  \Theta \bc{ \norm{\mu - \hat{\mu}}^2}
\end{align*}
where we exploited that the higher order derivatives are bounded.
Keeping this in mind, we obtain
\begin{align*}
	 &\exp\left(O \left( \frac{1}{n}\right)\right)  \cdot \sum_{\left( \rho_+, \mu_{++}\right) \in \mathcal{Q}}  \frac{1}{\pi n\sqrt{ 2 \mu_{++}\mu_{--} \mu_{+-} d}} \exp\left(  n  \psi\left( \mu_{++}, \rho_+\right)\right) \vecone{\left\lbrace 1 - \cA_{\hat \mu} \right\rbrace}\\
	&= \sum_{\left( \rho_+, \mu_{++}\right) \in \mathcal{Q}}  \exp\left(  n  \psi\bc{\hat{\mu}, \hat{\rho}} - \Omega \bc{\log^2 n} \right) \vecone{\left\lbrace 1 - \cA_{\hat \mu} \right\rbrace}\\
	&=   O \bc{n^2}  \exp\left(  n  \psi\bc{\hat{\mu}, \hat{\rho}} - \Omega \bc{\log^2 n} \right) 
	=   O \bc{n^{- \log n}}  \exp\left(  n  \psi\bc{\hat{\mu}, \hat{\rho}}  \right)\\
\end{align*}
which in turn yields
\begin{align*}
	\mathbb{E} \brk{Z_{\mathbb{G}, \beta}}&= \exp\left(O \left( \frac{1}{n}\right)\right)  \cdot \sum_{\left( \rho_+, \mu_{++}\right) \in \mathcal{Q}}  \frac{1}{\pi n\sqrt{ 2 \mu_{++}\mu_{--} \mu_{+-} d}} \exp\left(  n  \psi\left( \mu_{++}, \rho_+\right)\right) \vecone{\left\lbrace \cA_{\hat \mu} \right\rbrace}\\
	&\qquad + \exp\left(O \left( \frac{1}{n}\right)\right)  \cdot \sum_{\left( \rho_+, \mu_{++}\right) \in \mathcal{Q}}  \frac{1}{\pi n\sqrt{ 2 \mu_{++}\mu_{--} \mu_{+-} d}} \exp\left(  n  \psi\left( \mu_{++}, \rho_+\right)\right) \vecone{\left\lbrace 1 - \cA_{\hat \mu} \right\rbrace}\\
	&= \bc{1 + o(1)}  \cdot \sum_{\left( \rho_+, \mu_{++}\right) \in \mathcal{Q}}  \frac{1}{\pi n\sqrt{ 2 \mu_{++}\mu_{--} \mu_{+-} d}} \exp\left(  n  \psi\left( \mu_{++}, \rho_+\right)\right) \vecone{\left\lbrace \cA_{\hat \mu} \right\rbrace}\\
	&= \bc{1 + o(1)}	\mathbb{E} \brk{Z_{\mathbb{G}, \beta}\vecone{\left\lbrace \cA_{\hat \mu} \right\rbrace}}.
\end{align*}
Now, the proof is almost completed. Corollary 4.5 in \cite{Coja_2020} states that iff $\mathbb{E} \brk{Z_{\mathbb{G}, \beta}} = \bc{1 + o(1)} \mathbb{E} \brk{Z_{\mathbb{G}, \beta}\vecone{\left\lbrace \cA_{\hat \mu} \right\rbrace}}$ holds, we have $\Pr \brk{\mathbb{G}^* \in \cA_{\hat \mu}} = 1 - o(1)$. This is just the desired statement.
\end{proof}

The following preliminary arguments combine ideas from  \cite{Janson_2011} and \cite{Mossel_2011} to derive the distribution of short cycles in $\G^*$. We apply the method of moments to derive expected values conditional on $\mu'$ being close to $\hat{\mu}$. Then, with Lemma \ref{A_mu_high_prob_event}, we draw conclusions for the unconditional expectation.
Let $C_l \left( \mathbb{G}^* \right) $ be the number of cycles of length $l$ in  $\mathbb{G}^*$.
Furthermore, let $M$ denote the number of edges $e_1, \ldots e_l$ that connect vertices with opposite spins. This construction immediately implies that $M$ is an even number.
Let us briefly recap the configuration model to construct a $d$-regular graph on $n$ uniformly at random. To get started, we take $d$ copies of each of the $n$ nodes. Thus, we have $dn$ nodes in total. In the next step, we choose a perfect matching uniformly at random. To obtain a graph with $n$ nodes again, we merge the $d$ copies of each node, providing a graph with $\frac{d n}{2}$ edges in total. Since this procedure does not rule out self-loops or double-edges, we condition on the event $\mathcal{S}$ that we obtain a simple graph. Note that standard results from the literature entail that $\mathbb{P} \left[ G \in \mathcal{S}\right] = \Omega \left( 1\right) $. Similarly, conditional on $\mathcal{S}$, each of the admissable $d$-regular graphs is created with the same probability. \\
Now recall the probability to observe a specific graph in the regular stochastic block model 
\begin{align} \label{eq_rsbm}
	\mathbb{P} \left[ \mathbb{G}^* = G \vert \sigma \right] \propto \exp \left( - \beta \mathcal{H}_G \left( \sigma \right) \right).
\end{align}
Clearly, the definition of $\mathbb{G}^*$ does not give rise to a uniform distribution over all admissable graphs. However, it is easy to see that \eqref{eq_rsbm} yields a uniform distribution over all graphs exhibiting a specific $\mu$. This observation is central towards deriving the distribution of short cycles in $\G^*$.

\begin{lemma} \label{lemma_cycle_planted_optimal_mu}
	Let 
	\begin{equation*}
		\Xi_i \sim \Po \bc{\lambda_i \bc{1 + \delta_i}}
	\end{equation*}
	be a sequence of independent Poisson random variables for $i \geq 3$. Then jointly for all $i$ we have $C_{i}(\mathbb{G}^*) \vert_{\hat{\mu}} \xrightarrow{d} \Xi_i$ as $n \to \infty$.
\end{lemma}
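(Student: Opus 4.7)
The strategy is the method of factorial moments for joint Poisson convergence: for any finite sequence $(r_i)_{i=3}^\ell$ of non-negative integers, I aim to show
\begin{equation*}
\Erw\brk{\prod_{i=3}^\ell \bc{C_i(\G^*)}_{r_i} \mid \mu' = \hat\mu} \longrightarrow \prod_{i=3}^\ell \brk{\lambda_i\bc{1+\delta_i}}^{r_i} \quad \text{as } n \to \infty,
\end{equation*}
from which the joint convergence of $\bc{C_i(\G^*)|_{\hat\mu}}_i$ to the claimed independent Poisson variables follows by a standard moments-to-distribution argument.

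First I would condition additionally on the planted spin $\SIGMA^*$, which is balanced up to $O(\sqrt n)$ fluctuations \whp, so that each cycle vertex carries spin $\pm 1$ with probability $\tfrac12+o(1)$. Given $\SIGMA^*$ and $\mu'=\hat\mu$, the graph $\G^*$ is uniform over simple $d$-regular graphs with the prescribed edge-type counts, and switching to the configuration model conditioned on simplicity costs only a $\Theta(1)$ factor that does not distort short-cycle asymptotics. The key structural observation is that the $\hat\mu$-conditioning decomposes the configuration matching into three asymptotically independent pieces, namely an intra-plus matching on $2e_{++}$ half-edges, an intra-minus matching on $2e_{--}$ half-edges, and a uniform bijection across the $e_{+-}$ cross pairs, preceded by a uniformly random assignment of plus and minus half-edges to the intra and cross classes.

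Next I would compute the first factorial moment. There are $(n)_i/(2i)$ cyclic placements of $i$ distinct vertices and $(d(d-1))^i$ ordered choices of incident half-edges per placement. For a cycle with $a$ plus-plus, $b$ minus-minus and $k$ plus-minus edges ($a+b+k=i$), a short Stirling calculation inside the decomposition above gives a cycle-present probability of the form $(dn)^{-i}\cdot 4^i\,\hat\mu_{++}^a \hat\mu_{--}^b \hat\mu_{+-}^k$: relative to the unconstrained configuration model, each edge picks up a type-dependent enrichment factor of $4\hat\mu_t$, since a uniformly chosen plus or minus half-edge lies in the class dictated by its type with asymptotic probability $2\hat\mu_t$ or $2\hat\mu_{+-}$ respectively. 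Averaging over the spin pattern on the cycle vertices and using the symmetry $\hat\mu_{++}=\hat\mu_{--}$ together with the binomial identity
\begin{equation*}
\sum_{k \text{ even}}\binom{i}{k}\hat\mu_{+-}^k\hat\mu_{++}^{i-k} = \tfrac{1}{2}\brk{\bc{\hat\mu_{++}+\hat\mu_{+-}}^i + \bc{\hat\mu_{++}-\hat\mu_{+-}}^i},
\end{equation*}
and finally inserting $\hat\mu_{++}+\hat\mu_{+-}=\tfrac12$ and $\hat\mu_{++}-\hat\mu_{+-}=\tfrac12\cdot\frac{e^{-\beta}-1}{e^{-\beta}+1}$, the enrichment sum collapses to $4^{-i}(1+\delta_i)$, so that $\Erw\brk{C_i(\G^*)|_{\hat\mu}}\to\lambda_i(1+\delta_i)$.

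For the higher joint factorial moments I would sum over ordered $r$-tuples of cycles with $r=\sum_i r_i$. A standard degree-of-freedom bound shows that tuples sharing any vertex or edge contribute $O(1/n)$ and can be discarded. For vertex-disjoint tuples the constrained probability that all cycle edges are present factorizes to leading order across the cycles, because disjoint edge groups couple through the $\hat\mu$-constraint only at sub-leading order, so the first-moment calculation applies factor by factor and the joint factorial moment tends to $\prod_i\brk{\lambda_i(1+\delta_i)}^{r_i}$ as required. The main obstacle I anticipate is the rigorous Stirling bookkeeping that underlies the per-edge enrichment $4\hat\mu_t$ and, more pressingly, the joint version of this argument: one needs uniform control of the $\hat\mu$-conditioned joint probability of many specified edges across disjoint cycles, which is precisely the subtle dependency flagged in the introduction as the chief difficulty of working with the regular planted model rather than its \Erdos--\Renyi\ counterpart.
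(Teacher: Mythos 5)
Your proposal follows essentially the same route as the paper's proof: you condition on the edge-type profile being $\hat\mu$, compute the probability of a prescribed cycle as a product of type-dependent per-edge factors (your enrichment factor $4\hat\mu_t/(dn)$ reproduces exactly the paper's asymptotic formula for $p_{l,M}(\hat\mu')$), sum over spin patterns with the same binomial identity to obtain $\lambda_i(1+\delta_i)$, and control the higher (factorial) moments by discarding overlapping cycle tuples via the same degrees-of-freedom bound. The only cosmetic differences are that you describe the conditional law through a three-way decomposition of the matching where the paper writes out the exact double-factorial and binomial-coefficient formula, and that you organize the higher moments as joint factorial moments rather than powers $\Erw\brk{C_l(\G^*)^j}$.
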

\begin{proof}
Let $p_{l, M}$ be the probability that any given set of $l$ edges where $l_{++}$ edges connect two positive vertices and $l_{--}$ edges connect two negative edges results from the construction of $\mathbb{G}^*$ conditioned on some some fixed $\mu$. We readily find
\begin{align*}
    	p_{l, M} \left( \mu \right) &= \frac{\binom{dn \rho_{+} - 2 l_{++} - M}{dn \mu_{++}- 2 l_{++}}\left( dn \mu_{++} - 2 l_{++}- 1 \right) !!}
	{\binom{dn \rho_{+}}{dn \mu_{++}}\left( dn \mu_{++} - 1 \right) !!} \cdot \frac{\binom{dn \rho_{-} - 2 l_{--} - M}{dn \mu_{--} - 2 l_{--}} \left( dn \mu_{--}  - 2 l_{--}- 1 \right) !! \left( d n \mu_{+-} - M  \right)!}
	{\binom{dn \rho_{-}}{dn \mu_{--}} \left( dn \mu_{--} - 1 \right) !! \left( d n \mu_{+-}  \right)!}.
\end{align*}
Using the following well-known identity.
\begin{align} \label{eq_df}
    \left( 2k -1 \right) !! = \frac{\left( 2k\right) !}{k! 2^k}
\end{align}
we find
\begin{align} \label{eq_cycle_1}
	\frac{\left( dn \mu_{++} - 2 l_{++}- 1 \right) !!  \left( dn \mu_{--}  - 2 l_{--}- 1 \right) !! \left( d n \mu_{+-} - M \right)!}
	{\left( dn \mu_{++} - 1 \right) !!  \left( dn \mu_{--} - 1 \right) !! \left( d n \mu_{+-} \right)!} 
	&=\frac{\frac{\left( dn \mu_{++} - 2 l_{++}\right)! }{\left( \frac{d n}{2}\mu_{++} - l_{++}\right)! 2^{\frac{d n}{2}\mu_{++} - l_{++}}}
	\cdot \frac{\left( dn \mu_{--} - 2 l_{--}\right)! }{\left( \frac{d n}{2}\mu_{--} - l_{--}\right)! 2^{\frac{d n}{2}\mu_{--} - l_{--}}}}
	{\frac{\left( dn \mu_{++}\right)!}{\left( \frac{d n}{2} \mu_{++} \right)! 2^{\frac{d n}{2} \mu_{++}}}  \cdot \frac{\left( dn \mu_{--}\right)!}{\left( \frac{d n}{2} \mu_{--} \right)! 2^{\frac{d n}{2} \mu_{--}}} \cdot \left( d n \mu_{+-} \right)_M } \notag\\
	&= 2^{l - M}  \cdot \frac{ \left( \frac{d n}{2}\mu_{++}\right)_{l_{++}} \left( \frac{d n}{2}\mu_{--}\right)_{l_{--}} }{\left( d n \mu_{++} \right)_{2 l_{++}} \left( d n \mu_{--} \right)_{2 l_{--}}\left( d n \mu_{+-} \right)_{ M}}.
\end{align}
Moving on to the binomial coefficients and using Stirling's formula
\begin{align} \label{eq_stirling}
    k! = \sqrt{2 \pi k} \left( \frac{k}{e}\right) ^k \exp \left( O \left( \frac{1}{k} \right) \right)
\end{align}
we obtain
\begin{align} \label{eq_cycle_2}
	\frac{\binom{dn \rho_{+} - 2 l_{++} - M}{dn \mu_{++}- 2 l_{++}} \binom{dn \rho_{-} - 2 l_{--} - M}{dn \mu_{--} - 2 l_{--}} }
	{\binom{dn \rho_{+}}{dn \mu_{++}}\binom{dn \rho_{-}}{dn \mu_{--}}} 
	&= \frac{\left( d n \mu_{++}\right)_{2 l_{++}} \left( d n \mu_{--}\right)_{2 l_{--}} }{\left( d n \rho_{+}\right)_{2 l_{++} + M} \left( d n \rho_{-}\right)_{2 l_{--} + M}}
	\cdot \frac{\left( d n \rho_{+} - d n \mu_{++} \right)!  \left( d n \rho_{-} - d n \mu_{--} \right)!}{\left( d n \rho_{+} - d n \mu_{++} - M \right)!  \left( d n \rho_{-} - d n \mu_{--} - M \right)!} \notag \\
	&= \frac{\left( d n \mu_{++}\right)_{2 l_{++}} \left( d n \mu_{--}\right)_{2 l_{--}} \left( d n \rho_{+} - d n \mu_{++} \right)_M  \left( d n \rho_{-} - d n \mu_{--} \right)_M}{\left( d n \rho_{+}\right)_{2 l_{++} + M} \left( d n \rho_{-}\right)_{2 l_{--} + M}}	
\end{align}
Combining \eqref{eq_cycle_1} and \eqref{eq_cycle_2}, we yield
\begin{align*}
	p_{l, M} \left( \mu \right) = 2^{l - M} \cdot \frac{\left( \frac{d n}{2}\mu_{++}\right)_{l_{++}} \left( \frac{d n}{2}\mu_{--}\right)_{l_{--}} \left( d n \rho_{+} - d n \mu_{++} \right)_M  \left( d n \rho_{-} - d n \mu_{--} \right)_M }{\left( d n \rho_{+}\right)_{2 l_{++} + M} \left( d n \rho_{-}\right)_{2 l_{--} + M}\left( d n \mu_{+-} \right)_{ M}}.
\end{align*}
In particular, we thus have for all $\hat{\mu}' \in \cA_{\hat\mu}$
\begin{align*}
	p_{l, M} \left( \hat{\mu}' \right) 
	&= 2^{l - M} \frac{ \left( \frac{d n}{2} \frac{e^{- \beta}}{2 \left(1 +e^{-\beta}\right) }\right)^{l_{++}} \left( \frac{d n }{2}\frac{e^{- \beta}}{2 \left(1 +e^{-\beta}\right) }\right)^{l_{--}} \left( d n \frac{1}{2 \left( 1 + e^{- \beta}\right) }\right)^M \left( d n \frac{1}{2 \left( 1 + e^{- \beta}\right) }\right)^M  }{\left( \frac{d n}{2} \right)^{2 l_{++} + M} \left( \frac{d n}{2}\right)^{2 l_{--} + M}\left( d n \frac{1}{2 \left(1 +e^{-\beta}\right) } \right)^{M}} \bc{1 + o(1)}\\
	&= \frac{2^{l - M} \left(  \frac{e^{- \beta}}{2 \left(1 +e^{-\beta}\right) }\right)^{l - M} \left(  \frac{1}{  1 + e^{- \beta} }\right)^{2 M}}{\left( \frac{d n}{2} \right)^{ l_{++} } \left( \frac{d n}{2}\right)^{ l_{--}}\left( d n \frac{1}{2 \left(1 +e^{-\beta}\right) } \right)^{M}}\bc{1 + o(1)}
	=  \left( \frac{2}{dn} \right)^l \left( \frac{e^{- \beta}}{ \left(1 +e^{-\beta}\right) }\right)^{l - M} \left( \frac{1}{ \left(1 +e^{-\beta}\right) } \right)^{M}\bc{1 + o(1)}.
\end{align*}
We point out that $p_{l, M}\left( \hat{\mu}' \right)$ can asymptotically be expressed without $l_{++}$ and $l_{--}$. 
Next, we consider the number of possible cycles with length $l$ and exactly $M$ edges that connect vertices with opposite spins, subsequently denoted by $a_{l, M}\left( \mu\right)$. For starters, we have
\begin{align*}
	2 l \cdot a_{l, M}\left( \mu\right) = 2 \binom{l}{M}\left( n \rho_{+} \right)_{l_{+}}  \left( n \rho_{-} \right)_{l_{-}} \left( d \left( d -1\right) \right)^l.
\end{align*}
This implies for $\hat{\mu}' \in \cA_{\hat\mu}$
\begin{align*}
	a_{l, M}\left( \hat{\mu}' \right) = \binom{l}{M} \frac{1}{ l} n^l 2^{-l} \left( d \left( d -1\right) \right)^l \bc{1 + o(1)}.
\end{align*}
Now, we are in a position to calculate the conditional expectation of the number of short cycles, that is
\begin{align*}
	\mathbb{E}\left[  C_l \left( \mathbb{G}^*\right) \vert  \cA_{\hat\mu} \right] &= \sum_{i = 0}^l  p_{l, M=i}\left(  \hat{\mu}\right) a_{l, M=i}\left(  \hat{\mu}\right) \bc{1 + o(1)} \\
	&\sim \sum_{i = 0, i \textrm{ even}}^l \ \left( \frac{2}{dn} \right)^l \left( \frac{e^{- \beta}}{ \left(1 +e^{-\beta}\right) }\right)^{l - M} \left( \frac{1}{ \left(1 +e^{-\beta}\right) } \right)^{M} \binom{l}{i} \frac{1}{ l} n^l 2^{-l} \left( d \left( d -1\right) \right)^l \\
	&=\frac{\left( d - 1\right)^l}{ l}\sum_{i = 0, i \textrm{ even}}^l  \binom{l}{i}   \left( \frac{e^{- \beta}}{1 + e^{- \beta}}\right)^{l - i} \left( \frac{1}{1 + e^{- \beta}}\right)^i  \\
	&= \frac{\left( d - 1\right)^l}{2 l}  \left(  \left( \frac{e^{- \beta}}{1 + e^{- \beta}} +\frac{1}{1 + e^{- \beta}} \right)^l +   \left( \frac{e^{- \beta}}{1 + e^{- \beta}} -\frac{1}{1 + e^{- \beta}} \right)^l  \right)\\
	&= \frac{\left( d - 1\right)^l}{2 l}  \left(  1+   \left( \frac{e^{- \beta} - 1}{1 + e^{- \beta}}  \right)^l  \right) =: \lambda^*.
\end{align*}
In order to establish \Prop~\ref{prop_cycle_planted} we next need to calculate the higher moments of the number of short cycles in $\G^*$. To this end, we consider $\mathbb{E}\left[  C_l \left( \mathbb{G}^*\right)^2 \vert \cA_{\hat\mu} \right]$ which can be interpreted as the expected number of ordered pairs of cycles in $\mathbb{G}$. We introduce two new random variables, namely $X'$ and $X''$. $X'$ denotes the number of ordered cycle pairs that are vertex-disjoint whereas $X''$ counts the ordered cycle pairs that have at least one vertex in common. This immediately brings us to
\begin{align*}
	\mathbb{E}\left[  C_l \left( \mathbb{G}^*\right)^2 \vert  \cA_{\hat\mu} \right] = \mathbb{E}\left[  X' \vert  \cA_{\hat\mu} \right] + \mathbb{E}\left[  X'' \vert  \cA_{\hat\mu} \right].
\end{align*}
Starting with $X'$ and adopting a corresponding definition of $p'_{ l,  M}$ and $a'_{l, M}$ - just now referring to two vertex-disjoint cycles - an analogue calculation to the one above yields
\begin{align*}
	p'_{ l,  M} \left( \hat{\mu} \right) \sim 
	\left( \frac{2}{dn} \right)^{2 l} \left( \frac{e^{- \beta}}{ \left(1 +e^{-\beta}\right) }\right)^{2 l - 2 M} \left( \frac{1}{ \left(1 +e^{-\beta}\right) } \right)^{2 M}
\end{align*}
and
\begin{align*}
	\left( 2 l \right)^2 \cdot a'_{l, M}\left( \mu\right) = 4 \left( \binom{l}{M} \right)^2 \left( n \rho_{+} \right)_{2 l_{+}}  \left( n \rho_{-} \right)_{2 l_{-}} \left( d \left( d -1\right) \right)^{2l}.
\end{align*}
Therefore, we arrive at
\begin{align*}
	\mathbb{E}\left[  X' \vert  \cA_{\hat\mu} \right] \sim \left( \lambda^* \right)^2. 
\end{align*}
All that remains to do is to show that $\mathbb{E}\left[  X'' \vert  \cA_{\hat\mu} \right]$ is asymptotically dominated by $\mathbb{E}\left[  X' \vert  \cA_{\hat\mu} \right]$. More precisely, we show that $\mathbb{E}\left[  X'' \vert  \cA_{\hat\mu} \right] = O \left( n^{- 1}\right)$ where we adopt an argument from \cite{Janson_2011} to our case. Whenever we have two cycles of length $l$ that have $k$ vertices in common, the number of shared vertices will exceed the number of shared edges by at least one. Put differently, the number of shared edges is at most $k - 1$. As a result of this insight we have
\begin{align*}
	a_{l, M}\left( \hat{\mu}' \right) = \Theta \left( n^{2 l - k}\right)  \hspace{3 em} \textrm{ and } \hspace{3 em} 	p_{l, M}\left( \hat{\mu}' \right) = O \left( n^{ -2 l + k - 1}\right)
\end{align*}
for any $k < l$ and $\hat{\mu}' \in \cA_{\hat\mu}$. Summing up over all $k \in \left[ l - 1 \right] $ yields the desired statement
\begin{align*}
	\mathbb{E}\left[  X'' \vert  \cA_{\hat\mu} \right] = O \left( n^{- 1}\right).
\end{align*}
This same argumentation can be extended to arbitrary higher moments $\mathbb{E}\left[  C_l \left( \mathbb{G}^*\right)^j \vert  \cA_{\hat\mu} \right]$ with $j \in \mathbb{N}$. Thus, the method of moments provides the desired statement.
\end{proof}

\begin{proof}[Proof of Proposition \ref{prop_cycle_planted}]
	The Proposition results from combining \Lem s~\ref{A_mu_high_prob_event} and \ref{lemma_cycle_planted_optimal_mu}. 
\end{proof}

\section{The First Moment/ Proof of Proposition \ref{prop_first_moment}}\label{section_first_moment}

In this section, we first focus on the so-called pairing model $\Gbold = \Gbold \bc{n, d}$. In pairing model, each of the $n$ initial nodes is represented by $d$ clones. Then, a perfect matching for these $d n$ clones is chosen uniformly at random. Finally, the clones are merged back into their initial vertex, such that each node in the original vertex set has degree $d$. By design, this setup allows for loops and double edges. If the graph does not contain either of them, we call the graph simple. Furthermore, we denote the event that a graph is simple by $\mathcal{S}$. The following result (which we will prove first) can be leveraged for showing Proposition \ref{prop_first_moment}.

\begin{proposition}\label{prop_first_moment_pairing_model}
	Assume that $0 < \beta < \bks$ and $d \geq 3$. Then we have
	\begin{equation*}
		\Erw \brk{Z_{\Gbold, \beta} } =  \exp \bc{O \bc{\frac{1}{n}}}  \sqrt{\frac{1 + e^{ \beta}}{ 2 + d  e^{\beta} - d }}  \exp \bc{n \bc{\bc{1 - d/2} \log \bc{2}  + d\log\bc{1 + e^{- \beta}}/2}}
	\end{equation*}
\end{proposition}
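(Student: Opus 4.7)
The plan is to expand $\Erw\brk{Z_{\Gbold,\beta}}$ as an explicit double sum by conditioning on the number of positive spins and on the edge-type statistic of the pairing model, convert it via Stirling's formula into a two-dimensional exponential sum, and then apply the sharp Laplace method of Theorem \ref{theoremGJR} to extract the leading asymptotic with error $\exp(O(1/n))$. To set up the sum, condition on a spin configuration $\sigma$ with $k$ positive and $n-k$ negative vertices. By the permutation symmetry among the half-edges within each spin class, the probability that the uniform perfect matching of the $dn$ clones produces $m_{++}$ monochromatic positive edges, $m_{--}$ monochromatic negative edges and $m_{+-} = dn/2 - m_{++} - m_{--}$ bichromatic edges has a closed form in terms of binomial coefficients and double factorials, with $m_{--}$ determined by $k$ and $m_{++}$ through the degree constraints $2m_{++}+m_{+-}=kd$ and $2m_{--}+m_{+-}=(n-k)d$. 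Multiplying by $\binom{n}{k}$ and by $e^{-\beta(m_{++}+m_{--})}$ produces the desired explicit double sum over the integers $k$ and $m_{++}$.

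Next, apply Stirling's formula \eqref{eq_stirling} and the double factorial identity \eqref{eq_df} to every factorial and double factorial, carefully retaining each $\sqrt{2\pi k}$ subexponential factor and collecting all $\exp(O(1/k))$ errors. In the rescaled variables $\rho_+ := k/n$ and $\mu_{ab} := 2m_{ab}/(dn)$ the sum reorganises as
\[
\Erw\brk{Z_{\Gbold,\beta}} = \exp\bc{O(1/n)} \sum_{(\rho_+,\mu_{++})\in\cQ} \frac{1}{\pi n\sqrt{2\mu_{++}\mu_{--}\mu_{+-}d}}\exp\bc{n\,\phi(\mu_{++},\rho_+)},
\]
where
\[
\phi(\mu_{++},\rho_+) = H(\rho) - \frac{d}{2}\bc{\KL{\mu}{\rho\otimes\rho} + \beta(1+2\mu_{++}-2\rho_+)}
\]
and $\cQ$ is the set of admissible pairs. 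This is precisely the expression invoked in the proof of Lemma \ref{A_mu_high_prob_event}.

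Finally, apply Theorem \ref{theoremGJR} with $N=r=2$, the lattice $\cL=\ZZ^2$ parametrising the integer pair $(k,m_{++})$ (so $\det(\cL)=1$) and polynomial prefactor $b_n = 1/n$. Lemma \ref{maximum_first_moment} guarantees that, for $\beta<\bks$, the function $\phi$ has a unique interior maximum at $(\hat\mu_{++},\hat\rho_+) = \bc{e^{-\beta}/(2(1+e^{-\beta})),\,1/2}$, where direct substitution gives $\phi(\hat\mu_{++},\hat\rho_+) = (1-d/2)\log 2 + (d/2)\log(1+e^{-\beta})$, matching the exponent in the statement. The Hessian entries at the maximum are listed in the proof of Lemma \ref{A_mu_high_prob_event}, and a direct determinant calculation, including the Jacobian from the change of variables $(k/n,m_{++}/n) \leftrightarrow (\rho_+,\mu_{++})$, combines with the $(2\pi n)^{r/2}$ factor from Theorem \ref{theoremGJR}, the $(\pi n)^{-1}$ prefactor inherited from Stirling, and the evaluation $1/\sqrt{2\hat\mu_{++}\hat\mu_{--}\hat\mu_{+-}d}$ to collapse into the claimed constant $\sqrt{(1+e^\beta)/(2+de^\beta-d)}$. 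The main obstacle is quantitative rather than qualitative: upgrading the familiar $\exp(o(n))$ Laplace asymptotic to the sharp $\exp(O(1/n))$ error demands that every $\sqrt{2\pi k}$ Stirling factor be tracked exactly and that all constants be simplified in tandem without absorbing anything into a subleading bin, which is precisely why the refined Theorem \ref{theoremGJR} of \cite{Greenhill_2010} is indispensable; the replica-symmetric hypothesis $\beta<\bks$ enters exactly in ensuring that the symmetric critical point is the true global maximum of $\phi$.
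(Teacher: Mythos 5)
Your proposal follows essentially the same route as the paper: the same combinatorial double sum over $(k,m_{++})$ (the paper imports it from \cite{Coja_2020} as \Lem~\ref{first_mom_starting_point}), the same Stirling reformulation into \eqref{first_moment_before_Laplace}, and the same application of \Thm~\ref{theoremGJR} at the symmetric maximum with the Hessian of \Lem~\ref{hessian_first_moment}. The only cosmetic difference is that you take the lattice to be $\ZZ^2$ in the $(k,m_{++})$ coordinates with $\det(\cL)=1$ and carry a Jacobian, whereas the paper works in the $(\rho_+,\mu_{++})$ coordinates with $\det(A_{\textrm{first}})=2/d$; these are equivalent bookkeeping conventions.
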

\subsection{Getting started}

Recall the definitions of $\mu(G, \sigma)$ and $\rho(G, \sigma)$ from \eqref{eq_def_mu_1}--\eqref{eq_def_rho}.
As a starting point for our first moment calculations, consider the following result due to \cite{Coja_2020} which encodes the combinatorial structure of the first moment of the partition function. Let $\cM_n = \cup_{\sigma \in \cbc{\pm 1}^{V_n}} \cM(\sigma)$ be the set of all conceivable distributions $\mu$.

\begin{lemma}[\Lem s~4.1 and 4.3 in \cite{Coja_2020}]\label{first_mom_starting_point}
We have
\begin{align*}
	\mathbb{E}\left[ Z_{\Gbold , \beta}   \right] 
	&=  \sum_{\mu \in \mathcal{M}_n} 	\binom{n}{\rho_+ n} \frac{\left( d n \mu_{++} - 1\right) !! \left( d n \mu_{--} - 1\right)!!  \left(d n \mu_{+-} \right)  !}{\left( d n  - 1\right) !!} 	\binom{dn \rho_+}{dn \mu_{++}} 	\binom{dn \rho_-}{dn \mu_{--}}\cdot \exp \left(- \beta \frac{d n}{2} \left( \mu_{++} + \mu_{--}\right)  \right).
\end{align*}
\end{lemma}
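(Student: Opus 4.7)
The plan is to prove the identity by a direct combinatorial expansion combined with a symmetry argument over the pairing model. First I would apply Fubini to write
\[
\mathbb{E}[Z_{\Gbold,\beta}] \;=\; \sum_{\sigma \in \{\pm 1\}^{V_n}} \mathbb{E}\bigl[\exp(-\beta\, \mathcal{H}_\Gbold(\sigma))\bigr],
\]
and observe that, with $\sigma$ fixed, the Hamiltonian depends on the random pairing only through the edge-type statistic $\mu=\mu(\Gbold,\sigma)$. Indeed $\mathcal{H}_\Gbold(\sigma)$ just counts monochromatic edges, which by definition of $\mu_{++},\mu_{--}$ equals $\tfrac{dn}{2}(\mu_{++}+\mu_{--})$. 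Consequently, the inner expectation decomposes as a sum over admissible $\mu \in \mathcal{M}(\sigma)$, where each contribution is the probability (under the uniform distribution on perfect matchings of the $dn$ half-edges) that the pairing realizes $\mu$, weighted by $\exp\!\bigl(-\beta\,\tfrac{dn}{2}(\mu_{++}+\mu_{--})\bigr)$.

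Next I would carry out the counting in the pairing model. For a fixed $\sigma$ with $\rho_+ n$ positive vertices (hence $dn\rho_+$ positive clones) I would build a matching of prescribed type $\mu$ in three steps: (i) choose which $dn\mu_{++}$ of the $dn\rho_+$ positive clones are paired internally, in $\binom{dn\rho_+}{dn\mu_{++}}$ ways, and match them among themselves in $(dn\mu_{++}-1)!!$ ways; (ii) do the analogous thing on the negative side, contributing $\binom{dn\rho_-}{dn\mu_{--}}(dn\mu_{--}-1)!!$; (iii) bijectively match the $dn\mu_{+-}$ leftover positive clones with the $dn\mu_{+-}$ leftover negative clones in $(dn\mu_{+-})!$ ways. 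Dividing by the total number $(dn-1)!!$ of pairings of $dn$ clones yields the conditional probability $\Pr[\mu(\Gbold,\sigma)=\mu\mid\sigma]$.

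To finish I would exploit the fact that this pairing count depends on $\sigma$ only through $\rho_+$, so that grouping the $2^n$ spin configurations by their positive-class fraction introduces exactly the binomial factor $\binom{n}{\rho_+ n}$. Plugging this in, combining with the Boltzmann weight, and reindexing the double sum over $\sigma$ and $\mu\in\mathcal{M}(\sigma)$ as a single sum over $\mu\in\mathcal{M}_n$ (where the symmetry and marginalization constraints baked into $\mathcal{M}_n$, namely $\mu_{++}+\mu_{+-}=\rho_+$, $\mu_{--}+\mu_{+-}=\rho_-$, $\mu_{+-}=\mu_{-+}$, and the integrality/parity of $dn\mu_{++},dn\mu_{--},dn\mu_{+-}$, silently guarantee that all factorials and double factorials are well-defined) produces the claimed identity verbatim. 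The only real obstacle is the bookkeeping of these constraints, which is already absorbed into the index set $\mathcal{M}_n$ as it appears in the statement; the rest is routine counting.
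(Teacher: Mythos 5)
Your derivation is correct: the Fubini step, the observation that $\cH_{\Gbold}(\sigma)=\frac{dn}{2}(\mu_{++}+\mu_{--})$, the three-step count of pairings of prescribed type divided by $(dn-1)!!$, and the grouping of the $2^n$ spin configurations by $\rho_+$ are exactly the standard configuration-model computation behind this identity. The paper itself does not prove the lemma but imports it from \cite{Coja_2020} (Lemmas 4.1 and 4.3), and your argument reproduces that derivation.
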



\subsection{Reformulation of the first moment}
Recall Stirling's formula \eqref{eq_stirling} and the identity for the double factorial from \eqref{eq_df}.
The next Lemma yields a simplified expression for the first moment which is  obtained by applying \eqref{eq_stirling} and \eqref{eq_df} to the factorials and binomial coefficients in Lemma \ref{first_mom_starting_point}. The proof follows \cite{Coja_2020}, but now explicitly accounting for smaller-order terms to yield an error term of order $O(\exp(1/n))$.
\begin{lemma}\label{reformulation_first_moment}
We have
\begin{align*}
	\mathbb{E}\left[ Z_{\Gbold, \beta}   \right] 
	&= \sum_{\mu \in \mathcal{M}_n}   \frac{\exp\left(n  \textrm{H} \left( \rho\right) - \frac{dn}{2}\left( D_\textrm{KL} (\mu \vert\vert \rho \otimes \rho) + \beta  \left( \mu_{++} + \mu_{--}\right)\right)  +  O \left( \frac{1}{n}\right)\right)}{\pi n\sqrt{ 2 \mu_{++}\mu_{--} \mu_{+-} d}} .
	\end{align*}
\end{lemma}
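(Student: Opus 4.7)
The strategy is to take the explicit combinatorial expression from \Lem~\ref{first_mom_starting_point} and reduce each factorial, double factorial, and binomial coefficient to an exponential factor times a polynomial correction, using only the double-factorial identity \eqref{eq_df} together with Stirling's formula \eqref{eq_stirling}. First I would invoke \eqref{eq_df} to rewrite each of the three double factorials $(dn\mu_{++}-1)!!$, $(dn\mu_{--}-1)!!$ and $(dn-1)!!$ as ratios of ordinary factorials; this step is legitimate because the integrality conditions defining $\cM_n$ force each of these arguments to be even. At that point the summand is expressed purely in terms of ordinary factorials, the two inner binomials, the outer binomial $\binom{n}{\rho_+ n}$, and the explicit Boltzmann weight.

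Next I would apply Stirling's formula to every factorial now present. Each application produces a relative error of the form $\exp(O(1/k))$ where $k$ is its argument. Summands with some $\mu_{\pm\pm}$ or $\rho_\pm$ close to zero are handled separately: integrality forces any nonzero entry to be $\Omega(1/n)$, so the corresponding per-term errors are at worst $\exp(O(1))$ and are dominated by the $D_{\mathrm{KL}}$ contribution that will later materialise in the exponent. On the regime where every argument is $\Theta(n)$, all of the $\exp(O(1/k))$ errors combine into a single multiplicative $\exp(O(1/n))$, which is exactly what is absorbed by the $O(1/n)$ appearing inside the exponent in the statement.

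What is left is bookkeeping. Collecting the $\sqrt{2\pi k}$ terms produced by Stirling in numerator and denominator and simplifying yields precisely the polynomial prefactor $(\pi n\sqrt{2 d\mu_{++}\mu_{--}\mu_{+-}})^{-1}$. On the exponential side, the Boltzmann weight is already in the desired form; the $(k/e)^k$ contribution of the outer binomial $\binom{n}{\rho_+ n}$ produces $\exp(nH(\rho))$; and the $(k/e)^k$ factors coming from the two inner binomials, the three double factorials and $(dn\mu_{+-})!$ recombine, after expanding $D_{\mathrm{KL}}(\mu\|\rho\otimes\rho) = \sum_{s,t\in\{+,-\}} \mu_{st}\log(\mu_{st}/(\rho_s \rho_t))$, into $\exp(-\frac{dn}{2}D_{\mathrm{KL}}(\mu\|\rho\otimes\rho))$. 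The only real obstacle is this accounting: making sure every $\sqrt{2\pi}$ ends up in the right place in the prefactor and that no residual linear-in-$n$ term survives after the $(k/e)^k$ factors are regrouped into $n H(\rho)$ and $-\frac{dn}{2} D_{\mathrm{KL}}(\mu \| \rho \otimes \rho)$. No probabilistic input beyond \Lem~\ref{first_mom_starting_point} is required.
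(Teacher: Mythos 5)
Your proposal matches the paper's proof essentially step for step: convert the double factorials via \eqref{eq_df}, apply Stirling's formula \eqref{eq_stirling} to every factorial, and regroup the $(k/\eul)^k$ factors into $n\mathrm{H}(\rho)$ and $-\tfrac{dn}{2}D_{\mathrm{KL}}(\mu\|\rho\otimes\rho)$ while the $\sqrt{2\pi k}$ factors assemble into the prefactor $(\pi n\sqrt{2d\mu_{++}\mu_{--}\mu_{+-}})^{-1}$. Your explicit remark about summands with $\mu$-entries of order $1/n$, where Stirling only gives an $\exp(O(1))$ relative error, is a point the paper's proof glosses over, and your resolution (these terms are exponentially subdominant and only a crude upper bound is needed there for the later Laplace-method step) is the right way to make the argument airtight.
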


\begin{proof}
Starting with \Lem~\ref{first_mom_starting_point} and considering the fraction of factorials first, we find
	\begin{align} \label{eq_firstmoment_1}
		&\frac{\left( d n \mu_{++} - 1\right) !! \left( d n \mu_{--} - 1\right)!!  \left(d n \mu_{+-} \right)  !}{\left( d n  - 1\right) !!}
		= \frac{\left( d n \mu_{++} \right) ! \left( d n \mu_{--} \right) ! \left(d n \mu_{+-} \right) !   \left( \frac{d n}{2}\right) !\cdot  2^{\frac{d n}{2}}}{\left( d n \right)!   \left( \frac{d n \mu_{++}}{2}\right) !  \left( \frac{d n \mu_{--}}{2}\right) ! \cdot 2^{\frac{d n}{2} \left( \mu_{++} + \mu_{--}\right) }} \notag \\
		&= \exp \left( O \left( \frac{1}{n} \right) \right) \sqrt{2 \pi  \frac{dn \mu_{++} dn \mu_{--} dn \mu_{+-} \frac{d n}{2}}{dn \frac{d n \mu_{++}}{2}\frac{d n \mu_{--}}{2}}} \cdot 2^{\frac{d n}{2} \left( 1 -\mu_{++} - \mu_{--}\right)} \cdot  \left( \frac{dn}{e}\right)^{dn \left(  \mu_{+-}  - \frac{1}{2} + \frac{\mu_{++}}{2} + \frac{\mu_{--}}{2}\right) } \notag\\
		&\qquad \qquad  \cdot 2^{  dn \left( \frac{\mu_{++}}{2} + \frac{\mu_{--}}{2} - \frac{1}{2} \right)}\cdot  \mu_{++}^{dn \left(\mu_{++} - \frac{\mu_{++}}{2} \right) } \cdot  \mu_{--}^{dn \left(\mu_{--} - \frac{\mu_{--}}{2} \right)} \cdot  \mu_{+-}^{dn \mu_{+-}} \notag\\
		&= \exp \left( O \left( \frac{1}{n} \right) \right) 2 \sqrt{ \pi d n \mu_{+-} } \cdot  \mu_{++}^{dn \frac{\mu_{++}}{2} } \cdot  \mu_{--}^{dn  \frac{\mu_{--}}{2} } \cdot  \mu_{+-}^{dn \mu_{+-}} \notag\\
		&=2 \exp \left( O \left( \frac{1}{n}\right) + \frac{1}{2}  \log \left( \pi d n \mu_{+-}\right) + \underbrace{dn \frac{\mu_{++}}{2} \log\left( \mu_{++} \right)  +  dn \frac{\mu_{--}}{2} \log\left( \mu_{--} \right) + dn \mu_{+-} \log \left( \mu_{+-}\right)}_{= -\frac{dn}{2} \textrm{H}\left( \mu \right)}\right) \notag\\
		&= \exp \left( -\frac{dn}{2} \textrm{H}\left( \mu \right) +  \frac{1}{2}  \log \left( n\right)  + \frac{1}{2}  \log \left(4 \pi d  \mu_{+-}\right) +  O \left( \frac{1}{n}\right)\right) 
	\end{align}
where we used \eqref{eq_df} for the first equality and Stirling's formula \eqref{eq_stirling} for the second equality. Similarly, we rearrange the second term of interest:
	\begin{align*}
		\binom{dn \rho_+}{dn \mu_{++}} 	\binom{dn \rho_-}{dn \mu_{--}} = \frac{\left( d n \rho_+ \right)! \left( d n \rho_- \right)!   }{\left( d n \mu_{++} \right)!  \left( d n \mu_{--} \right)! \left( d n \left(\rho_+ - \mu_{++} \right) \right)!  \left( d n \left(\rho_- - \mu_{--} \right) \right)!} = \frac{\left( d n \rho_+ \right)! \left( d n \rho_- \right)!   }{\left( d n \mu_{++} \right)!  \left( d n \mu_{--} \right)! \left( \left( dn \mu_{+-}\right)!\right) ^2}.
	\end{align*}
Another application of \eqref{eq_stirling} yields
	\begin{align} \label{eq_firstmoment_2}
	    &\binom{dn \rho_+}{dn \mu_{++}} 	\binom{dn \rho_-}{dn \mu_{--}} \notag \\
		&=\exp \left( O \left( \frac{1}{n} \right) \right) \frac{1}{2 \pi d n} \sqrt{\frac{\rho_+ \rho_-}{\mu_{++} \mu_{--} \mu_{+-}^2}} \left( \frac{d n}{e}\right) ^{dn \left( \rho_+ + \rho_- - \mu_{++} - \mu_{--} - 2 \mu_{+-}\right) } \notag\\
		&\qquad \qquad \cdot \rho_+^{d n \rho_+}	\cdot \rho_-^{d n \rho_-} \cdot \mu_{++}^{-dn \mu_{++}} \cdot \mu_{--}^{-dn \mu_{--}} \cdot \mu_{+-}^{- 2dn \mu_{+-}} \notag\\
		&= \exp \left( d n \rho_+  \log \left( \rho_+\right) + d n \rho_-  \log \left( \rho_-\right) - dn \mu_{++} \log \left( \mu_{++} \right)  - dn \mu_{--} \log \left( \mu_{--} \right)  - 2 dn \mu_{+-} \log \left( \mu_{+-} \right)   \right)\notag \\
		&\qquad \qquad \cdot \exp \left( - \log \left( n\right) + \frac{1}{2} \log \left( \frac{\rho_+ \rho_-}{\mu_{++} \mu_{--} \mu_{+-}^2 4 \pi^2 d^2}\right)+ O \left( \frac{1}{n} \right) \right) \notag\\
		&= \exp\left(dn \left( \textrm{H}\left( \mu\right)  - \textrm{H}\left( \rho\right)\right)   - \log \left( n\right) + \frac{1}{2} \log \left( \frac{\rho_+ \rho_-}{\mu_{++} \mu_{--} \mu_{+-}^2 4 \pi^2 d^2}\right) + O \left( \frac{1}{n} \right) \right) 
	\end{align}
Combining \eqref{eq_firstmoment_1} and \eqref{eq_firstmoment_2} and denoting by we have
\begin{align*}
	&\frac{\left( d n \mu_{++} - 1\right) !! \left( d n \mu_{--} - 1\right)!!  \left(d n \mu_{+-} \right)  !}{\left( d n  - 1\right) !!} 
		\binom{dn \rho_+}{dn \mu_{++}} 	\binom{dn \rho_-}{dn \mu_{--}}  \\
	&=\exp\left(\frac{dn}{2} \left( \textrm{H}\left( \mu\right)  - \textrm{H}\left( \rho \otimes \rho \right)\right)   - \frac{1}{2} \log \left( n\right) + \frac{1}{2} \log \left( \frac{\rho_+ \rho_-}{\mu_{++} \mu_{--} \mu_{+-}  \pi d}\right)+  O \left( \frac{1}{n}\right)\right) \\
	&=\exp\left(- \frac{dn}{2} D_\textrm{KL} (\mu \vert\vert \rho \otimes \rho)   - \frac{1}{2} \log \left( n\right) + \frac{1}{2} \log \left( \frac{\rho_+ \rho_-}{\mu_{++} \mu_{--} \mu_{+-}  \pi d}\right)+  O \left( \frac{1}{n}\right)\right) 
\end{align*}
As an immediate consequence, the first moment  from \Lem~\ref{first_mom_starting_point} can be expressed as
	\begin{align*}
		\mathbb{E}\left[ Z_{\Gbold, \beta}   \right] = \sum_{\mu \in \mathcal{M}_n} \binom{n}{\rho_+ n} \sqrt{\frac{\rho_+ \rho_-}{\mu_{++} \mu_{--} \mu_{+-}  \pi d n }}\exp\left(- \frac{dn}{2}\left( D_\textrm{KL} (\mu \vert\vert \rho \otimes \rho) + \beta  \left( \mu_{++} + \mu_{--}\right)\right) +  O \left( \frac{1}{n}\right) \right)
\end{align*}
where $\cM_n$ is again the set of all conceivable distributions $\mu$. A short auxiliary calculation using Stirling's formula \eqref{eq_stirling} yields
\begin{align*}
	\binom{n}{\rho_+ n} &= \frac{n!}{\left( \rho_+ n\right)! \left( \rho_- n\right)! } = \frac{1}{\sqrt{2 \pi n \rho_+ \rho_-}} \left( \frac{n}{e}\right) ^n \left( \frac{\rho_+ n}{e}\right) ^{- \rho_+ n}\left( \frac{\rho_- n}{e}\right) ^{- \rho_- n} \exp \left(O \left(\frac{1}{n} \right)  \right) \\
	&= \frac{\rho_+^{- \rho_+ n} \rho_-^{- \rho_- n}}{\sqrt{2 \pi n \rho_+ \rho_-}} \exp \left(O \left(\frac{1}{n} \right)  \right)  
	= \exp\left( n \textrm{H} \left( \rho\right) - \frac{1}{2} \log\left( n \right) - \frac{1}{2} \log\left( 2 \pi  \rho_+ \rho_-\right) +O \left(\frac{1}{n} \right) \right) 
\end{align*}
which enables us to state
\begin{align*}
	\mathbb{E}\left[ Z_{\Gbold, \beta}   \right] 
	&= \sum_{\mu \in \mathcal{M}_n}   \frac{\exp\left(n  \textrm{H} \left( \rho\right) - \frac{dn}{2}\left( D_\textrm{KL} (\mu \vert\vert \rho \otimes \rho) + \beta  \left( \mu_{++} + \mu_{--}\right)\right)  +  O \left( \frac{1}{n}\right)\right)}{\pi n\sqrt{ 2 \mu_{++}\mu_{--} \mu_{+-} d}}
\end{align*}
as claimed.
\end{proof}

Revisiting the setup of our model, we see that all values of $\mu$ and $\rho$ are completely determined by the choice of $\mu_{++}$ and $\rho_{+}$. Exploiting the fact that $\rho$ is a probability distribution, we have
\begin{align*}
	\rho_{-} = 1 - \rho_{+}.
\end{align*}
A similar argument can be made for $\mu$. Since edges are by definition undirected in our setup, we have $\mu_{+-} = \mu_{-+}$. Keeping in mind that $\mu$ is also a probability measure, the missing weights of $\mu$ can be deduced from $\mu_{++}$ and $\rho_{+}$ by the equations
\begin{align*}
	\mu_{+-} &= \mu_{-+} = \rho_+ - \mu_{++}\\
	\mu_{--} &= 1 - 2 \left( \rho_+ - \mu_{++}\right) - \mu_{++} = 1 + \mu_{++} - 2 \rho_+.
\end{align*}
Substituting the above into \Lem~\ref{reformulation_first_moment} and some simplifications give us
\begin{align}\label{first_moment_before_Laplace}
	\mathbb{E}\left[ Z_{\Gbold, \beta}   \right] 
	=  \exp\left(O \left( \frac{1}{n}\right)\right)  \cdot \sum_{\left( \rho_+, \mu_{++}\right) \in \mathcal{Q}}  \frac{1}{\pi n\sqrt{ 2 \mu_{++}\mu_{--} \mu_{+-} d}} \exp\left(  n  \psi\left( \mu_{++}, \rho_+\right)\right).
\end{align}
where
\begin{align*}
	\psi\left( \mu_{++}, \rho_+\right)  :=  \textrm{H} \left( \rho\right) - \frac{d}{2}\left( D_\textrm{KL} (\mu \vert\vert \rho \otimes \rho) + \beta  \left(1 +2 \mu_{++} -2 \rho_+ \right)\right)
\end{align*}
and $\mathcal{Q}$ is the set of all conceivable pairs $\bc{ \rho_+, \mu_{++}}$. The KL-divergence can also be expressed just in terms of $\mu_{++}$ and $\rho_{+}$, as the following calculation shows.
\begin{align*}
	D_\textrm{KL} \left( \mu \vert\vert \rho \otimes \rho\right)  = &\mu_{++} \log \left(\frac{\mu_{++}}{\rho_+^2} \right) + \mu_{--} \log \left(\frac{\mu_{--}}{\rho_-^2} \right) + 2 \mu_{+-} \log \left(\frac{\mu_{+-}}{\rho_+ \rho_-} \right) \\
	= &\mu_{++} \log \left(\frac{\mu_{++}}{\rho_+^2} \right) + \left(1 + \mu_{++} - 2 \rho_+ \right)  \log \left(\frac{1 + \mu_{++} - 2 \rho_+ }{\left( 1 -\rho_+\right)^2} \right) + 2 \left( \rho_+ - \mu_{++}\right)  \log \left(\frac{\rho_+ - \mu_{++}}{\rho_+ \left( 1 -\rho_+\right) } \right)\\
	= &\mu_{++} \log\left( \mu_{++}\right)  + \left(1 + \mu_{++} - 2 \rho_+ \right)  \log \left(1 + \mu_{++} - 2 \rho_+  \right) \\
	&- 2 \left(1 -  \rho_+ \right) \log \left( 1 - \rho_+\right)+  2 \left( \rho_+ - \mu_{++}\right) \log \left( \rho_+ -\mu_{++}\right) - 2  \rho_+  \log\left( \rho_+\right).
\end{align*}
Having effectively reduced the number of involved variables, we now can move on to apply the Laplace method as stated in Theorem $2.3$ in \cite{Greenhill_2010}.

\subsection{Application of the Laplace method to the first moment}

Before we can apply the Laplace method to the expression for the first moment in \eqref{first_moment_before_Laplace}, we need some preliminary work. To be precise, we need to determine the unique maximum $\left( \hat{\mu}_{++}, \hat{\rho}_{+}\right)$ of $\psi$ on the set $\mathcal{Q}$ and evaluate the Hessian at this point. To this end, consider 
\begin{align} \label{eq_def_hat_rho}
    \hat{\rho}_{+} =\hat{\rho}_{-} = \frac{1}{2},
\end{align}
i.e. balanced number of vertices with positive and negative spins. Moreover, let
\begin{align} \label{eq_def_hat_mu}
	\hat{\mu}_{++} = \hat{\mu}_{--} = \frac{e^{- \beta}}{2 \left( 1 + e^{- \beta} \right) } \hspace{3 em} \textrm{and} \hspace{3 em} 	\hat{\mu}_{+-} = \hat{\mu}_{-+} = \frac{1}{2 \left( 1 + e^{- \beta} \right) }.
\end{align}
We will see in due course  in \Lem~\ref{maximum_first_moment} that $\left( \hat{\mu}_{++}, \hat{\rho}_{+}\right)$ indeed constitutes the unique maximum of $\psi$. Let us first calculate partial derivatives and establish the Hessian of $\psi$ at $\left( \hat{\mu}_{++}, \hat{\rho}_{+}\right)$.



\begin{lemma}[Hessian for the first moment]\label{hessian_first_moment}
We have
\begin{align*}
	\det\left( - \textrm{Hes}_\psi \left( \frac{1}{2}, \frac{e^{-\beta}}{2 \left(1 + e^{- \beta}\right) } \right) \right) 
	= 4 d \left( 1 + e^{- \beta}\right)^2  e^\beta \left(2 + d \left( e^{\beta} - 1\right)  \right).
\end{align*}
\end{lemma}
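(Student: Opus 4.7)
The goal is to compute the $2\times 2$ determinant $\det(-\mathrm{Hes}_\psi)$ at the stationary point $(\hat\rho_+, \hat\mu_{++}) = (1/2,\,e^{-\beta}/(2(1+e^{-\beta})))$, where $\psi(\mu_{++},\rho_+) = H(\rho) - \frac{d}{2}(D_{\mathrm{KL}}(\mu\|\rho\otimes\rho) + \beta(1+2\mu_{++}-2\rho_+))$. My plan is to differentiate the explicit closed form of $D_{\mathrm{KL}}$ displayed just above the lemma statement, evaluate all three second partials at the maximizer, and then expand the $2\times 2$ determinant by hand. The linear-in-$\beta$ term in $\psi$ contributes nothing past first derivatives, so the Hessian depends only on $H$ and on $D_{\mathrm{KL}}$.

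For the first partials, using $\mu_{+-} = \rho_+-\mu_{++}$ and $\mu_{--} = 1+\mu_{++}-2\rho_+$, I would obtain
\[
\partial_{\mu_{++}} D_{\mathrm{KL}} = \log\!\frac{\mu_{++}\mu_{--}}{\mu_{+-}^2}, \qquad \partial_{\rho_+} D_{\mathrm{KL}} = 2\log\!\frac{\rho_-\mu_{+-}}{\rho_+\mu_{--}}, \qquad \partial_{\rho_+} H = \log(\rho_-/\rho_+),
\]
which vanish at $(\hat\mu_{++},\hat\rho_+)$ in a way consistent with this being a critical point. Differentiating once more yields
\[
\partial^2_{\mu_{++}} D_{\mathrm{KL}} = \tfrac{1}{\mu_{++}}+\tfrac{1}{\mu_{--}}+\tfrac{2}{\mu_{+-}}, \quad \partial^2_{\rho_+} D_{\mathrm{KL}} = -\tfrac{2}{\rho_+}-\tfrac{2}{\rho_-}+\tfrac{4}{\mu_{--}}+\tfrac{2}{\mu_{+-}}, \quad \partial^2_{\mu_{++}\rho_+} D_{\mathrm{KL}} = -\tfrac{2}{\mu_{--}}-\tfrac{2}{\mu_{+-}},
\]
together with $\partial^2_{\rho_+} H = -1/\rho_+-1/\rho_-$. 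Substituting $\hat\mu_{++}=\hat\mu_{--}=\frac{e^{-\beta}}{2(1+e^{-\beta})}$, $\hat\mu_{+-}=\frac{1}{2(1+e^{-\beta})}$, $\hat\rho_\pm=1/2$, and collecting, I expect to recover exactly the three values already quoted in the proof of Lemma~\ref{A_mu_high_prob_event}:
\[
\psi_{\mu\mu} = -2d(1+e^{-\beta})^2 e^\beta,\qquad \psi_{\mu\rho} = 2d(1+e^{-\beta})^2 e^\beta,\qquad \psi_{\rho\rho} = -4 - 2d(1+e^{-\beta}+2e^\beta).
\]

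The main (minor) obstacle is the final algebraic simplification. Since $\det(-\mathrm{Hes}_\psi) = \psi_{\mu\mu}\psi_{\rho\rho} - \psi_{\mu\rho}^2$, I would factor out $2d(1+e^{-\beta})^2 e^\beta$ from both $\psi_{\mu\mu}$ and $\psi_{\mu\rho}$ to obtain
\[
\det(-\mathrm{Hes}_\psi) = 4d(1+e^{-\beta})^2 e^\beta\Bigl\{2 + d(1+e^{-\beta}+2e^\beta) - d(1+e^{-\beta})^2 e^\beta\Bigr\}.
\]
The key identity that makes things collapse is $(1+e^{-\beta})^2 e^\beta = e^\beta + 2 + e^{-\beta}$, so that $(1+e^{-\beta}+2e^\beta) - (1+e^{-\beta})^2 e^\beta = e^\beta - 1$. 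Plugging this in leaves $\det(-\mathrm{Hes}_\psi) = 4d(1+e^{-\beta})^2 e^\beta\bigl(2 + d(e^\beta - 1)\bigr)$, which is the claim. Apart from care with signs and the one cancellation just highlighted, the proof is a direct computation that can be written in a few lines once the second-derivative formulas above are in hand.
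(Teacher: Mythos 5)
Your proposal is correct and follows essentially the same route as the paper: compute the second partials of $H(\rho)$ and $D_{\mathrm{KL}}(\mu\|\rho\otimes\rho)$ (your expressions in terms of $\mu_{++},\mu_{--},\mu_{+-},\rho_\pm$ are just the paper's formulas rewritten via $\mu_{--}=1+\mu_{++}-2\rho_+$ and $\mu_{+-}=\rho_+-\mu_{++}$), evaluate at the critical point to get the three Hessian entries quoted, and expand the $2\times2$ determinant using the cancellation $(1+e^{-\beta})^2e^{\beta}=e^{\beta}+2+e^{-\beta}$. All intermediate values and the final factorization agree with the paper's computation.
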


\begin{proof}
Let us get started simple and state the partial derivatives of the Kullback-Leibler divergence from \eqref{first_moment_before_Laplace} with respect to $\mu_{++}$ and $\rho_+$.
\begin{align*}
	\frac{\partial D_\textrm{KL} \left( \mu \vert\vert \rho \otimes \rho\right)}{\partial \mu_{++}}  
	&= \log \left( \mu_{++}\right)  + \log \left(1 + \mu_{++} - 2 \rho_+  \right)   - 2 \log \left( \rho_+ -\mu_{++}\right) \\
	\frac{\partial^2 D_\textrm{KL} \left( \mu \vert\vert \rho \otimes \rho\right)}{\partial \mu_{++}^2}  &= \frac{1}{\mu_{++}} + \frac{1}{1 + \mu_{++} - 2 \rho_+} + \frac{2}{\rho_+ -\mu_{++}}\\
	\frac{\partial D_\textrm{KL} \left( \mu \vert\vert \rho \otimes \rho\right) }{\partial \rho_{+}} 
	&= - 2 \log \left(1 + \mu_{++} - 2 \rho_+  \right) +2 \log \left( 1 - \rho_+\right)  +2  \log \left( \rho_+ -\mu_{++}\right) - 2 \log\left( \rho_+\right)\\
	\frac{\partial^2 D_\textrm{KL} \left( \mu \vert\vert \rho \otimes \rho\right) }{\partial \rho_{+}^2} &= \frac{4}{1 + \mu_{++} - 2 \rho_+ } - \frac{2}{1 - \rho_+} + \frac{2}{\rho_+ - \mu_{++}} - \frac{2}{\rho_+}.
\end{align*}
Furthermore, for the entropy we recall
\begin{align*}
	\frac{\partial \textrm{H} \left( \rho_+\right)}{\partial \rho_+} = \log\left(1 - \rho_+ \right) - \log\left(\rho_+ \right)  \hspace{2 em} \textrm{and} \hspace{2 em} \frac{\partial^2 \textrm{H} \left( \rho_+\right)}{\partial \rho_+^2} = - \frac{1}{1 - \rho_+} - \frac{1}{\rho_+}.
\end{align*}
Keeping these auxiliary calculations in mind, the first derivatives of $\psi$ turn out to be
\begin{align*}
	\frac{\partial \psi\left( \mu_{++}, \rho_+\right)}{\partial \mu_{++}}  &=  -\frac{d}{2} \left(\log \left( \mu_{++}\right)  + \log \left(1 + \mu_{++} - 2 \rho_+  \right)   - 2 \log \left( \rho_+ -\mu_{++}\right) + 2 \beta   \right)\\
	\frac{\partial \psi\left( \mu_{++}, \rho_+\right)}{\partial \rho_+}  &=  \log\left(1 - \rho_+ \right) - \log\left(\rho_+ \right)- d \left( -  \log \left(1 + \mu_{++} -  2 \rho_+  \right) + \log \left( 1 - \rho_+\right) +  \log \left( \rho_+ -\mu_{++}\right) -  \log\left( \rho_+\right) - \beta \right) 
\end{align*}
while the second derivatives of $\psi$ are given by
\begin{align*}
	\frac{\partial^2 \psi\left( \mu_{++}, \rho_+\right)}{\partial \mu_{++}^2}  &=  - \frac{d}{2}\left( \frac{1}{\mu_{++}} + \frac{1}{1 + \mu_{++} - 2 \rho_+} + \frac{2}{\rho_+ -\mu_{++}}\right)\\
	\frac{\partial^2 \psi\left( \mu_{++}, \rho_+\right)} {\partial \mu_{++}\partial \rho_+}&=\frac{\partial^2 \psi\left( \mu_{++}, \rho_+\right)} {\partial \rho_+\partial \mu_{++}} = -\frac{d}{2} \left( - \frac{2}{1 + \mu_{++} - 2 \rho_+}- 2 \frac{1}{\rho_+ - \mu_{++}}\right)=  \frac{d \left(  1  -  \rho_+\right) }{\left( 1 + \mu_{++} - 2 \rho_+\right) \left( \rho_+ - \mu_{++}\right)}\\
	\frac{\partial^2 \psi\left( \mu_{++}, \rho_+\right)}{\partial \rho_+^2} &=  - \frac{1}{1 - \rho_+} - \frac{1}{\rho_+} - d\left(\frac{2}{1 + \mu_{++} - 2 \rho_+ } - \frac{1}{1 - \rho_+} + \frac{1}{\rho_+ - \mu_{++}} - \frac{1}{\rho_+} \right).
\end{align*}
With the above at hand, the entries of the Hessian turn out to be
\begin{align}
	\frac{\partial^2 \psi}{\partial \mu_{++}^2}  \left( \frac{1}{2}, \frac{e^{-\beta}}{2 \left(1 + e^{- \beta} \right)}\right)  &=  - \frac{d}{2}\left( \frac{1}{ \frac{e^{-\beta}}{2 \left(1 + e^{- \beta} \right)}} + \frac{1}{1 +  \frac{e^{-\beta}}{2 \left(1 + e^{- \beta} \right)} - 1} + \frac{2}{\frac{1}{2} - \frac{e^{-\beta}}{2 \left(1 + e^{- \beta} \right)}}\right) \notag \\
	&= - 2d \left( \frac{1 + e^{- \beta} + e^{- \beta} + e^{-2 \beta}}{e^{- \beta}}\right)  = -2 d \left( 1 + e^{- \beta}\right)^2  e^\beta < 0 \label{first_principal_minor}
\end{align}
and
\begin{align*}
	\frac{\partial^2 \psi} {\partial \mu_{++}\partial \rho_+}\left( \frac{1}{2}, \frac{e^{-\beta}}{2 \left(1 + e^{- \beta} \right)}\right)&=\frac{\partial^2 \psi} {\partial \rho_+\partial \mu_{++}}\left( \frac{1}{2}, \frac{e^{-\beta}}{2 \left(1 + e^{- \beta} \right)}\right)  =  \frac{ \frac{d}{2} }{ \frac{e^{-\beta}}{2 \left(1 + e^{- \beta} \right)} \left( \frac{1}{2} -  \frac{e^{-\beta}}{2 \left(1 + e^{- \beta} \right)}\right)}\\
    &= \frac{ 2 d }{ \frac{e^{-\beta}}{ 1 + e^{- \beta} } \cdot \frac{1}{ 1 + e^{- \beta} }} = 2 d \left( 1 + e^{- \beta}\right)^2 e^\beta 
\end{align*}
and
\begin{align*}
	\frac{\partial^2 \psi}{\partial \rho_+^2}\left( \frac{1}{2}, \frac{e^{-\beta}}{2 \left(1 + e^{- \beta} \right)}\right) &=  - 2 - 2 - d\left(\frac{2}{ \frac{e^{-\beta}}{2 \left(1 + e^{- \beta} \right)}  } - 2 + \frac{1}{\frac{1}{2 \left(1 + e^{- \beta} \right)}} - 2 \right)\\
	&= -4 -d \left( \frac{4 \left(1 + e^{- \beta} \right)}{e^{- \beta}} -2 + 2 e^{- \beta}\right)
	= -4 -2 d \left(1 + e^{-\beta} + 2 e^\beta \right).
\end{align*}
Combining the above, the determinant of the Hessian at $\left( \hat{\mu}_{++}, \hat{\rho}_{+}\right)$ is given by
\begin{align}
	\det\left( - \textrm{Hes}_\psi \left( \frac{1}{2}, \frac{e^{-\beta}}{2 \left(1 + e^{- \beta}\right) } \right) \right) 
	&=  8 d \left( 1 + e^{- \beta}\right)^2  e^\beta  +4 d^2 \left(1 + e^{-\beta} + 2 e^\beta \right) \left( 1 + e^{- \beta}\right)^2  e^\beta - 4 d^2 \left( 1 + e^{- \beta}\right)^4 e^{2\beta} \notag \\
	&= 8 d \left( 1 + e^{- \beta}\right)^2  e^\beta  +4  d^2 \left( 1 + e^{- \beta}\right)^2  e^\beta \underbrace{\left( 1 +  e^{- \beta} + 2 e^\beta -   e^{\beta} - 2 - e^{- \beta} \right)}_{= e^\beta -1} \notag\\
	&= 4 d \left( 1 + e^{- \beta}\right)^2  e^\beta \left(2 + d \left( e^{\beta} - 1\right)  \right) >0.\label{second_principal_minor}
\end{align}
closing the proof of the lemma.
\end{proof}

With the partial derivatives in place, we can proceed to establish that the unique maximum of $\psi$ is indeed at $\left( \hat{\mu}_{++}, \hat{\rho}_{+}\right)$.

\begin{lemma}[Maximum for the First Moment Calculation]\label{maximum_first_moment}
With the definitions of $\hat{\rho}_{+}$ and $\hat{\mu}_{++}$ from \eqref{eq_def_hat_rho} and \eqref{eq_def_hat_mu} we have
\begin{align*}
	\arg \max_ {\left( \mu_{++}, \rho_{+} \right) \in \mathcal{Q}} \psi \left( \mu_{++}, \rho_{+} \right) =  \left( \hat{\mu}_{++}, \hat{\rho}_{+}\right)
\end{align*}
\end{lemma}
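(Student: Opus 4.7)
The plan is to verify stationarity of the candidate, reduce the two-variable optimization to one dimension via strict concavity of $\psi$ in $\mu_{++}$, locate the remaining critical point at $\rho_+ = \tfrac12$ through spin-flip symmetry, and conclude global uniqueness by a scalar monotonicity argument.

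Substituting $(\hat{\mu}_{++}, \hat{\rho}_+)$ into the first-derivative formulas derived in the proof of \Lem~\ref{hessian_first_moment}, the stationarity condition $\partial_{\mu_{++}}\psi = 0$ reduces to $\log\bc{\hat{\mu}_{++}\hat{\mu}_{--}/\hat{\mu}_{+-}^2} + 2\beta = 0$, which holds because $\hat{\mu}_{++} = \hat{\mu}_{--}$ and $\hat{\mu}_{++}/\hat{\mu}_{+-} = e^{-\beta}$; the identity $\partial_{\rho_+}\psi\vert_{(\hat{\mu}_{++},\hat{\rho}_+)} = 0$ is checked analogously. The explicit second derivative $\partial^2_{\mu_{++}}\psi = -\tfrac{d}{2}\bc{\frac{1}{\mu_{++}} + \frac{1}{\mu_{--}} + \frac{2}{\mu_{+-}}}$ is strictly negative throughout the interior of $\mathcal{Q}$, so for every $\rho_+ \in (0,1)$ the slice $\mu_{++}\mapsto \psi(\mu_{++},\rho_+)$ is strictly concave with a unique maximizer $\mu_{++}^*(\rho_+)$, characterized by the first-order condition $\mu_{++}\mu_{--} = e^{-2\beta}\mu_{+-}^2$, obtained as the feasible root of a quadratic in $\mu_{+-}$.

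Set $f(\rho_+) := \psi(\mu_{++}^*(\rho_+),\rho_+)$. The spin-flip symmetry $\psi(\mu_{++},\rho_+) = \psi(1+\mu_{++}-2\rho_+, 1-\rho_+)$, obtained by exchanging the labels $+$ and $-$, gives $\mu_{++}^*(1-\rho_+) = 1 + \mu_{++}^*(\rho_+) - 2\rho_+$ and therefore $f(1-\rho_+) = f(\rho_+)$, so $f'(1/2) = 0$ by the envelope theorem. This identifies $(\hat{\mu}_{++},\hat{\rho}_+)$ as a critical point of $\psi$, consistent with the direct verification, and the Hessian computation of \Lem~\ref{hessian_first_moment} shows it is a strict local maximum.

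The main obstacle is ruling out any other critical point of the reduced function $f$. Substituting the condition $\mu_{++}\mu_{--} = e^{-2\beta}\mu_{+-}^2$ into $\partial_{\rho_+}\psi = 0$ collapses the joint system to the single scalar equation
\begin{equation*}
    (d-1)\log\bc{\frac{\rho_+}{1-\rho_+}} = d\log\bc{\frac{\mu_{++}^*(\rho_+) e^\beta}{\mu_{+-}^*(\rho_+)}},
\end{equation*}
both sides of which vanish at $\rho_+ = 1/2$. To finish, one must show the two sides meet only there: using the explicit quadratic for $\mu_{+-}^*$ and implicit differentiation, one obtains closed-form derivatives of the right-hand side and verifies that the difference of the two sides is a smooth function on $(0,1)$ whose unique zero is $\rho_+ = 1/2$, for instance by combining a derivative comparison at $1/2$ with the boundary behavior $\mu_{+-}^*(\rho_+) \to 0$ as $\rho_+ \to 0,1$ (which forces the right-hand side to blow up faster than the left) and a single-crossing check. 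A cleaner alternative would be to establish global strict concavity of $\psi$ on $\mathcal{Q}$ directly, but the off-diagonal Hessian entry $\psi_{\mu_{++}\rho_+} = d(1-\rho_+)/(\mu_{--}\mu_{+-})$ blows up near the boundary, making a global negative-definiteness check delicate; the scalar reduction is therefore the more tractable route. Together with a routine boundary analysis of $\psi$ on $\partial \mathcal{Q}$, this upgrades the strict local maximum at $(\hat{\mu}_{++},\hat{\rho}_+)$ to a unique global maximum.
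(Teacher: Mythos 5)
Your overall strategy coincides with the paper's: verify stationarity, use strict concavity in $\mu_{++}$ to solve the first-order condition $\mu_{++}\mu_{--}=e^{-2\beta}\mu_{+-}^2$ for $\mu_{++}^*(\rho_+)$, substitute into $\partial_{\rho_+}\psi=0$ to get a scalar equation in $\rho_+$, and show its unique root is $\rho_+=\tfrac12$. Your scalar equation is algebraically equivalent to the function $\xi(\hat\rho_+)$ the paper studies, and the spin-flip symmetry/envelope-theorem observation is a pleasant (if inessential) way to see that $\tfrac12$ is a critical point.

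The problem is that the decisive step is not actually carried out. Everything in the lemma hinges on showing the reduced stationarity equation has \emph{no other} root in $(0,1)$, and your proposal only names this as a ``single-crossing check,'' suggesting it could be done ``by combining a derivative comparison at $1/2$ with the boundary behavior $\mu_{+-}^*(\rho_+)\to 0$ as $\rho_+\to 0,1$.'' That combination is insufficient: a smooth function can vanish at $\tfrac12$ with the correct derivative sign there, diverge with the correct signs at the endpoints, and still have additional interior zeros (which would correspond to additional critical points of $f$, possibly competing maxima). What is actually needed — and what the paper proves — is a \emph{global} monotonicity statement: after substituting the explicit root $\hat\mu_{++}=\hat\rho_+-\frac{1-\eta}{2(1-e^{-2\beta})}$ with $\eta=\sqrt{(1-2\hat\rho_+)^2+4\hat\rho_+(1-\hat\rho_+)e^{-2\beta}}$, the derivative of the scalar function collapses, after a nontrivial cancellation, to $\frac{1-e^{-2\beta}}{(1-\eta^2)\eta}\bigl(2d(\eta-1)-4\eta\bigr)<0$ for all feasible $\hat\rho_+$, so the function is strictly decreasing and the root at $\tfrac12$ is unique. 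This computation is the substance of the lemma; your plan is salvageable along exactly these lines, but as written it asserts the conclusion of the hard step rather than proving it.
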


\begin{proof}
	As a starting point, we set the first derivatives equal to zero, resulting in
	\begin{align*}
		\frac{\partial \psi\left( \hat{\mu}_{++}, \hat{\rho}_+\right)}{\partial \hat{\mu}_{++}}  &=  -\frac{d}{2} \left(\log \left( \hat{\mu}_{++}\right)  + \log \left(1 + \hat{\mu}_{++} - 2 \hat{\rho}_+  \right)   - 2 \log \left(\hat{\rho}_+ -\hat{\mu}_{++}\right) + 2 \beta   \right) = 0
	\end{align*}
	which is equivalent to
	\begin{align*}
		0 &= \log \left( \hat{\mu}_{++}\right)  + \log \left(1 + \hat{\mu}_{++} - 2 \hat{\rho}_+  \right)   - 2 \log \left(\hat{\rho}_+ -\hat{\mu}_{++}\right) + 2 \beta \\
		\Leftrightarrow 1 &= \frac{\hat{\mu}_{++}  \left(1 + \hat{\mu}_{++} - 2 \hat{\rho}_+  \right)}{ \left( \hat{\rho}_+ -\hat{\mu}_{++}\right)^2} e^{2 \beta} \\
		\Leftrightarrow 0 &= \hat{\mu}_{++}^2 \left(1 - e^{-2 \beta} \right) + \hat{\mu}_{++} \left(1 - 2 \hat{\rho}_+ + e^{-2 \beta} 2 \hat{\rho}_+  \right) -  e^{-2 \beta} \hat{\rho}_+^2.
	\end{align*}
	Then, the quadratic formula yields two candidates for the solution, namely
	\begin{align*}
		\hat{\mu}_{++, 1/2} &= \frac{-1 + 2 \hat{\rho}_+ - e^{-2 \beta} 2 \hat{\rho}_+ \pm \sqrt{ \left(1 - 2 \hat{\rho}_+ \left( 1 - e^{-2 \beta}\right)  \right)^2+ 4  \left(1 - e^{-2 \beta} \right) e^{-2 \beta} \hat{\rho}_+^2}}{2 \left(1 - e^{-2 \beta} \right)}\\
		&= \hat{\rho}_+ - \frac{1  \mp \sqrt{ \left(1 - 2\hat{\rho}_+ \right)^2 + 4 \hat{\rho}_+ e^{-2 \beta}\left( 1- \hat{\rho}_+\right)  }}{2 \left(1 - e^{-2 \beta} \right)}.
	\end{align*}
This result immediately poses the question of possible extrema. First we note that
\begin{align*}
	\left(1 - 2 \hat{\rho}_+ \right)^2 + 4 \hat{\rho}_+ e^{-2 \beta}\left( 1- \hat{\rho}_+\right) > 0
\end{align*}
since both summands are positive.
This in turn enables us to rule out $\hat{\mu}_{++, 2}= \hat{\rho}_+ - \frac{1  + \sqrt{ \left(1 - 2\hat{\rho}_+ \right)^2 + 4 \hat{\rho}_+ e^{-2 \beta}\left( 1- \hat{\rho}_+\right)  }}{2 \left(1 - e^{-2 \beta} \right)}$ as a solution since that would imply
\begin{align*}
	\hat{\mu}_{-+} = \hat{\mu}_{+-} = \hat{\rho}_+ - \hat{\mu}_{++, 2}= \frac{1  + \sqrt{ \left(1 - 2\hat{\rho}_+ \right)^2 + 4 \hat{\rho}_+ e^{-2 \beta}\left( 1- \hat{\rho}_+\right)  }}{2 \left(1 - e^{-2 \beta} \right)} > \frac{1 }{2 \left(1 - e^{-2 \beta} \right)} > \frac{1}{2}
\end{align*}
which contradicts the fact that $\hat{\mu}$ is a probability measure. As a consequence, the only solution that is consistent with our model assumptions is
\begin{align}\label{mu++_in_terms_of_rho+}
	\hat{\mu}_{++}  = \hat{\rho}_+ - \frac{1  - \sqrt{ \left(1 - 2\hat{\rho}_+ \right)^2 + 4 \hat{\rho}_+ e^{-2 \beta}\left( 1- \hat{\rho}_+\right)  }}{2 \left(1 - e^{-2 \beta} \right)} = \hat{\rho}_+ - \frac{1  - \eta}{2 \left(1 - e^{-2 \beta} \right)}
\end{align}
where
\begin{align*}
    \eta := \sqrt{ \left(1 - 2\hat{\rho}_+ \right)^2 + 4 \hat{\rho}_+ e^{-2 \beta}\left( 1- \hat{\rho}_+\right)  } = \sqrt{ 1 - 4 \hat{\rho}_+ \left( 1 - e^{-2 \beta}  \right) +4 \hat{\rho}_+^2 \left( 1 - e^{-2 \beta}  \right)}
\end{align*}
In the next step, we plug \eqref{mu++_in_terms_of_rho+} into the first derivative of $\psi$ with respect to $\rho_{+}$
\begin{align*}
	\frac{\partial \psi\left( \hat{\mu}_{++},\hat{\rho}_+\right)}{\partial \hat{\rho}_+}  &=  \log\left(1 - \hat{\rho}_+ \right) - \log\left(\hat{\rho}_+ \right)\\ 
	&- d \left( -  \log \left(1 + \hat{\mu}_{++} -  2 \hat{\rho}_+  \right) + \log \left( 1 - \hat{\rho}_+\right)  +  \log \left( \hat{\rho}_+ -\hat{\mu}_{++}\right) -  \log\left( \hat{\rho}_+\right) - \beta \right) = 0
\end{align*}
which yields
	\begin{align*}
		\xi \left( \hat{\rho}_+ \right) &:= \log\left(1 - \hat{\rho}_+ \right) - \log\left(\hat{\rho}_+ \right)\\
		&- d \left( -  \log \left(1 - \frac{1  - \eta}{2 \left(1 - e^{-2 \beta} \right)} -  \hat{\rho}_+  \right) + \log \left( 1 - \hat{\rho}_+\right)  +  \log \left( \frac{1  - \eta}{2 \left(1 - e^{-2 \beta} \right)}\right) -  \log\left( \hat{\rho}_+\right) - \beta \right)\\
		&= \log\left(1 - \hat{\rho}_+ \right) - \log\left(\hat{\rho}_+ \right) - d \left( -  \log \left(\frac{\left( 1 -    \hat{\rho}_+\right) 2 \left(1 - e^{-2 \beta} \right)}{1  - \eta }  - 1 \right) + \log \left( 1 - \hat{\rho}_+\right)  -  \log\left( \hat{\rho}_+\right) - \beta \right)\\
		&= 0
	\end{align*}
Next, let us take a look at the derivative of $\xi$ with respect to $\hat{\rho}_+$:
\begin{align*}
	\frac{\partial \xi \left( \hat{\rho}_+ \right)}{\partial \hat{\rho}_+}&= \left( 1-d\right) \left( - \frac{1}{1 - \hat{\rho}_+} - \frac{1}{\hat{\rho}_+} \right) 
	+ d \left( \frac{\frac{- \left( 1 - \eta  \right) 2 \left(1 - e^{-2 \beta} \right) + \frac{2}{ \eta} \left( 2 \hat{\rho}_+ - 1 \right)  \left( 1 - e^{-2 \beta}  \right) \left( 1 -    \hat{\rho}_+\right) 2 \left(1 - e^{-2 \beta} \right)}{\left( 1 - \eta \right)^2 }}{\frac{\left( 1 -    \hat{\rho}_+\right) 2 \left(1 - e^{-2 \beta} \right)}{1  - \eta }  - 1} \right) \\
	&=   \frac{ d - 1}{\left( 1 - \hat{\rho}_+\right)  \hat{\rho}_+}  
	+ d \left( \frac{- \left( 1 - \eta  \right) 2 \left(1 - e^{-2 \beta} \right) + \frac{4}{ \eta} \left( 2 \hat{\rho}_+ - 1 \right)   \left( 1 -    \hat{\rho}_+\right)  \left(1 - e^{-2 \beta} \right)^2}{\left( 1 -    \hat{\rho}_+\right) 2 \left(1 - e^{-2 \beta} \right)\left( 1  - \eta \right)   - \left( 1 - \eta \right)^2} \right) \\	
\end{align*}
where we made use of the simple fact
\begin{align*}
	\frac{\partial \eta }{\partial \hat{\rho}_+} = \frac{1}{2 \eta} \left( 8\hat{\rho}_+ - 4 \right)  \left( 1 - e^{-2 \beta}  \right) =  \frac{2}{ \eta} \left( 2 \hat{\rho}_+ - 1 \right)  \left( 1 - e^{-2 \beta}  \right).
\end{align*}
To simplify the first derivative, we focus on
\begin{align*}
		&\frac{- \left( 1 - \eta  \right) 2 \left(1 - e^{-2 \beta} \right) + \frac{4}{ \eta} \left( 2 \hat{\rho}_+ - 1 \right)   \left( 1 -    \hat{\rho}_+\right)  \left(1 - e^{-2 \beta} \right)^2}{\left( 1 -    \hat{\rho}_+\right) 2 \left(1 - e^{-2 \beta} \right)\left( 1  - \eta \right)   - \left( 1 - \eta \right)^2}
		= \frac{ 2 \left(1 - e^{-2 \beta} \right) \cdot \left( - \left( 1 - \eta  \right)  + \frac{1 - \eta^2}{\eta} + \frac{2}{ \eta} \left(  \hat{\rho}_+ - 1 \right)   \left(1 - e^{-2 \beta} \right)\right) }
		{\left( 1 - \eta \right) \left(  1  -   2 \hat{\rho}_+  - 2 e^{-2 \beta} + 2 e^{-2 \beta} \hat{\rho}_+    + \eta \right) }\\
		&= \frac{2 \left(1 - e^{-2 \beta} \right) }{\left( 1 - \eta \right) \eta} \cdot \frac{   - \eta + 2  \hat{\rho}_+ - 1    - 2  \hat{\rho}_+ e^{-2 \beta} + 2 e^{-2 \beta}}
		{ 1  -   2 \hat{\rho}_+  - 2 e^{-2 \beta} + 2 e^{-2 \beta} \hat{\rho}_+    + \eta } 
		= -  \frac{2 \left(1 - e^{-2 \beta} \right) }{\left( 1 - \eta \right) \eta}.
	\end{align*}
As a consequence, the derivative can be simplified to
\begin{align*}
	\frac{\partial \xi \left( \hat{\rho}_+ \right)}{\partial \hat{\rho}_+} &=   \frac{d -1}{\left( 1- \rho_+\right) \rho_+} - 2 d \left( \frac{ 1- e^{-2 \beta}}{\left(1 - \eta \right) \eta} \right).
\end{align*}
Before proceedings, we point out that 
\begin{align*}
	\left( 1 - \hat{\rho}_+\right) \hat{\rho}_+ = \frac{1 - \eta^2}{\left( 1 - e^{-2 \beta}\right) 4 } 
\end{align*}
which brings us to
	\begin{align*}
		\frac{\partial \xi \left( \hat{\rho}_+ \right)}{\partial \hat{\rho}_+} &=\frac{d -1}{\left( 1- \hat{\rho}_+\right) \hat{\rho}_+} - 2 d  \left( \frac{ 1- e^{-2 \beta}}{    \left(1 - \eta \right) \eta} \right) 
		= \frac{1 - e^{-2 \beta}}{1 - \eta}\left( \frac{4d -4 }{1 + \eta}- \frac{2 d}{\eta}\right) \\
		&=\frac{1 - e^{-2 \beta}}{\left( 1 - \eta^2\right)  \eta}\left( 4d \eta - 4 \eta - 2d - 2d \eta \right) =\frac{1 - e^{-2 \beta}}{\left( 1 - \eta^2\right)  \eta}\left( 2d \left( \eta  -1 \right) - 4 \eta - 2d  \right) < 0
	\end{align*}
where we implicitly assumed that $\hat{\rho}_+$ is conceivable which especially means that $0 < \eta < 1$ holds.
$\frac{\partial \xi \left( \hat{\rho}_+ \right)}{\partial \hat{\rho}_+} < 0$  implies that if we can locate any root of $\xi \left( \hat{\rho}_+ \right)$ it is automatically  the unique one. 
Recalling our definition of $\rho_+$ and $\mu_{++}$ from \eqref{eq_def_hat_rho} and \eqref{eq_def_hat_mu}, we conjecture that this root is located at $\hat{\rho}_+ = \frac{1}{2}$. A short calculation indeed verifies
\begin{align*}
	\xi \left(\frac{1}{2} \right) & = - d \left( -  \log \left(\frac{\left( 1 -    \frac{1}{2}\right) 2 \left(1 - e^{-2 \beta} \right)}{1  - \eta }  - 1 \right) + \log \left( 1 - \frac{1}{2}\right)  -  \log\left(\frac{1}{2}\right) - \beta \right)\\
	& =  d \left(   \log \left(\frac{ 1 - e^{-2 \beta}}{1  - e^{- \beta} }  - 1 \right)  + \beta \right) =  d \left(   \log \left(e^{- \beta } \right)  + \beta \right) = 0
\end{align*}
where we used
\begin{align*}
	\eta = \sqrt{ \left(1 - 2\hat{\rho}_+ \right)^2 + 4 \hat{\rho}_+ e^{-2 \beta}\left( 1- \hat{\rho}_+\right)  } =  \sqrt{ \left(1 - 2 \cdot \frac{1}{2} \right)^2 + 4 \cdot \frac{1}{2}  \cdot e^{-2 \beta}\left( 1- \frac{1}{2} \right)  } = e^{- \beta}.
\end{align*}
This immediately allows us to calculate the optimal $\hat{\mu}_{++}$ by plugging $\hat{\rho}_+ = \frac{1}{2}$ into equation (\ref{mu++_in_terms_of_rho+})
\begin{align*}
	\hat{\mu}_{++}   = \frac{1}{2} - \frac{1  - e^{- \beta}}{2 \left(1 - e^{-2 \beta} \right)} = \frac{1}{2} - \frac{1}{2 \left(1 + e^{- \beta} \right)} = \frac{e^{- \beta}}{2 \left(1 + e^{- \beta} \right)}.
\end{align*}
The above establishes that $\left( \hat{\mu}_{++}, \hat{\rho}_{+}\right)$ is the (only) extremum of $\psi$. Let us next show that it is indeed the global maximum (and not a minimum or stationary point). From the calculation of the Hessian (\Lem~\ref{hessian_first_moment}), we saw that the first leading principal minor is negative (see inequality (\ref{first_principal_minor})) and the second one is positive (see inequality (\ref{second_principal_minor})). Thus, $\psi$ is strictly concave at $\left( \hat{\mu}_{++}, \hat{\rho}_{+}\right)$ which makes it a local maximum. Due to the uniqueness of the extremum, $\left( \hat{\mu}_{++}, \hat{\rho}_{+}\right)$ thereby also is the unique maximum.
\end{proof}

As an application of \Lem~\ref{maximum_first_moment}, we obtain the following corollary.

\begin{corollary} \label{maximum_first_moment_cor}
We have
\begin{align*}
	\max_ {\left( \mu_{++}, \rho_{+} \right) \in \mathcal{Q}} \psi \left( \mu_{++}, \rho_{+} \right) = \psi \left( \hat{\mu}_{++}, \hat{\rho}_{+}\right) = \left(1 - \frac{d}{2} \right) \log\left( 2\right)  + \frac{d}{2}\log\left( 1 + e^{- \beta} \right)
\end{align*}
\end{corollary}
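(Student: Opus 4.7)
This is a direct evaluation: Lemma~\ref{maximum_first_moment} already identifies the unique maximizer as $(\hat\mu_{++},\hat\rho_+)=\left(\frac{e^{-\beta}}{2(1+e^{-\beta})},\frac{1}{2}\right)$, so all that remains is to plug this point into
\[
\psi(\mu_{++},\rho_+) = \mathrm{H}(\rho)-\frac{d}{2}\bigl(D_{\mathrm{KL}}(\mu\,\|\,\rho\otimes\rho)+\beta(1+2\mu_{++}-2\rho_+)\bigr)
\]
and simplify. No new ideas are needed; the only thing to watch is a cancellation of a $\beta$-linear term.

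The plan is to compute three quantities in turn. First, $\mathrm{H}(\hat\rho)=\log 2$ because $\hat\rho_+=\hat\rho_-=1/2$. Second, the linear term evaluates directly: $1+2\hat\mu_{++}-2\hat\rho_+=\frac{e^{-\beta}}{1+e^{-\beta}}$. Third, for the KL term I would exploit that $\hat\rho\otimes\hat\rho$ is uniform on the four pairs with mass $1/4$ each, so
\[
D_{\mathrm{KL}}(\hat\mu\,\|\,\hat\rho\otimes\hat\rho)=2\log 2-\mathrm{H}(\hat\mu).
\]
The entropy $\mathrm{H}(\hat\mu)$ decomposes into two contributions from $\hat\mu_{++}=\hat\mu_{--}=\frac{e^{-\beta}}{2(1+e^{-\beta})}$ and two from $\hat\mu_{+-}=\hat\mu_{-+}=\frac{1}{2(1+e^{-\beta})}$; using $\log\hat\mu_{++}=-\beta-\log(2(1+e^{-\beta}))$ and $\log\hat\mu_{+-}=-\log(2(1+e^{-\beta}))$, a short simplification gives
\[
\mathrm{H}(\hat\mu)=\log 2+\log(1+e^{-\beta})+\frac{\beta e^{-\beta}}{1+e^{-\beta}},
\]
hence
\[
D_{\mathrm{KL}}(\hat\mu\,\|\,\hat\rho\otimes\hat\rho)=\log 2-\log(1+e^{-\beta})-\frac{\beta e^{-\beta}}{1+e^{-\beta}}.
\]

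Finally I would assemble these pieces in the definition of $\psi$. The $\frac{\beta e^{-\beta}}{1+e^{-\beta}}$ term inside $D_{\mathrm{KL}}$ cancels exactly against $\beta(1+2\hat\mu_{++}-2\hat\rho_+)=\frac{\beta e^{-\beta}}{1+e^{-\beta}}$, leaving
\[
\psi(\hat\mu_{++},\hat\rho_+)=\log 2-\frac{d}{2}\bigl(\log 2-\log(1+e^{-\beta})\bigr)=\Bigl(1-\frac{d}{2}\Bigr)\log 2+\frac{d}{2}\log(1+e^{-\beta}),
\]
which is the claimed value. There is no genuine obstacle here; the only mildly nontrivial feature is recognizing the $\beta$-linear cancellation, which is what makes the final expression clean (and ultimately explains the $n(1-d/2)\log 2+(d/2)\log(1+e^{-\beta})$ leading term in Proposition~\ref{prop_first_moment}).
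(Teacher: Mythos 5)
Your proposal is correct and follows essentially the same route as the paper: direct evaluation of $\psi$ at the maximizer from Lemma~\ref{maximum_first_moment}, with the same cancellation of the $\frac{\beta e^{-\beta}}{1+e^{-\beta}}$ term against $\beta(1+2\hat\mu_{++}-2\hat\rho_+)$. The only cosmetic difference is that you compute the Kullback--Leibler term via the identity $D_{\mathrm{KL}}(\hat\mu\,\|\,\hat\rho\otimes\hat\rho)=2\log 2-\mathrm{H}(\hat\mu)$ (valid since $\hat\rho\otimes\hat\rho$ is uniform), whereas the paper expands the divergence sum directly; both yield $\log 2-\log(1+e^{-\beta})-\frac{\beta e^{-\beta}}{1+e^{-\beta}}$ and the same final value.
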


\begin{proof}
We evaluate $\psi$ at the optimal point $\left( \hat{\mu}_{++}, \hat{\rho}_{+}\right)$. Starting with the entropy we have
\begin{align} \label{eq_first_moment_entropy_opt}
	\textrm{H} \left( \hat{\rho} \right) = - \frac{1}{2} \log\left( \frac{1}{2}\right) - \frac{1}{2} \log\left( \frac{1}{2}\right) = \log \left( 2\right). 
\end{align}
Continuing with the Kullback-Leibler divergence, we find
\begin{align} \label{eq_first_moment_kldivergence_opt}
	D_\textrm{KL} \left( \hat{\mu }\vert\vert \hat{\rho} \otimes\hat{\rho}\right)  
	&=  \frac{e^{- \beta}}{2 \left( 1 + e^{- \beta} \right) } \log\left( \frac{e^{- \beta}}{2 \left( 1 + e^{- \beta} \right) }\right)  +  \frac{e^{- \beta}}{2 \left( 1 + e^{- \beta} \right) } \log \left(  \frac{e^{- \beta}}{2 \left( 1 + e^{- \beta} \right)} \right)\\
	&+  2 \left( \frac{1}{2} -  \frac{e^{- \beta}}{2 \left( 1 + e^{- \beta} \right) }\right) \log \left( \frac{1}{2} - \frac{e^{- \beta}}{2 \left( 1 + e^{- \beta} \right) }\right) - 2 \log\left( \frac{1}{2}\right)  \\
	&=  \frac{e^{- \beta}}{ \left( 1 + e^{- \beta} \right) } \log\left( \frac{e^{- \beta}}{2 \left( 1 + e^{- \beta} \right) }\right)   + 2 \log \left( 2\right) +    \frac{1}{ \left( 1 + e^{- \beta} \right) } \log \left( \frac{1}{2 \left( 1 + e^{- \beta} \right) }\right) \\
	&=  \log \left( 2\right) - \frac{\beta e^{- \beta}}{ \left( 1 + e^{- \beta} \right) } - \log \left( 1 + e^{- \beta} \right).
\end{align}
Combining \eqref{eq_first_moment_entropy_opt} and \eqref{eq_first_moment_kldivergence_opt} we arrive at
\begin{align*}
	\psi \left( \hat{\mu}_{++}, \hat{\rho}_{+}\right)  &= \textrm{H} \left( \hat{\rho}\right) - \frac{d}{2}\left( D_\textrm{KL} (\hat{\mu} \vert\vert \hat{\rho} \otimes\hat{\rho}) + \beta  \left(1 +2 \hat{\mu}_{++} -2 \hat{\rho}_+ \right)\right)\\
	&= \log \left( 2\right)  - \frac{d}{2}\left( \log \left( 2\right) - \frac{\beta e^{- \beta}}{ \left( 1 + e^{- \beta} \right) } - \log \left( 1 + e^{- \beta} \right) + 2 \beta  \frac{e^{- \beta}}{2 \left( 1 + e^{- \beta} \right) } \right)\\
	&= \left(1 - \frac{d}{2} \right) \log\left( 2\right)  + \frac{d}{2}\log\left( 1 + e^{- \beta} \right).
\end{align*}
as claimed.
\end{proof}

\begin{proof}[Proof of \Prop~\ref{prop_first_moment_pairing_model}]
With \Lem s~\ref{hessian_first_moment} and \ref{maximum_first_moment} and \Cor~\ref{maximum_first_moment_cor} in place, all that is left for the application of Laplace's method from \cite{Greenhill_2010} is the determination of the appropriate lattice. 
Put differently, we are interested in the respective matrix $A_\textrm{first}$ which consist of the basis elements of the lattice. 
For the first moment, the matrix can be constructed in a rather simple way. Since $\rho_+$ is of the form
\begin{align*}
	\rho_+ = \frac{1}{n} \sum_{v \in V} \vecone {\left\lbrace \sigma \left( v \right) = +1 \right\rbrace }
\end{align*}
the first entry $A_{\textrm{first},1,1}$ immediately turns out to be equal to one. Similarly, keeping in mind
\begin{align*}
	\mu_{++}  = \frac{2}{dn} \sum_{(u,v) \in E} \vecone {\left\lbrace \sigma \left( v \right) = \sigma \left( u\right)  = +1 \right\rbrace }
\end{align*}
yields  $A_{\textrm{first},2,2} =  \frac{2}{d}$. Having constructed the matrix $A_\textrm{first}$, we are left to compute its determinant
\begin{align*}
	\det \left( A_{\textrm{first}} \right)=	\det \begin{pmatrix} 
		1 & 0  \\ 
		0 & \frac{2}{d}  \\ 
	\end{pmatrix} = \frac{2}{d}.
\end{align*}
Now, we can bring together all the findings of this section to obtain a precise statement of the first moment up to an error term of order $O(\exp(1/n))$.
Applying the Laplace method, i.e. Theorem 2.3 in \cite{Greenhill_2010} to  expression \eqref{first_moment_before_Laplace}  yields
\begin{align*}
    \Erw \left[ Z_{\Gbold, \beta}   \right] 
    &=  \exp\left(O \left( \frac{1}{n}\right)\right)  \cdot \sum_{\left( \rho_+, \mu_{++}\right) \in \mathcal{Q}}  \frac{1}{\pi n\sqrt{ 2 \mu_{++}\mu_{--} \mu_{+-} d}} \exp\left(  n  \psi\left( \mu_{++}, \rho_+\right)\right)\\
	&=  \exp\left(O \left( \frac{1}{n}\right)\right) \frac{2 \pi n \exp \left(n \psi \left( \hat{\mu}_{++}, \hat{\rho}_{+}\right) \right) }{\det \left( A_{\textrm{first}} \right) \sqrt{\det\left( - \textrm{Hes}_\psi \left( \hat{\mu}_{++}, \hat{\rho}_{+}\right)  \right) }\pi n\sqrt{ 2 \hat{\mu}_{++}\hat{\mu}_{--} \hat{\mu}_{+-} d}} \\
	&=  \exp\left(O \left( \frac{1}{n}\right)\right) \frac{2  \exp \left(n \left(  \left(1 - \frac{d}{2} \right) \log\left( 2\right)  + \frac{d}{2}\log\left( 1 + e^{- \beta} \right)\right)  \right) }{\frac{2}{d} \sqrt{4 d \left( 1 + e^{- \beta}\right)^2  e^\beta \left(2 + d \left( e^{\beta} - 1\right)  \right) }\sqrt{ 2 \frac{e^{- 2 \beta}}{8 \left( 1 + e^{- \beta}\right)^3 } d}} \\
	&=  \exp\left(O \left( \frac{1}{n}\right)\right)  \sqrt{\frac{1 + e^{ \beta}}{ 2 + d  e^{\beta} - d }}  \exp \left(n \left(  \left(1 - \frac{d}{2} \right) \log\left( 2\right)  + \frac{d}{2}\log\left( 1 + e^{- \beta} \right)\right)  \right).
\end{align*}
as claimed.
\end{proof}

\subsection{The simple d-regular case}

Having established the first moment in the pairing model $\Gbold$,  we next adapt the result to the $d$-regular model $\G$ of interest. As we will see, a pairing variant $\Gpone$ of the planted model will be a useful tool to do so. The pairing variant $\Gpone$ is defined as follows. First, draw a spin assignment $\SIGMA^* \in \left\lbrace \pm 1 \right\rbrace^n$ uniformly at random. Then, draw a graph $\Gpone$ according to the probability distribution
\begin{align*}
	\mathbb{P} \left[ \Gpone = G \vert \SIGMA^* \right] \propto \exp \left( - \beta \mathcal{H}_G \left( \SIGMA^* \right) \right).
\end{align*}
where $G$ might contain self-loops and double-edges. In the following, we will call a graph $G$ \textit{simple} if it does not feature any such self-loops or double-edges. With this definition, we are able to prove \Prop~\ref{prop_first_moment}.

\begin{proof}[Proof of \Prop~\ref{prop_first_moment}]
To get started, we note the asymptotic equality
\begin{equation}\label{first_moment_pairing_regular_connection}
	\Erw \left[ Z_{\G, \beta}   \right] \sim  \frac{\Pr \brk{\Gpone \mathrm{\  is \ simple}}}{\Pr \brk{\Gbold \textrm{ is simple}}}\Erw \left[ Z_{\Gbold, \beta}   \right].
\end{equation}
Fortunately, both $\Pr \brk{\Gpone \mathrm{\  is \ simple}}$ and $\Pr \brk{\Gbold \textrm{ is simple}}$ can be readily found in the literature.
\begin{fact}[Corollary 9.7 in \cite{Janson_2011}]\label{prob_pairing_model_simple}
	For $d \geq 3$, we have
	\begin{align*}
		\Pr \brk{\mathbf{G} \mathrm{\  is \ simple}} \sim \exp \bc{ - \frac{d -1}{2} - \frac{\bc{d - 1}^2}{4}}.
	\end{align*}
\end{fact}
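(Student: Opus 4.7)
\bigskip

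\textbf{Proof proposal for Fact~\ref{prob_pairing_model_simple}.}

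The plan is to prove this by the method of factorial moments, showing that the joint distribution of the number of self-loops and the number of double edges in $\mathbf{G}$ converges to a pair of independent Poisson variables, and then reading off $\Pr[\mathbf{G}\text{ is simple}]$ as the probability that both Poissons equal zero. Concretely, let $L=L(\mathbf{G})$ denote the number of self-loops and $D=D(\mathbf{G})$ the number of double edges (pairs of vertices joined by two parallel edges; triple edges would be counted with multiplicity, but they will be negligible). The graph $\mathbf{G}$ is simple if and only if $L=0$ and $D=0$, so
\[
  \Pr[\mathbf{G}\text{ is simple}] = \Pr[L=0,\ D=0].
\]

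First I would compute the expectations in the pairing model on $dn$ half-edges. For a fixed vertex $v$, a self-loop contributes whenever two of its $d$ half-edges are matched to each other; the probability that a designated pair of half-edges is matched is $1/(dn-1)$, so summing over vertices and over $\binom{d}{2}$ pairs of half-edges per vertex gives
\[
  \Erw[L] \;=\; n\binom{d}{2}\frac{1}{dn-1} \;\longrightarrow\; \frac{d-1}{2} \;=:\; \lambda_L.
\]
Similarly, for an unordered pair of distinct vertices $u,v$, the number of double edges is essentially the number of ways to pick two half-edges at $u$, two at $v$, and pair them off ``in parallel''; the matching probability is $2/[(dn-1)(dn-3)]$, yielding
\[
  \Erw[D] \;=\; \binom{n}{2}\binom{d}{2}^2 \frac{2}{(dn-1)(dn-3)} \;\longrightarrow\; \frac{(d-1)^2}{4} \;=:\; \lambda_D.
\]

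Next I would compute the joint factorial moments $\Erw[(L)_j (D)_k]$ for fixed $j,k\ge 0$. Each such factorial moment counts the expected number of ordered tuples of $j$ distinct potential loops and $k$ distinct potential double edges that are all present simultaneously. Using the standard estimate that in the pairing model the probability that a prescribed collection of $j+2k$ edges is realized equals
\[
  \prod_{i=1}^{j+2k}\frac{1}{dn-2i+1},
\]
and that configurations in which the chosen loops/double-edges share vertices contribute $O(1/n)$ and are negligible, I would obtain
\[
  \Erw[(L)_j (D)_k] \;\longrightarrow\; \lambda_L^{\,j}\,\lambda_D^{\,k} \qquad (n\to\infty).
\]
By the method of moments for integer-valued random variables, this joint factorial-moment convergence is equivalent to joint convergence in distribution $(L,D)\xrightarrow{d}(\Lambda_L,\Lambda_D)$, where $\Lambda_L\sim\Po(\lambda_L)$ and $\Lambda_D\sim\Po(\lambda_D)$ are independent.

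Finally, since $\{L=0,D=0\}$ is determined by finitely many atoms of the limit and these atoms have continuity (the limiting distribution is discrete with no mass at infinity), the convergence in distribution implies
\[
  \Pr[\mathbf{G}\text{ is simple}] \;=\; \Pr[L=0,\,D=0] \;\longrightarrow\; e^{-\lambda_L}\,e^{-\lambda_D} \;=\; \exp\!\left(-\frac{d-1}{2}-\frac{(d-1)^2}{4}\right),
\]
which is the asserted asymptotic. The main technical step, and the one I would spend most care on, is the factorial-moment bookkeeping: one needs uniform control over the $j+2k$ small constant showing that overlapping configurations (shared vertices or half-edges among the chosen loops and double edges) contribute a lower-order $O(n^{-1})$ correction, so that only the ``generic'' disjoint contribution survives in the limit and produces the clean product $\lambda_L^{\,j}\lambda_D^{\,k}$. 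Controlling the contribution of triple or higher multi-edges requires a similar short argument: their expected number is $O(n^{-1})$ for $d$ fixed, so they almost surely do not appear and do not affect the limit.
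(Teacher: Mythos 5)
Your proposal is correct, but note that the paper does not prove this statement at all: it is imported verbatim as Corollary 9.7 of Janson--{\L}uczak--Ruci\'nski, and your argument (joint factorial moments of the loop and double-edge counts in the pairing model, convergence to independent Poissons with means $(d-1)/2$ and $(d-1)^2/4$, then evaluation at $(0,0)$) is precisely the standard proof given in that reference, with the expectation computations carried out correctly. The only point worth flagging is that the claim that overlapping configurations and triple edges contribute only $O(n^{-1})$ is the part that actually requires the bookkeeping you defer, but this is routine and does not affect correctness.
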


\begin{lemma}[Lemma 4.6 in \cite{Coja_2020}]\label{prob_planted_pairing_model_simple}
	For $d \geq 0$ and $\beta > 0$ we have 
	\begin{equation*}
		\Pr \brk{\Gpone\mathrm{\  is \ simple}} \sim \exp \bc{- \bc{d -1} \frac{1}{1 + e^\beta} - \bc{d -1}^2 \frac{1 + e^{2\beta}}{2 \bc{1 + e^\beta}^2}}.
	\end{equation*}
\end{lemma}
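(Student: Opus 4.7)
The plan is to extend \Prop~\ref{prop_cycle_planted}, already proved for short cycles of length $i \geq 3$, to include self-loops ($i=1$) and double edges ($i=2$) in the planted pairing model $\Gpone$. Since $\Gpone$ is simple exactly when both the loop count $L$ and the double-edge count $D$ are zero, joint Poisson convergence of $(L, D)$ will immediately give $\Pr\brk{\Gpone \text{ simple}} \to \exp\bc{-\lambda_L^* - \lambda_D^*}$ for the appropriate Poisson parameters, and the remaining task is to identify $\lambda_L^* = \frac{d-1}{1+e^\beta}$ and $\lambda_D^* = \frac{(d-1)^2(1+e^{2\beta})}{2(1+e^\beta)^2}$, which are exactly $\lambda_i(1+\delta_i)$ for $i = 1, 2$ in the notation of \eqref{eq_def_delta_lambda}.

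First I would condition on the typical event $\cbc{\mu' \in \cA_{\hat\mu}}$ by the same argument as in \Lem~\ref{A_mu_high_prob_event}, restricting to the regime where the empirical edge-spin vector is within $O(n^{-1/2} \log n)$ of $\hat\mu$ at a multiplicative cost of $1+o(1)$. On this set I would mimic the proof of \Lem~\ref{lemma_cycle_planted_optimal_mu} for "cycles" of length $l = 1$ and $l = 2$. A self-loop at a specified vertex using a specified pair of half-edges is always same-spin, contributing Boltzmann weight $e^{-\beta}$; the same double-factorial and binomial asymptotics as in that proof then give $p_{\text{loop}}(\hat\mu) \sim \frac{2e^{-\beta}}{dn(1+e^{-\beta})}$, and multiplying by the $n\binom{d}{2}$ choices of vertex and half-edge pair yields $\Erw\brk{L \mid \cA_{\hat\mu}} \to \lambda_L^*$. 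For a specified double edge between $u \neq v$, the total weight of the two parallel edges is $e^{-2\beta}$ when $\sigma_u = \sigma_v$ and $1$ otherwise; summing over the $\binom{n}{2}$ vertex pairs (half with each spin alignment), $\binom{d}{2}^2$ half-edge choices at each vertex, and the two possible matchings of the four half-edges yields $\Erw\brk{D \mid \cA_{\hat\mu}} \to \lambda_D^*$ after a brief simplification.

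For the higher factorial moments $\Erw\brk{(L)_j (D)_k \mid \cA_{\hat\mu}}$ I would reuse the disjoint-vs.-shared-vertex dichotomy from the proof of \Lem~\ref{lemma_cycle_planted_optimal_mu}: ordered tuples of loops and double edges that share a vertex contribute $O(1/n)$, while vertex-disjoint tuples factor into $(\lambda_L^*)^j (\lambda_D^*)^k (1+o(1))$. The method of moments then delivers $(L, D) \xrightarrow{d} (\Po(\lambda_L^*), \Po(\lambda_D^*))$ with independent components, and removing the conditioning via \Lem~\ref{A_mu_high_prob_event} establishes the claim.

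The main obstacle will be the Laplace-style asymptotics once several loops and double edges are forced to occur simultaneously: the partition function on the residual $dn - O(1)$ half-edges must be compared to the full partition function while tracking the Boltzmann reweighting introduced by the removed half-edges, and the accounting is slightly different for $l = 1$ than for $l = 2$ because a loop removes two half-edges at one vertex while a double edge removes two at each of two vertices. However, this is precisely the same technical core already executed for $l \geq 3$ in \Lem~\ref{lemma_cycle_planted_optimal_mu}, so the argument should carry over nearly verbatim, with only the two Poisson parameters changing and no new analytic input required.
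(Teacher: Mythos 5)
Your proposal is correct, and the route you describe (condition on $\cA_{\hat\mu}$, compute the expected loop and double-edge counts there, check factorial moments, conclude joint Poisson convergence and evaluate $\Pr\brk{L=D=0}$) is essentially the approach the paper itself takes for the companion statement \Lem~\ref{prob_sec_pairing_model_simple}; your identification of the limiting parameters as $\lambda_1(1+\delta_1)=\frac{d-1}{1+e^{\beta}}$ and $\lambda_2(1+\delta_2)=\frac{(d-1)^2(1+e^{2\beta})}{2(1+e^{\beta})^2}$ matches the claimed formula exactly. Note only that the paper does not prove this particular lemma but imports it from Lemma~4.6 of \cite{Coja_2020}, so your argument is a reconstruction of that cited proof rather than a departure from anything done here.
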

In combination with \Prop~\ref{prop_first_moment_pairing_model} and equation \eqref{first_moment_pairing_regular_connection}, Fact \ref{prob_pairing_model_simple} and Lemma \ref{prob_planted_pairing_model_simple} yield the desired result.
\end{proof}

\section{The Second Moment/ Proof of Proposition \ref{prop_second_moment}}
Similar to the first moment, we will first establish the following result for the paring model $\Gbold$.

\begin{proposition}\label{prop_second_moment_pairing_model}
	For $0 < \beta < \bks$ and $d \geq 3$ we have
	\begin{equation*}
		\Erw \brk{Z_{\Gbold, \beta} \evO} = \exp \bc{O \bc{\frac{1}{n}}} \frac{ \bc{ 1 + e^{ \beta}}^2  \exp \bc{n \bc{\bc{ 2 - d}  \log \bc{2} + d \log \bc{1 + e^{- \beta}}}}}{\bc{d e^\beta - d + 2} \sqrt{2 e^{2 \beta} + 2 d e^\beta  - d e^{2 \beta} - d + 2}}.
	\end{equation*}
\end{proposition}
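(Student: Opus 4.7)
The plan is to mirror the pairing-model first-moment argument (\Prop~\ref{prop_first_moment_pairing_model}), but with an ordered pair of spin assignments $(\sigma_1,\sigma_2)$ replacing a single $\sigma$. For each pair, classify every vertex $v$ by its label $\tau_v := (\sigma_1(v),\sigma_2(v)) \in \cbc{\pm 1}^2$ and every edge by the unordered pair of endpoint labels; let $\rho \in \cP(\cbc{\pm 1}^2)$ and $\nu$ denote the induced vertex-class and edge-type empirical distributions (with the obvious marginalization and symmetry constraints). The configuration-model count then produces, in analogy with \Lem~\ref{first_mom_starting_point}, an expression of the shape
\[
\Erw\brk{Z_{\Gbold,\beta}^2\evO} = \sum_{\rho,\nu}\binom{n}{(n\rho_c)_c}\cdot \mathrm{(pairing\ count)}\cdot \exp\bc{-\beta dn\cdot w(\nu)}\vecone\cbc{\cO},
\]
where $w(\nu)$ sums the two per-copy monochromatic-edge weights. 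Applying Stirling exactly as in \Lem~\ref{reformulation_first_moment} reduces this to
\[
\Erw\brk{Z_{\Gbold,\beta}^2\evO} = \sum_{\rho,\nu}\frac{\psi_{\mathrm{pref}}(\rho,\nu)}{\mathrm{poly}(n)}\exp\bc{n\Psi(\rho,\nu)+O(1/n)}\vecone\cbc{\cO},
\]
with $\Psi(\rho,\nu) = H(\rho) - \tfrac{d}{2}\KL{\nu}{\rho\otimes\rho} - \tfrac{d\beta}{2}\bigl(\nu^{(1)}_{++}+\nu^{(1)}_{--}+\nu^{(2)}_{++}+\nu^{(2)}_{--}\bigr)$ where $\nu^{(i)}$ denotes the marginal on the $i$-th coordinate.

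The natural critical point is the replica-symmetric (product) one,
\[
\hat\rho_{(s_1,s_2)} = \tfrac14, \qquad \hat\nu_{\{(s_1,s_2),(t_1,t_2)\}} = \hat\mu_{s_1,t_1}\,\hat\mu_{s_2,t_2},
\]
where $\hat\mu$ is as in \eqref{eq_def_hat_mu}. A direct computation in the spirit of \Lem~\ref{maximum_first_moment} gives $\nabla\Psi(\hat\rho,\hat\nu)=0$ and $\Psi(\hat\rho,\hat\nu) = 2\,\psi(\hat\mu_{++},\hat\rho_+) = (2-d)\log 2 + d\log(1+e^{-\beta})$, matching the exponent of the claim.

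The main obstacle is uniqueness of the global maximum. Over the full feasibility polytope, $\Psi$ admits competing critical points corresponding to $\sigma_1 = \pm\sigma_2$ which, for $\beta$ approaching $\bks$, have values dangerously close to $\Psi(\hat\rho,\hat\nu)$. This is exactly where the indicator $\evO$ and \Lem~\ref{lem_o} become essential: on $\cO$ the overlap $\abs{\sigma_1\cdot\sigma_2}$ is $o(n)$, which confines the summation to a neighborhood of $\hat\rho$ in which $(\hat\rho,\hat\nu)$ is the only critical point. Within that neighborhood, the $\pm 1$-symmetry makes $\mathrm{Hes}_\Psi(\hat\rho,\hat\nu)$ block-diagonal: the ``diagonal'' block reproduces the first-moment Hessian of \Lem~\ref{hessian_first_moment}, while the ``overlap'' block is strictly negative definite precisely when $\beta < \bks$. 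This is the same spatial-mixing input that underlies \Lem~\ref{lem_o} and the broadcasting-on-trees analysis of \cite{Coja_2020}, and it is the most delicate part of the argument.

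With uniqueness and strict negative-definiteness in place, the proof concludes via \Thm~\ref{theoremGJR} just as in \Prop~\ref{prop_first_moment_pairing_model}. The relevant lattice is generated by the integer vertex counts $n\rho_c \in \ZZ$ and the integer edge counts $\tfrac{dn}{2}\nu_{c,c'} \in \ZZ$ subject to marginalization; its determinant together with $\det(-\mathrm{Hes}_\Psi|_V)$ yields the factor $\bc{de^\beta-d+2}\sqrt{2e^{2\beta}+2de^\beta-de^{2\beta}-d+2}$ in the denominator, while evaluating $\psi_{\mathrm{pref}}$ at $(\hat\rho,\hat\nu)$ produces the $\bc{1+e^\beta}^2$ in the numerator. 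Combined with $\exp(n\Psi(\hat\rho,\hat\nu))$ this yields the claim up to the stated $\exp(O(1/n))$ error.
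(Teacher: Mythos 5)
Your overall strategy coincides with the paper's: the same configuration-model count over vertex classes in $\cbc{\pm 1}^2$ and the sixteen edge types, the same Stirling reduction to an exponent of the form $H(\rho)-\frac d2\bc{\KL{\mu}{\rho\otimes\rho}+\beta(\cdots)}$, the same product critical point $\hat\nu=\hat\mu\otimes\hat\mu$ with value $(2-d)\log 2+d\log(1+e^{-\beta})$, and the same final Laplace step combining a lattice determinant with a Hessian determinant. The gap is in how you establish that this critical point is the \emph{unique global} maximum --- exactly the step the paper singles out as the key difficulty and to which it devotes all of Section \ref{section_maximization_sec_mom}. The event $\cO$ constrains only the overlap coordinate $\alpha=\sigma_1\cdot\sigma_2/n$ to be $o(1)$; it does not confine the remaining coordinates of $(\mu,\rho)$ to a neighbourhood of $(\hat\rho,\hat\nu)$. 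So your claim that on $\cO$ the summation is ``confined to a neighborhood of $\hat\rho$ in which $(\hat\rho,\hat\nu)$ is the only critical point'' does not come for free, and a local negative-definiteness check of the Hessian cannot substitute for hypothesis (5) of \Thm~\ref{theoremGJR}: it excludes neither other interior critical points nor boundary maxima in the $\mu$-directions inside the slab $\cbc{|\alpha|=o(1)}$.

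The paper closes this gap with a two-stage \emph{global} optimization (\Lem s~\ref{lemma_sol_min_sec_moment}--\ref{lemma_optimal_alpha}): for each fixed overlap $\alpha$ it minimizes $g(\mu,\rho_\alpha)$ over the entire $\mu$-polytope via Lagrange multipliers, using convexity of the Kullback--Leibler divergence in $\mu$ to guarantee that the stationary point is the unique global minimizer; it then shows that the resulting one-dimensional function $f_d(\alpha,\beta)$ is strictly concave on all of $(-1,1)$ for $\beta<\bks$ (monotonicity of $\partial^2 f_d/\partial\alpha^2$ in $\beta$ plus an explicit evaluation at $\bks$), so $\alpha=0$ is the unique maximizer. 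An argument of this kind --- the convexity-in-$\mu$ reduction followed by a one-dimensional global analysis --- is the missing ingredient in your proposal. Your remaining assertions are consistent with the paper's computations: the Hessian determinant of \Lem~\ref{hessian_second_moment} indeed contains the factor $(de^\beta-d+2)^2$, the square of the first-moment factor, times the overlap factor $2e^{2\beta}+2de^\beta-de^{2\beta}-d+2=(1+e^\beta)^2\bc{1-(d-1)\delta_1^2}$, which is positive precisely for $\beta<\bks$, in line with your block heuristic.
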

Once we have done so, we bridge the gap between $\Gbold$ and $\G$.

\subsection{Getting started}

For the second moment calculation, we introduce a set of variables that is similar in meaning to the ones employed in the previous sections. Yet, the definitions become more complicated since for the second moment each node $v$ in some graph $G$ is assigned two spins $\sigma_{v}$ and $\tau_{v}$ which can be either positive or negative. As before, we aim to measure the fractions of edges that connect two vertices with certain spin configurations. Since each node is equipped with two spins, there are 16 possible spin configurations for two connected vertices. Usually, we will denote such a configuration as $\left( \sigma_1, \tau_1, \sigma_2, \tau_2\right) \in \left\lbrace \pm 1 \right\rbrace^4 $ where $\sigma_1$ and $\tau_1$ denote the spins assigned to the first node. Accordingly, $\sigma_2$ and $\tau_2$ are the spins of the second node.
With this notation of spin assignments in mind, we define
\begin{align*}
	\mu_{r,s,t,u}  &:= \frac{2}{d n} \sum_{(u,v) \in E} \vecone \left\lbrace \sigma (u)=r, \sigma (v)=s, \tau (u) = t, \tau(v) = u\right\rbrace \qquad \cbc{r,s,t,u \in {\pm 1}}.  \\
\end{align*}
with the shorthand notation $\mu_{++++} = \mu_{+1, +1, +1, +1}$ and so forth.
Our choices of $\mu$ are constrained by the following relationship.
\begin{align}\label{mu_symmetry}
	\mu_{\left( \sigma_1, \tau_1, \sigma_2, \tau_2\right)} = \mu_{\left( \sigma_2, \tau_2, \sigma_1, \tau_1\right)} \hspace{5 em}\forall  \left( \sigma_1, \tau_1, \sigma_2, \tau_2\right) \in \left\lbrace \pm 1 \right\rbrace^4.
\end{align}
Note that $\mu_{++++}, \mu_{+-+-}, \mu_{-+-+}$, and $\mu_{----}$ get a special meaning: these four configurations satisfy both $\sigma (u) = \sigma (v)$ and $\tau (u) = \tau(v)$. Hence, they trivially fit condition (\ref{mu_symmetry}). All of the remaining 12 $\mu$'s can be divided into pairs which are the same up to the order of the two vertices. Since the edges are undirected, for each of these pairs we simply count all the edges that could be assigned to either of the two components of $\mu$. Then, to ensure that the $\mu$ pairs satisfy \eqref{mu_symmetry}, the count is equally split between the $\mu$ pair. Combining these thoughts yields
\begin{align*}
	&\mu_{++--} =  \mu_{--++} := \frac{1}{d n} \sum_{(u,v) \in E} \vecone \left\lbrace \sigma (u) = \tau (u) \neq \sigma(v) = \tau(v)\right\rbrace\\
	&\mu_{+--+} =  \mu_{-++-} := \frac{1}{d n} \sum_{(u,v) \in E} \vecone \left\lbrace \sigma (u) = \tau (v) \neq \tau (u) = \sigma(v)\right\rbrace \\
	&\mu_{+++-} =  \mu_{+-++} := \frac{1}{d n} \sum_{(u,v) \in E} \vecone \left\lbrace \sigma (u) = \sigma (v) = +1  \wedge \tau (u) \neq \tau(v)\right\rbrace \\
	&\mu_{++-+}=  \mu_{-+++} := \frac{1}{d n} \sum_{(u,v) \in E} \vecone\left\lbrace \sigma (u) \neq \sigma (v)   \wedge \tau (u) = \tau(v) = +1\right\rbrace \\
	&\mu_{---+} =  \mu_{-+--} := \frac{1}{d n} \sum_{(u,v) \in E} \vecone \left\lbrace \sigma (u) = \sigma (v) = -1  \wedge \tau (u) \neq \tau(v)\right\rbrace\\
	&\mu_{--+-} =  \mu_{+---} := \frac{1}{d n} \sum_{(u,v) \in E} \vecone \left\lbrace \sigma (u) \neq \sigma (v)   \wedge \tau (u) = \tau(v) = -1\right\rbrace.
\end{align*}
Finally, we need expressions to indicate which fraction of vertices is assigned a certain spin configuration $\left( \sigma_1, \tau_1 \right) \in \left\lbrace \pm 1 \right\rbrace^2$. This is achieved rather easily by defining
\begin{align*}
	\rho_{\sigma_1, \tau_1} := \frac{1}{n} \sum_{v \in V} \vecone \left\lbrace \sigma \left(v\right) = \sigma_1 , \tau \left( v\right) =  \tau_1 \right\rbrace
\end{align*}
for $\left( \sigma_1, \tau_1\right) \in \left\lbrace \pm 1 \right\rbrace^2$. With the definitions in place, we can move on to calculating the second moment.
As a starting point we choose an equation that was derived in detail in \cite{Coja_2020}.

\begin{lemma}[(4.42) in \cite{Coja_2020}] \label{second_moment_start}
We have
\begin{align}\label{eqsecmom}
	\mathbb{E}\left[ Z_{\Gbold, \beta}^2   \right] &= \sum_{\mu \in \mathcal{U}} \frac{\mathcal{X}_\mu \mathcal{Y}_\mu \mathcal{Z}_\mu}{\left( dn -1\right) !!}  \cdot \exp \left( - \beta \frac{d n}{2} \left( \sum_{\sigma \in A_1}  \mu (\sigma) + 2 \sum_{\sigma \in A_2}  \mu(\sigma) \right) \right)
\end{align}
where $\mathcal{U}$ is the set of conceivable distributions $\mu$, $\sigma$ is of the form $\sigma := (\sigma_1, \tau_1, \sigma_2, \tau_2) \in \{\pm 1\}^4$, $A_1$ is defined by
\[
A_1 := \left\lbrace x \in \{\pm 1\}^4: \sigma_1 = \sigma_2 \textrm{ and } \tau_1 \neq \tau_2 \right\rbrace \cup \left\lbrace x \in \{\pm 1\}^4: \sigma_1 \neq \sigma_2 \textrm{ and } \tau_1 = \tau_2 \right\rbrace,
\]
$A_2$ is given by
\[
A_2 := \left\lbrace x \in \{\pm 1\}^4: \sigma_1 = \sigma_2 \textrm{ and } \tau_1 = \tau_2 \right\rbrace,
\]
and
\begin{align*}
	\mathcal{X}_\mu &= \binom{n}{\rho_{++}n, \rho_{+-}n, \rho_{-+}n, \rho_{--} n },\\
	\mathcal{Y}_\mu &= \prod_{i, j \in \{ \pm\}} \binom{d n \rho_{i j}}{d n \mu_{i j ++}, d n \mu_{i j +-}, d n \mu_{i j -+}, d n \mu_{i j --}},\\
	\mathcal{Z}_\mu &= \left( d n \mu_{++--}\right)!  \left( d n \mu_{-++-}\right)!  \prod_{k \in \{ \pm\}} \left( \left( d n \mu_{+ k - k}\right)! \left( d n \mu_{k + k -}\right)!\right) \prod_{i, j \in \{ \pm\}}\left(d n \mu_{i j i j} -1 \right)!! .
\end{align*}
\end{lemma}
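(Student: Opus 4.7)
The plan is to derive the formula by expanding the square of $Z_{\Gbold,\beta}$ as a double sum over spin configurations, grouping configurations by a \emph{joint type} that records the statistics of the pair $(\sigma,\tau)$ and the edge statistics of $\Gbold$, and counting matchings in the pairing model consistent with each type. Since the statement coincides with (4.42) of \cite{Coja_2020}, one could simply cite that reference; I describe the natural derivation instead.

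\textbf{Step 1: Double expansion.} Starting from
\[
\Erw\brk{Z_{\Gbold,\beta}^2} = \sum_{\sigma,\tau \in \cbc{\pm 1}^n} \Erw\brk{\exp\bc{-\beta\bc{\cH_{\Gbold}(\sigma) + \cH_{\Gbold}(\tau)}}},
\]
I observe that each edge $(u,v)$ contributes a weight $\exp(-\beta) \cdot \vecone\{\sigma_u=\sigma_v\} + \exp(-\beta)\vecone\{\tau_u=\tau_v\}$ to the exponent, so the combined factor attached to an edge depends only on the joint spin pattern $(\sigma_u,\tau_u,\sigma_v,\tau_v)$. In particular, edges whose pattern lies in $A_2$ accumulate a factor $\exp(-2\beta)$ (both spins match at the endpoints), while edges in $A_1$ accumulate $\exp(-\beta)$ (exactly one spin matches), and all other patterns accumulate $1$. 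Since $\mu_{\sigma_1\tau_1\sigma_2\tau_2}$ records exactly the fraction of edges in each pattern (with the symmetrization \eqref{mu_symmetry}), the Boltzmann factor collapses to the exponential appearing in \eqref{eqsecmom}.

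\textbf{Step 2: Enumeration by joint type.} I partition the outer sum by the vertex-marginal vector $\rho=(\rho_{ij})_{i,j\in\cbc{\pm}}$ and, within that, by the joint edge-type $\mu \in \cU$. The number of ordered pairs $(\sigma,\tau)$ inducing a given $\rho$ is the multinomial $\cX_\mu = \binom{n}{\rho_{++}n,\rho_{+-}n,\rho_{-+}n,\rho_{--}n}$. Conditional on such a pair, I must count pairing-model matchings whose induced edge statistics match $\mu$. This splits into two pieces: first, for each of the four vertex classes $(i,j)$ I distribute the $dn\rho_{ij}$ half-edges among the $4$ possible partner classes, which yields the product of multinomials $\cY_\mu$. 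Second, I count the number of perfect matchings of the half-edges that respect the assignments made in the previous step. Half-edges destined for a self-symmetric class $(i,j,i,j)$ are matched within that class, contributing $(dn\mu_{ijij}-1)!!$ each; half-edges destined for a cross-symmetric pair of classes like $(++,--)$ or the various one-flip patterns are matched by a bijection between the two sides, contributing a factorial each. This produces $\cZ_\mu$.

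\textbf{Step 3: Normalization and assembly.} Dividing by $(dn-1)!!$ (the total number of perfect matchings on the $dn$ clones, with respect to which the pairing model is uniform) and multiplying by the Boltzmann weight from Step 1 gives exactly \eqref{eqsecmom}. Summing over $\mu\in\cU$ completes the derivation.

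\textbf{Main obstacle.} The only delicate part is the careful bookkeeping in Step~2: one must verify that the symmetrization $\mu_{\sigma_1\tau_1\sigma_2\tau_2}=\mu_{\sigma_2\tau_2\sigma_1\tau_1}$ is correctly absorbed so that each unordered edge is counted once, while the paired factorials for cross-classes (e.g.\ $(dn\mu_{++--})!$) and double factorials for self-classes (e.g.\ $(dn\mu_{ijij}-1)!!$) produce the right normalization. Once these combinatorial factors are aligned, the identity reduces to a straightforward bookkeeping argument in the configuration model, as detailed in \cite{Coja_2020}.
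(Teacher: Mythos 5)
The paper itself offers no proof of this lemma: it is imported verbatim as equation (4.42) from \cite{Coja_2020}, so there is no in-paper argument to compare against. Your derivation is the standard configuration-model type-counting argument and is sound: the factorization of the Boltzmann weight over edges according to whether zero, one, or both of the spin pairs are monochromatic correctly yields the exponent $-\beta\frac{dn}{2}(\sum_{A_1}\mu + 2\sum_{A_2}\mu)$ (using that each $A_1$-pattern edge is split with weight $\frac{1}{dn}$ between two symmetric entries of $\mu$ while each $A_2$-pattern is self-symmetric and carries weight $\frac{2}{dn}$), and the decomposition into $\cX_\mu$ (choice of vertex classes), $\cY_\mu$ (distribution of clones among partner classes), $\cZ_\mu$ (within-class double factorials and cross-class bijection factorials), normalized by $(dn-1)!!$, is exactly the right bookkeeping. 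One small imprecision: in Step 1 you first say each edge ``contributes a weight $\exp(-\beta)\cdot\vecone\{\sigma_u=\sigma_v\}+\exp(-\beta)\vecone\{\tau_u=\tau_v\}$ to the exponent,'' which conflates the additive exponent $-\beta(\vecone\{\sigma_u=\sigma_v\}+\vecone\{\tau_u=\tau_v\})$ with a multiplicative weight; your next sentence states the correct multiplicative factors $e^{-2\beta}$, $e^{-\beta}$, $1$, so this is only a wording slip, but it should be cleaned up if the derivation were written out in full.
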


\subsection{Reformulation of the second moment}
The next Lemma equips us with an useful reformulation of the second moment.
\begin{lemma}\label{second_moment_simplified}
We have
\begin{align*}
	\mathbb{E}\left[ Z_{\Gbold, \beta}^2   \right] 
	= \sum_{\mu \in \mathcal{U}} 	\frac{1}{8 d^3 \pi^\frac{9}{2} n^\frac{9}{2} \sqrt{\prod_{\sigma \in B}\mu_{\sigma}}}\exp \left(n \delta \left( \mu, \rho \right) + O \left( \frac{1}{n}\right) \right) 
\end{align*}
where $\mathcal{U}$ is the set of conceivable distributions $\mu$, $\sigma$ is of the form $\sigma := (\sigma_1, \tau_1, \sigma_2, \tau_2) \in \{\pm 1\}^4$,  $\delta \left( \mu, \rho \right) $ is defined by
	\begin{align*}
		\delta \left( \mu, \rho \right)   := \textrm{H}\left( \rho\right) -\frac{d}{2} \left( D_\textrm{KL} \left( \mu \vert\vert \rho \otimes \rho\right) + \beta   \sum_{\sigma \in A_1}  \mu (\sigma) + 2 \beta  \sum_{\sigma \in A_2}  \mu(\sigma)\right)
	\end{align*}
	and
	\begin{align*}
		B := \{\pm\}^4 \setminus \left\{ (++--), (-++-), (++-+), (+++-), (+---), (-+--) \right\}.
	\end{align*}
\end{lemma}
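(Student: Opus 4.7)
The plan is to mirror the proof of Lemma~\ref{reformulation_first_moment}, adapted to the much larger index set of spin-pair configurations $\sigma \in \cbc{\pm 1}^4$. Starting from the expression in Lemma~\ref{second_moment_start}, I first convert the four double factorials $\bc{dn\mu_{ijij}-1}!!$ in $\cZ_\mu$ together with the denominator $\bc{dn-1}!!$ into ordinary factorials via the identity $(2k-1)!! = (2k)!/\bc{k!\, 2^k}$ used in \eqref{eq_df}. This produces explicit powers of $2$ that cancel in groups once paired with the factor $2^{dn/2}$ coming from $(dn-1)!!$, paralleling the manipulation already carried out in \eqref{eq_firstmoment_1}.

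Next I apply Stirling's formula $k! = \sqrt{2\pi k}(k/e)^k\exp\bc{O(1/k)}$ to every remaining factorial. The $(k/e)^k$ contributions assemble into the three pieces of $\delta(\mu,\rho)$ as follows: the multinomial $\cX_\mu$ yields $\exp\bc{n\textrm{H}(\rho)}$ exactly as in the first-moment calculation; the combined numerator/denominator contribution of $\cY_\mu$, $\cZ_\mu$, and the converted $(dn-1)!!$ yields $\exp\bc{-\tfrac{dn}{2}D_\mathrm{KL}(\mu\|\rho\otimes\rho)}$ via the algebraic identity $\textrm{H}(\mu)-\textrm{H}(\rho\otimes\rho) = -D_\mathrm{KL}(\mu\|\rho\otimes\rho)$; and the energy factor $\exp\bc{-\beta \tfrac{dn}{2}\bc{\sum_{\sigma\in A_1}\mu(\sigma)+2\sum_{\sigma\in A_2}\mu(\sigma)}}$ from Lemma~\ref{second_moment_start} carries through unchanged, completing $\delta(\mu,\rho)$. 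Since only finitely many Stirling expansions are applied, their errors aggregate into a uniform $\exp(O(1/n))$ term.

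The delicate step is the polynomial prefactor produced by the $\sqrt{2\pi k}$ contributions. Each denominator factor $(dn\mu_\sigma)!$ in $\cY_\mu$ occurs in the multinomial whose upper index is $dn\rho_{\sigma_1\tau_1}$; since the symmetry $\mu_{(\sigma_1\tau_1\sigma_2\tau_2)} = \mu_{(\sigma_2\tau_2\sigma_1\tau_1)}$ identifies off-diagonal pairs, each such pair picks up two denominator factorials from $\cY_\mu$, while the corresponding unique factorial in $\cZ_\mu$ cancels only one of them, leaving a single $\sqrt{dn\mu_\sigma}$ in the denominator per twin pair. The four diagonal double factorials each contribute one further $\sqrt{dn\mu_{ijij}}$ after applying the double-factorial identity. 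Multiplying these ten square roots and tracking the residual powers of $\pi$, $n$, $d$, and $2$ collapses the prefactor to $1/\bc{8d^3\pi^{9/2}n^{9/2}\sqrt{\prod_{\sigma\in B}\mu_\sigma}}$, where $B$ collects one representative of each twin pair together with the four diagonal configurations; this matches precisely the set $B$ defined in the statement.

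The main obstacle will be exactly this prefactor bookkeeping: the symmetries $\mu_\sigma = \mu_{\bar\sigma}$ produce duplicate factorials in $\cY_\mu$ that must be paired correctly against the singleton factorials in $\cZ_\mu$, and a single misplaced power of $2$, $\pi$, or $n$ would corrupt the final expression. The exponential structure, by contrast, follows routinely from the same entropy/KL identification that already succeeded for the first moment in Lemma~\ref{reformulation_first_moment}.
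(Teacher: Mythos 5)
Your plan reproduces the paper's own proof of Lemma~\ref{second_moment_simplified}: starting from Lemma~\ref{second_moment_start}, converting the double factorials via \eqref{eq_df}, applying Stirling's formula to each of $\cX_\mu$, $\cY_\mu$, $\cZ_\mu$ and $(dn-1)!!$, assembling the exponential terms through the identity $\textrm{H}(\mu)-\textrm{H}(\rho\otimes\rho)=-D_{\mathrm{KL}}(\mu\|\rho\otimes\rho)$, and cancelling the six factorials of $\cZ_\mu$ against their twins in $\cY_\mu$ to leave exactly the ten square roots indexed by $B$. The approach and the bookkeeping you describe are the same as in the paper, so the proposal is correct.
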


\begin{proof}
We start off the formulation of the second moment from \Lem~\ref{second_moment_start}. In the next lines, we will establish four asymptotic equalities\footnote{The basic idea for the proof is the same as the one in \cite{Coja_2020}. The contribution of this paper is a more precise calculation that allows us to reduce the error term to order $\exp O   \left(\frac{1}{n}\right)$.}. Let us start with
\begin{align*}
	\left( dn -1\right) !! &= \frac{\left( d n\right) !}{\left( \frac{d n}{2}\right) ! 2^\frac{d n}{2}}= 2^{-\frac{d n}{2}} \cdot \sqrt{2} \left( \frac{d n}{e}\right) ^{d n} \left( \frac{d n}{2 e} \right)^{-\frac{d n}{2}} \exp \left( O \left( \frac{1}{n} \right) \right)
	= \exp \left( \frac{1}{2} \log \left( 2\right) + \frac{d n}{2} \log \left( d n\right) -  \frac{d n}{2} + O \left( \frac{1}{n} \right) \right).
\end{align*}
Second, we take a closer look at
	\begin{align*}
		\mathcal{X}_\mu &= \binom{n}{\rho_{++}n, \rho_{+-}n, \rho_{-+}n, \rho_{--} n } = \frac{n!}{\left( \rho_{++} n \right)!\left( \rho_{+-} n \right)! \left( \rho_{-+} n \right)! \left( \rho_{--} n \right)!  }\\
		&= \left(2 \pi \right)^{- \frac{3}{2}}   \left( n^3 \rho_{++} \rho_{+-} \rho_{-+} \rho_{--}\right)^{-\frac{1}{2}} \left( \frac{n}{e}\right) ^{n}  \left( \frac{\rho_{++} n}{e}\right) ^{-\rho_{++} n}\left( \frac{\rho_{+-} n}{e}\right) ^{- \rho_{+-} n}\\
		&\cdot \left( \frac{\rho_{-+} n}{e}\right) ^{- \rho_{-+} n}\left( \frac{\rho_{--} n}{e}\right) ^{- \rho_{--} n}\exp \left( O \left( \frac{1}{n} \right) \right)\\
		&=\left(2 \pi \right)^{- \frac{3}{2}}   \left( n^3 \rho_{++} \rho_{+-} \rho_{-+} \rho_{--}\right)^{-\frac{1}{2}} \underbrace{\rho_{++}^{-\rho_{++} n}\rho_{+-}^{-\rho_{+-} n}\rho_{-+}^{-\rho_{-+} n}\rho_{--}^{-\rho_{--} n}}_{= \exp \left( n \cdot \textrm{H} \left( \rho \right) \right) } \exp \left(  O \left( \frac{1}{n}  \right)\right) \\
		&= \frac{1}{\sqrt{8 \pi^3 n^3 \rho_{++} \rho_{+-} \rho_{-+} \rho_{--}}}\exp \left( n \cdot \textrm{H} \left( \rho \right) + O \left( \frac{1}{n} \right)\right).
	\end{align*}
	Moving on to the third term, we obtain
	\begin{align*}
		\mathcal{Y}_\mu &= \prod_{i, j \in \{ \pm\}} \binom{d n \rho_{i j}}{d n \mu_{i j ++}, d n \mu_{i j +-}, d n \mu_{i j -+}, d n \mu_{i j --}}\\ 
		&= \prod_{i, j \in \{ \pm\}} \left(2 \pi d n \right)^{- \frac{3}{2}} \sqrt{ \frac{\rho_{i j}}{\mu_{i j ++} \mu_{i j +-}\mu_{i j -+}\mu_{i j --}}} \\
		& \qquad \qquad \cdot \rho_{i j}^{d n \rho_{i j}}\mu_{i j ++}^{- d n \mu_{i j ++}} \mu_{i j +-}^{- d n \mu_{i j +-}} \mu_{i j -+}^{- d n \mu_{i j -+}}\mu_{i j --}^{- d n \mu_{i j --}} \exp \left( O \left( \frac{1}{n} \right) \right)\\
		&= \frac{1}{\left( 2 \pi d n\right)^6}  \left( \prod_{i, j \in \{ \pm\}}  \sqrt{ \frac{\rho_{i j}}{\mu_{i j ++} \mu_{i j +-}\mu_{i j -+}\mu_{i j --}}}\right)   \exp \left(  d n \left( \textrm{H}\left( \mu\right)-  \textrm{H}\left( \rho\right)\right) + O \left( \frac{1}{n} \right) \right)\\
	\end{align*}
	Last, we consider the fourth term
	\begin{align*}
		\mathcal{Z}_\mu &= \left( d n \mu_{++--}\right)!  \left( d n \mu_{-++-}\right)!  \prod_{k \in \{ \pm\}} \left( \left( d n \mu_{+ k - k}\right)! \left( d n \mu_{k + k -}\right)!\right) \prod_{i, j \in \{ \pm\}}\left(d n \mu_{i j i j} -1 \right)!!\\
		&= 2 \pi d n \sqrt{ \mu_{++--} \mu_{-++-}} \left( \frac{d n \mu_{++--}}{e}\right)^{d n \mu_{++--}}  \left( \frac{d n \mu_{-++-}}{e}\right)^{d n \mu_{-++-}} \cdot  \exp \left( O \left( \frac{1}{n} \right) \right)\\
		&\qquad \qquad \cdot \prod_{k \in \{ \pm\}}\left(2 \pi d n \sqrt{ \mu_{+k-k} \mu_{k+k-}} \left( \frac{d n \mu_{+k-k}}{e}\right)^{d n \mu_{+k-k}}  \left( \frac{d n \mu_{k+k-}}{e}\right)^{d n \mu_{k+k-}} \right) \cdot \prod_{i, j \in \{ \pm\}} \frac{\left( d n \mu_{i j i j}\right) !}{\left( \frac{d n \mu_{i j i j}}{2}\right)! \cdot 2^\frac{d n \mu_{i j i j}}{2}}
	\end{align*}
	In order to proceed with the fourth equation, we keep in mind
	\begin{align*}
		\frac{\left( d n \mu_{i j i j}\right) !}{\left( \frac{d n \mu_{i j i j}}{2}\right)! \cdot 2^\frac{d n \mu_{i j i j}}{2}} &= 2^{-\frac{d n \mu_{i j i j}}{2}} \cdot \sqrt{2} \left( \frac{d n \mu_{i j i j}}{e}\right)^{d n \mu_{i j i j}} \left( \frac{d n \mu_{i j i j}}{2 e}\right)^{-\frac{d n \mu_{i j i j}}{2}}  \cdot  \exp \left( O \left( \frac{1}{n} \right) \right)\\
		&= \sqrt{2} \left( \frac{d n \mu_{i j i j}}{e}\right)^{\frac{d n \mu_{i j i j}}{2}} \cdot  \exp \left( O \left( \frac{1}{n} \right) \right)
	\end{align*}
	which brings us back to
\begin{align*}
	\mathcal{Z}_\mu
	&= \left( 2 \pi d n\right)^3  \sqrt{ \mu_{++--} \mu_{-++-}} \left( \frac{d n \mu_{++--}}{e}\right)^{d n \mu_{++--}}  \left( \frac{d n \mu_{-++-}}{e}\right)^{d n \mu_{-++-}} \cdot  \exp \left( O \left( \frac{1}{n} \right) \right)\\
	&\cdot \prod_{k \in \{ \pm\}}\left( \sqrt{ \mu_{+k-k} \mu_{k+k-}} \left( \frac{d n \mu_{+k-k}}{e}\right)^{d n \mu_{+k-k}}  \left( \frac{d n \mu_{k+k-}}{e}\right)^{d n \mu_{k+k-}} \right)
	\cdot \prod_{i, j \in \{ \pm\}} \sqrt{2} \left( \frac{d n \mu_{i j i j}}{e}\right)^{\frac{d n \mu_{i j i j}}{2}}\\
	&= 4 \left( 2 \pi d n\right)^3  \sqrt{ \mu_{++--} \mu_{-++-}} \left( \frac{d n \mu_{++--}}{e}\right)^{d n \mu_{++--}}  \left( \frac{d n \mu_{-++-}}{e}\right)^{d n \mu_{-++-}} \cdot  \exp \left( O \left( \frac{1}{n} \right) \right)\\
	&\qquad \qquad \cdot  \sqrt{ \mu_{++-+} \mu_{+++-}} \left( \frac{d n \mu_{++-+}}{e}\right)^{d n \mu_{++-+}}  \left( \frac{d n \mu_{+++-}}{e}\right)^{d n \mu_{+++-}} \sqrt{ \mu_{+---} \mu_{-+--}} \\
	&\qquad \qquad \cdot \left( \frac{d n \mu_{+---}}{e}\right)^{d n \mu_{+---}}  \left( \frac{d n \mu_{-+--}}{e}\right)^{d n \mu_{-+--}} \left( \frac{d n \mu_{++++}}{e}\right)^{\frac{d n \mu_{++++}}{2}}\\
	&\qquad \qquad \cdot   \left( \frac{d n \mu_{+-+-}}{e}\right)^{\frac{d n \mu_{+-+-}}{2}} \left( \frac{d n \mu_{-+-+}}{e}\right)^{\frac{d n \mu_{-+-+}}{2}} \left( \frac{d n \mu_{----}}{e}\right)^{\frac{d n \mu_{----}}{2}}.
\end{align*}
To simplify this rather complicated term further, we recall the symmetry of our model (see \eqref{mu_symmetry}). Applying this insight to our calculation yields
\begin{align*}
	\mathcal{Z}_\mu = 2^5 \pi^3 d^3 n^3 \sqrt{\mu_{++--} \mu_{-++-}  \mu_{++-+} \mu_{+++-} \mu_{+---} \mu_{-+--}}
	\cdot \exp\left( - \frac{d n}{2}\cdot \textrm{H}\left( \mu  \right) - \frac{d n}{2} + \frac{d n}{2}\log\left( dn \right) + O \left( \frac{1}{n} \right) \right).
\end{align*}
Next, we combine these four results starting with
\begin{align*}
	\frac{\mathcal{Z}_\mu}{\left( d n - 1\right) !!}
	&= 2^{\frac{9}{2}} \pi^3 d^3 n^3 \sqrt{\mu_{++--} \mu_{-++-}  \mu_{++-+} \mu_{+++-} \mu_{+---} \mu_{-+--}} \cdot \exp\left( - \frac{d n}{2}\cdot \textrm{H}\left( \mu  \right)  + O \left( \frac{1}{n} \right) \right) .
\end{align*}
Finally, we arrive at
\begin{align*}
	\frac{\mathcal{X}_\mu \mathcal{Y}_\mu \mathcal{Z}_\mu}{\left( d n - 1\right) !!} &= \exp \left( \frac{d n}{2} \textrm{H}\left( \mu\right) - n \left( d-1\right) \textrm{H}\left( \rho\right) - \frac{9}{2} \log \left( n\right) -  \frac{9}{2} \log \left( 2 \pi \right) - 3 \log\left( d\right) + O \left( \frac{1}{n} \right)\right) \\
	&\qquad \cdot \exp \left( - \frac{1}{2}\log \left(  \rho_{++} \rho_{+-} \rho_{-+} \rho_{--}\right)  + \frac{1}{2} \sum_{i, j \in \{ \pm\}} \log \left( \frac{\rho_{i j}}{\mu_{i j ++} \mu_{i j +-}\mu_{i j -+}\mu_{i j --}}\right)\right)\\
    &\qquad \cdot \exp \left( \frac{1}{2} \log \left( \mu_{++--} \mu_{-++-}  \mu_{++-+} \mu_{+++-} \mu_{+---} \mu_{-+--}\right) + \frac{3}{2} \log \left( 2\right)\right)\\
	&= \exp \left( \frac{d n}{2}\left(  \textrm{H}\left( \mu\right) - 2 \textrm{H}\left( \rho\right)\right) + n \textrm{H}\left( \rho\right)- \frac{9}{2} \log \left( 2 \pi n\right) - 3 \log\left( d \right)+ O \left( \frac{1}{n} \right)\right) \\
	&\qquad \cdot \exp \left(-  \frac{1}{2}\log \left( \rho_{++} \rho_{+-} \rho_{-+} \rho_{--}\right)   + \frac{1}{2} \log \left( \frac{\rho_{++} \rho_{+-} \rho_{-+} \rho_{--}}{\prod_{\sigma \in \{\pm\}^4}\mu_{\sigma}}\right)\right)\\
	&\qquad \cdot \exp \left( \frac{1}{2} \log \left( \mu_{++--} \mu_{-++-}  \mu_{++-+} \mu_{+++-} \mu_{+---} \mu_{-+--}\right)+ \frac{3}{2} \log \left( 2\right)\right)\\
	&= \exp \left( -\frac{d n}{2}D_\textrm{KL} (\mu \vert\vert \rho \otimes \rho) + n \textrm{H}\left( \rho\right)- \frac{9}{2} \log \left(  \pi n\right)  - 3 \log\left( 2 d \right)  - \frac{1}{2} \log \left( \prod_{ \sigma \in \{\pm\}^4} \mu_{\sigma}\right) \right)\\
	&\qquad \cdot \exp \left( \frac{1}{2} \log \left( \mu_{++--} \mu_{-++-}  \mu_{++-+} \mu_{+++-} \mu_{+---} \mu_{-+--}\right)+ O \left( \frac{1}{n} \right)\right)
\end{align*}
To simplify this expression, we introduce the set
	\begin{align*}
		B = \{\pm\}^4 \setminus \left\{ (++--), (-++-), (++-+), (+++-), (+---), (-+--) \right\}
	\end{align*}
to write
	\begin{align*}
		\frac{\mathcal{X}_\mu \mathcal{Y}_\mu \mathcal{Z}_\mu}{\left( d n - 1\right) !!}
		&= \frac{1}{8 d^3 \pi^\frac{9}{2} n^\frac{9}{2} \sqrt{\prod_{\sigma \in B}\mu_{\sigma}}}\exp \left(n \textrm{H}\left( \rho\right) -\frac{d n}{2}D_\textrm{KL} \left( \mu \vert\vert \rho \otimes \rho\right) + O \left( \frac{1}{n}\right) \right).
	\end{align*}
	With this result , the second moment turns out to be
	\begin{align*}
		\mathbb{E}\left[ Z_{\Gbold, \beta}^2   \right] &= \sum_{\mu \in \mathcal{U}} 	\frac{\mathcal{X}_\mu \mathcal{Y}_\mu \mathcal{Z}_\mu}{\left( d n - 1\right) !!} \cdot \exp \left( - \beta \frac{d n}{2} \left( \sum_{\sigma \in A_1}  \mu (\sigma) + 2 \sum_{\sigma \in A_2}  \mu(\sigma) \right) \right) \\
		&= \sum_{\mu \in \mathcal{U}} 	\frac{1}{8 d^3 \pi^\frac{9}{2} n^\frac{9}{2} \sqrt{\prod_{\sigma \in B}\mu_{\sigma}}}\exp \left(n \delta \left( \mu, \rho \right) + O \left( \frac{1}{n}\right) \right) \label{second_moment_before_Laplace}
	\end{align*}
	where $\mathcal{U}$ is the set of conceivable distributions $\mu$, $\sigma$ is of the form $\sigma := (\sigma_1, \tau_1, \sigma_2, \tau_2) \in \{\pm 1\}^4$,  $\delta \left( \mu, \rho \right) $ is defined by
	\begin{align*}
		\delta \left( \mu, \rho \right)   := \textrm{H}\left( \rho\right) -\frac{d}{2} \left( D_\textrm{KL} \left( \mu \vert\vert \rho \otimes \rho\right) + \beta   \sum_{\sigma \in A_1}  \mu (\sigma) + 2 \beta  \sum_{\sigma \in A_2}  \mu(\sigma)\right)
	\end{align*}
	$A_1$ is defined by
	\[
	A_1 := \left\lbrace x \in \{\pm 1\}^4: \sigma_1 = \sigma_2 \textrm{ and } \tau_1 \neq \tau_2 \right\rbrace \cup \left\lbrace x \in \{\pm 1\}^4: \sigma_1 \neq \sigma_2 \textrm{ and } \tau_1 = \tau_2 \right\rbrace,
	\]
	and $A_2$ is given by
	\[
	A_2 := \left\lbrace x \in \{\pm 1\}^4: \sigma_1 = \sigma_2 \textrm{ and } \tau_1 = \tau_2 \right\rbrace.
	\]
\end{proof}

Our ultimate goal is to apply the Laplace method. In order to keep things manageable, we will substitute certain variables using basic symmetry and composition arguments. First, we note that $\rho$ can be simply obtained by calculating the marginals of $\mu$, that is
\begin{align*}
	\rho_{++} &= \mu_{++++} + \mu_{+++-} + \mu_{++-+} + \mu_{++--} \\
	\rho_{+-} &= \mu_{+-++} + \mu_{+-+-} + \mu_{+--+} + \mu_{+---} \\
	\rho_{-+} &= \mu_{-+++} + \mu_{-++-} + \mu_{-+-+} + \mu_{-+--} \\
	\rho_{--} &= \mu_{--++} + \mu_{--+-} + \mu_{---+} + \mu_{----}.
\end{align*}
By construction we know that
\begin{align*}
	\mu_{+--+} &= \mu_{-++-} , \qquad \mu_{++--} =\mu_{--++}, \qquad \mu_{+++-} = \mu_{+-++}\\
	\mu_{++-+} &= \mu_{-+++}, \qquad	\mu_{+---} = \mu_{--+-}, \qquad \mu_{-+--} = \mu_{---+}\\
	\mu_{----} &= 1 - \sum_{\sigma \in \left\lbrace \pm 1 \right\rbrace^4, \sigma \neq \left( -1, -1, -1, -1 \right)  } \mu_\sigma
\end{align*}
Bringing these results together we are left with $9$ variables, which we rename in the following order for notational convenience
\begin{align*}
	x_1 &:= \mu_{+--+} = \mu_{-++-}, \qquad  x_2 := \mu_{++--}=\mu_{--++}, \qquad	x_3 := \mu_{+++-} = \mu_{+-++},\\
	x_4 &:= \mu_{++-+} = \mu_{-+++}, \qquad x_5 := \mu_{+---} = \mu_{--+-}, \qquad x_6 := \mu_{-+--} = \mu_{---+}\\
	x_7 &:= \mu_{+-+-}, \qquad x_8 := \mu_{-+-+}, \qquad x_9 := \mu_{++++}
\end{align*}
which implies
\begin{align*}
	\mu_{----} &= 1 - \sum_{\sigma \in \left\lbrace \pm 1 \right\rbrace^4, \sigma \neq \left( -1, -1, -1, -1 \right)  } \mu_\sigma   = 1 - 2 x_1 - 2 x_2 - 2 x_3 - 2 x_4 - 2 x_5 -2 x_6 -x_7 - x_8 - x_9\\
	\rho_{++} &= x_9 + x_3 +  x_4 + x_2 = x_2 + x_3 + x_4 + x_9\\
	\rho_{+-} &= x_3 + x_7 + x_1 + x_5= x_1 + x_3 + x_5 + x_7  \\
	\rho_{-+} &=  x_4+ x_1 + x_8 + x_6 = x_1  + x_4 + x_6 + x_8\\
	\rho_{--} &= x_2 + x_5 + x_6 + \mu_{----}= 1 - 2 x_1- x_2 - 2 x_3 - 2  x_4 - x_5 - x_6 -x_7 - x_8 - x_9.
\end{align*}
In order to apply the Laplace method to the second moment, let us consider the function
\begin{align*}
	\delta \left( \mu, \rho \right)   = \textrm{H}\left( \rho\right) -\frac{d}{2} \left( D_\textrm{KL} \left( \mu \vert\vert \rho \otimes \rho\right) + \beta   \sum_{\sigma \in A_1}  \mu (\sigma) + 2 \beta  \sum_{\sigma \in A_2}  \mu(\sigma)\right) .
\end{align*}
We continue by reformulating terms
\begin{align*}
	D_\textrm{KL} \left( \mu \vert\vert \rho \otimes \rho\right)
	&= 2 x_1 \log \left( \frac{x_1}{\rho_{+-} \rho_{-+}} \right) + 2 x_2 \log \left( \frac{x_2}{\rho_{++} \rho_{--}} \right) + 2 x_3 \log \left( \frac{x_3}{\rho_{++} \rho_{+-}}\right) + 2 x_4 \log \left(\frac{x_4}{\rho_{++} \rho_{-+}}\right) \\
	&\qquad + 2 x_5 \log \left(\frac{x_5}{\rho_{+-} \rho_{--}} \right)
	+ 2 x_6 \log \left(\frac{x_6}{\rho_{-+} \rho_{--}} \right) + x_7 \log \left(\frac{x_7}{\rho_{+-}^2} \right) + x_8 \log \left( \frac{x_8}{\rho_{-+}^2} \right)\\
	&\qquad + x_9 \log \left( \frac{x_9 }{\rho_{++}^2}\right)  + \mu_{----} \log\left( \frac{ \mu_{----}}{\rho_{--}^2}\right) \\
	&= 2 x_1 \log \left( x_1\right) + 2 x_2 \log \left( x_2\right) + 2 x_3 \log \left( x_3\right) + 2 x_4 \log \left(x_4 \right) + 2 x_5 \log \left( x_5\right)+ 2 x_6 \log \left( x_6\right)   \\
	&\qquad + x_7 \log \left( x_7\right) + x_8 \log \left( x_8\right)+ x_9 \log \left( x_9\right) + \mu_{----} \log\left( \mu_{----}\right) - 2 \rho_{++}  \log \left( \rho_{++}\right)\\
	&\qquad - 2 \rho_{+-}  \log \left( \rho_{+-} \right) - 2 \rho_{-+} \log \left( \rho_{-+} \right) -2 \rho_{--} \log \left( \rho_{--}\right).
\end{align*}
As a consequence, we obtain
\begin{align*}
	\delta \left( \mu, \rho \right)   &= \textrm{H}\left( \rho\right) -\frac{d}{2} \left( D_\textrm{KL} \left( \mu \vert\vert \rho \otimes \rho\right) + \beta   \sum_{\sigma \in A_1}  \mu (\sigma) + 2 \beta  \sum_{\sigma \in A_2}  \mu(\sigma)\right) \\
	&= \left(  1 - d\right) \textrm{H}\left( \rho\right) + \frac{d}{2} \textrm{H}\left( \mu \right) - d \beta  \left( x_3 + x_4 + x_5 + x_6 +  x_7 + x_8 + x_9 + \mu_{----} \right)
\end{align*}

\subsection{Application of  the Laplace method to the second moment}

To apply the Laplace method we need to determine the maximum of $\delta \left( \mu, \rho \right) $. This is achieved with the following Lemma. Due to its technical and tedious nature, the proof of the lemma is outsourced to a separate section (see section \ref{section_maximization_sec_mom}).
\begin{lemma}\label{sec_mom_optimum}
For $0<\beta<\bks$, we have
\begin{align*}
	\max_{\mu \in \mathcal{O}'} \delta\left( \mu, \rho\right)=\delta\left( \mu^*, \rho^*\right)  =\left( 2 - d\right)  \log \left( 2\right) + d \log \left(1 + e^{- \beta}  \right)
	\end{align*}
where $\mathcal{O}'$ denotes the set of all $\mu$ that are conceivable under the assumption that the event $\mathcal{O}$ occurs. The unique maximum is obtained at
	\begin{align*}
		\mu_{++++}^* &= \mu_{----}^* = \mu_{+-+-}^* = \mu_{-+-+}^* = \frac{e^{-2 \beta}}{4 \left( 1 + e^{- \beta}\right)^2}\\
		\mu_{+--+}^* &= \mu_{--++}^* = \mu_{-++-}^* = \mu_{++--}^* = \frac{1}{4 \left( 1 + e^{- \beta}\right)^2}\\
		\mu_{+++-}^* &= \mu_{++-+}^* = \mu_{+-++}^* = \mu_{-+++}^* = \mu_{---+}^*= \mu_{--+-}^*= \mu_{-+--}^*= \mu_{+---}^* = \frac{e^{- \beta}}{4 \left( 1 + e^{- \beta}\right)^2}
	\end{align*}
which also implies
	\begin{align*}
		\rho_{++}^* = \rho_{+-}^* = \rho_{-+}^* = \rho_{--}^* = \frac{1}{4}.
	\end{align*}
\end{lemma}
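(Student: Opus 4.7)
The plan is to verify that $\mu^*$ is a critical point of $\delta$ with the claimed value, and then to use spatial mixing to pin down uniqueness over the restricted domain $\mathcal{O}'$. The key structural observation is that $\mu^*$ factorizes as an edge-wise tensor product of the first-moment optimum with itself, namely
\begin{equation*}
\mu^*_{(\sigma_1, \tau_1, \sigma_2, \tau_2)} \;=\; \hat{\mu}_{\sigma_1 \sigma_2} \cdot \hat{\mu}_{\tau_1 \tau_2},
\end{equation*}
where $\hat{\mu}$ is the maximizer from \Lem~\ref{maximum_first_moment}. A quick inspection of the six stated values of $\mu^*$ confirms this factorization, and the induced marginal is $\rho^* = \hat{\rho} \otimes \hat{\rho}$ with $\rho^*_{ij} = 1/4$. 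In particular the overlap $\rho^*_{++} - \rho^*_{+-} - \rho^*_{-+} + \rho^*_{--} = 0$, so $\mu^* \in \mathcal{O}'$.

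Given the factorization, computing $\delta(\mu^*, \rho^*)$ reduces cleanly to the first moment: entropies and the Kullback-Leibler divergence split additively as $H(\rho^*) = 2 H(\hat{\rho})$ and $D_{\mathrm{KL}}(\mu^* \| \rho^* \otimes \rho^*) = 2 D_{\mathrm{KL}}(\hat{\mu} \| \hat{\rho} \otimes \hat{\rho})$, while the Hamiltonian-like term telescopes to $\sum_{\sigma \in A_1} \mu^*(\sigma) + 2 \sum_{\sigma \in A_2} \mu^*(\sigma) = 2 (\hat{\mu}_{++} + \hat{\mu}_{--})$. Substituting into the definition of $\delta$ yields $\delta(\mu^*, \rho^*) = 2\psi(\hat{\mu}_{++}, \hat{\rho}_+)$, which by \Cor~\ref{maximum_first_moment_cor} equals $(2-d)\log 2 + d \log(1+e^{-\beta})$. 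Differentiating $\delta$ in the nine independent coordinates $x_1, \ldots, x_9$ at $\mu^*$ yields conditions of the form ``log of a ratio of $\mu$-entries equals a $\beta$-dependent constant,'' each of which factors through the first-moment stationarity equations already established in the proof of \Lem~\ref{maximum_first_moment}; hence $\mu^*$ is a critical point.

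The main obstacle is the \emph{uniqueness} of the maximum on $\mathcal{O}'$. Without the overlap restriction from $\mathcal{O}$, one expects $\delta$ to admit competing critical points with nonzero overlap $q = \rho_{++} - \rho_{+-} - \rho_{-+} + \rho_{--}$ once $\beta$ is large enough. My plan is to parametrize admissible $\mu$ by $q$ together with transverse fluctuations about the factorized reference, and for each fixed $q$ near zero to use Lagrange multipliers under the marginal and overlap constraints to produce a one-parameter family of conditional maximizers $\mu^{(q)}$ with $\mu^{(0)} = \mu^*$. The crux is to show that $q \mapsto \delta(\mu^{(q)}, \rho^{(q)})$ is strictly maximized at $q=0$ for every $\beta < \bks$. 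This is precisely where the Kesten-Stigum bound enters: the fixed-point equations governing $\mu^{(q)}$ coincide with the broadcasting recursion on the infinite $d$-regular tree, which is contractive (equivalently, non-reconstructive for the disassortative stochastic block model) exactly when $\beta < \bks$. In that regime the trivial overlap $q=0$ is the unique solution, ruling out rival interior maximizers; above $\bks$ a non-trivial fixed point appears, consistent with the phase transition identified in \cite{Coja_2020}.

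Finally, to promote $\mu^*$ to a strict maximum I would compute the $9 \times 9$ Hessian $\mathrm{Hes}_\delta(\mu^*)$ directly, exploiting that $H(\rho)$ is concave in $\rho$ and $D_{\mathrm{KL}}(\mu \| \rho \otimes \rho)$ is convex in $\mu$, and verify negative definiteness on the tangent space to $\mathcal{O}'$. Combined with the overlap reduction from the spatial mixing argument, this establishes $\mu^*$ as the unique global maximizer of $\delta$ over $\mathcal{O}'$, closing the proof.
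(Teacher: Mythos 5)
Your computation of the value $\delta(\mu^*,\rho^*)$ via the tensorization $\mu^*_{(\sigma_1,\tau_1,\sigma_2,\tau_2)}=\hat\mu_{\sigma_1\sigma_2}\hat\mu_{\tau_1\tau_2}$ is correct and is a genuinely slicker route to $2\psi(\hat\mu_{++},\hat\rho_+)=(2-d)\log 2+d\log(1+e^{-\beta})$ than the paper's direct evaluation; the identity $\sum_{\sigma\in A_1}\mu^*(\sigma)+2\sum_{\sigma\in A_2}\mu^*(\sigma)=2(\hat\mu_{++}+\hat\mu_{--})$ checks out. Your overall architecture (parametrize by the overlap, solve the constrained inner problem by Lagrange multipliers to get a one-parameter family of conditional optimizers, then optimize the resulting one-dimensional profile) is also exactly the paper's: its $\alpha$ in the reparametrization $\rho_\alpha$ is your $q$, and its \Lem~\ref{lemma_sol_min_sec_moment} is your family $\mu^{(q)}$, whose inner problem is a genuine convex minimization because $g(\mu,\rho_\alpha)$ is convex in $\mu$ for fixed $\alpha$.

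The gap is in the step that actually carries the whole lemma: showing that $q\mapsto\delta(\mu^{(q)},\rho^{(q)})$ is uniquely maximized at $q=0$ for every $\beta<\bks$. You assert this by identifying the fixed-point equations with the broadcasting recursion on the infinite $d$-regular tree and invoking contractivity/non-reconstruction below the Kesten--Stigum bound. That is a heuristic, not a proof: non-reconstruction on the tree does not formally imply that the annealed functional $\delta$ has no rival maximizer at nonzero overlap (indeed the paper's discussion notes that for other models the second moment fails inside the replica symmetric phase, so no such implication can hold in general), and nothing in the paper or in \cite{Coja_2020} is quoted in a form you could black-box here. The paper closes this step by brute force: it computes $\partial^2 f_d/\partial\alpha^2$ in closed form, shows it is \emph{increasing} in $\beta$, and verifies it is negative for all $\alpha\in(-1,1)$ at $\beta=\bks$, whence $f_d(\cdot,\beta)$ is strictly concave on all of $(-1,1)$ for every $\beta<\bks$ and $\alpha=0$ (a critical point by symmetry) is the unique global maximum. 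Your fallback of checking negative definiteness of the $9\times 9$ Hessian at $\mu^*$ cannot substitute for this: since $\delta=(1-d)H(\rho)+\frac d2 H(\mu)-d\beta(\cdots)$ with $d\geq 3$, the function is neither convex nor concave, so a local second-order test at $\mu^*$ yields only a local maximum, and restricting to ``$q$ near zero'' does not by itself exclude maximizers elsewhere in the admissible region. To repair the argument you would need either the explicit concavity computation of \Lem~\ref{lemma_optimal_alpha} or a rigorous substitute for the tree/contractivity claim valid on the whole range of $\alpha$.
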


Having determined the maximum, we next need to evaluate the Hessian at the optimal point. The derivation of the Hessian matrix and evaluation at the optimal point is not too difficult. Thus, we just state the result here and refer the interested reader to \Sec~\ref{sec_hessian}.

\begin{lemma}[Hessian for the second moment]\label{hessian_second_moment}
	We have
	\begin{align*}
		\det \left(-  \mathrm{D}^2 \delta \left( \mu^*, \rho^* \right) \right) = 2^{17} d^6 e^{- 8 \beta} \left( 1 + e^\beta \right)^{16} \left( d e^\beta - d + 2 \right)^2 \left(2 e^{2 \beta} + 2 d e^\beta  - d e^{2 \beta} - d + 2 \right).
	\end{align*}
\end{lemma}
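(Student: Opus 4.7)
The plan is a direct Hessian computation. Since
\[
\delta(\mu,\rho) = (1-d)\,H(\rho) + \tfrac{d}{2}\,H(\mu) - d\beta\bigl(x_3+x_4+x_5+x_6+x_7+x_8+x_9+\mu_{----}\bigr),
\]
only the two entropy terms contribute to $\mathrm{D}^2\delta$, and $-\mathrm{D}^2\delta = (d-1)\,\mathrm{D}^2(-H(\rho)) + \tfrac{d}{2}\,\mathrm{D}^2(-H(\mu))$ is to be evaluated in the independent variables $x_1,\ldots,x_9$ via the linear formulae for $\mu_{----}$ and $\rho_{st}$ stated above.

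Differentiating the Shannon entropy twice is routine. For $H(\mu)$, the linear dependence of $\mu_{----}$ on $(x_1,\ldots,x_9)$ produces a single rank-one piece: with $\vec\alpha = (2,2,2,2,2,2,1,1,1)^{\top}$ and $\vec\gamma = (2,2,2,2,2,2,1,1,1)^{\top}$,
\[
\tfrac{\partial^2 H(\mu)}{\partial x_i \partial x_j} = -\frac{\alpha_i\alpha_j}{\mu_{----}} - \frac{\gamma_i\,\delta_{ij}}{x_i}.
\]
For $H(\rho)$, using $\rho_{st}^*=\tfrac14$ for all four classes $(s,t)\in\{\pm\}^{2}$, each marginal supplies a rank-one term $-4\,\vec w^{(st)}(\vec w^{(st)})^{\top}$ where $\vec w^{(st)}_i = \partial\rho_{st}/\partial x_i$ is read off directly from the marginal formulas. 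At the optimum, the $x_i^*$ collapse onto the three values $a=\tfrac{1}{4(1+e^{-\beta})^2}$, $b=\tfrac{e^{-\beta}}{4(1+e^{-\beta})^2}$ and $c=\tfrac{e^{-2\beta}}{4(1+e^{-\beta})^2}=\mu_{----}^*$, so assembling
\[
-\mathrm{D}^2\delta\big|_* \;=\; \Delta \;+\; \tfrac{d}{2c}\,\vec\alpha\vec\alpha^{\top} \;+\; 4(d-1)\sum_{(s,t)\in\{\pm\}^{2}}\vec w^{(st)}(\vec w^{(st)})^{\top},
\]
where $\Delta=\tfrac{d}{2}\,\mathrm{diag}(\gamma_i/x_i^*)$ is diagonal and invertible.

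With the Hessian written as a diagonal matrix plus five rank-one pieces, the matrix determinant lemma collapses the $9\times 9$ problem to a $5\times 5$ determinant
\[
\det(-\mathrm{D}^2\delta|_*) \;=\; \det(\Delta)\cdot\det\bigl(I_5 + U^{\top}\Delta^{-1}U\bigr),
\]
where $U$ is the $9\times 5$ matrix collecting the five rank-one generators (suitably rescaled by $\sqrt{d/(2c)}$ and $2\sqrt{d-1}$). Computing $\det(\Delta)$ using the explicit values of $\gamma_i/x_i^*$ yields the bulk of the prefactor $2^{17}d^6 e^{-8\beta}(1+e^\beta)^{16}$, while the $5\times 5$ determinant produces the non-trivial factor $(de^\beta-d+2)^2\bigl(2e^{2\beta}+2de^\beta-de^{2\beta}-d+2\bigr)$. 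The residual $5\times 5$ computation is greatly simplified by the $\mathbb{Z}_2\times\mathbb{Z}_2$ symmetry at the optimum generated by the global spin flips $\sigma\mapsto-\sigma$ and $\tau\mapsto-\tau$: these permute the four vectors $\vec w^{(st)}$ while fixing $\vec\alpha$ and $\Delta^{-1}$, and hence block-diagonalise $I_5 + U^\top\Delta^{-1}U$ into sub-blocks of dimension at most two. The main obstacle is therefore purely algebraic: verifying that the cross-terms between the five rank-one pieces simplify cleanly under the symmetry to give exactly the stated product, and that no spurious $\beta$-dependent factors survive after substituting $a,b,c$ in terms of $e^{-\beta}$.
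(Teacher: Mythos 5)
Your overall architecture --- writing $-\mathrm{D}^2\delta$ at the optimum as an invertible diagonal matrix plus a low-rank update and collapsing the $9\times 9$ determinant with the matrix determinant lemma --- is a legitimate and arguably better-organised route than the paper's, which tabulates every second partial derivative, displays $\mathrm{D}^2\delta(\mu^*,\rho^*)$ as a sum of three explicit $9\times 9$ matrices and then simply asserts that the lemma ``follows from calculating the determinant''. However, your central decomposition carries a sign error that would change the answer. Since $\delta=(1-d)H(\rho)+\tfrac d2 H(\mu)-d\beta(\cdots)$ and $d\ge 3$, the $\rho$-entropy enters with the \emph{negative} coefficient $1-d$, so
\[
-\mathrm{D}^2\delta\big|_{*}\;=\;\tfrac d2\,\mathrm{D}^2(-H(\mu))\big|_{*}\;-\;(d-1)\,\mathrm{D}^2(-H(\rho))\big|_{*}\;=\;\Delta+\tfrac{d}{2c}\,\vec\alpha\vec\alpha^{\top}\;-\;4(d-1)\sum_{(s,t)\in\{\pm\}^2}\vec w^{(st)}(\vec w^{(st)})^{\top},
\]
i.e.\ the four marginal rank-one pieces must be \emph{subtracted}. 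A single entry confirms this: $\partial^2\delta/\partial x_9\partial x_8=(1-d)(-1/\rho_{--})+\tfrac d2(-1/\mu_{----})=4(d-1)-\tfrac{d}{2c}$ at the optimum, so $(-\mathrm{D}^2\delta)_{98}=\tfrac{d}{2c}-4(d-1)$, whereas your formula gives $\tfrac{d}{2c}+4(d-1)$. With the ``$+$'' sign your bordered matrix $I_5+U^{\top}\Delta^{-1}U$ computes the determinant of a different matrix and will not return the stated value; with the correct sign the Woodbury/determinant-lemma reduction still applies verbatim (one simply carries a signature matrix $S=\mathrm{diag}(\tfrac{d}{2c},-4(d-1),\dots)$ in $\det(I+SU^{\top}\Delta^{-1}U)$).

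Beyond the sign, two caveats. Since $\sum_{(s,t)}\rho_{st}\equiv 1$ forces $\sum_{(s,t)}\vec w^{(st)}=0$, the marginal update has rank at most $3$, and the advertised block-diagonalisation by the spin flips needs more care than you suggest: in the reduced coordinates $(x_1,\dots,x_9)$ the flips act affinely (for instance $\sigma\mapsto-\sigma$ sends $x_9=\mu_{++++}$ to $x_8=\mu_{-+-+}$ but sends $\mu_{----}$ to $x_7$), not as a coordinate permutation, so it is not immediate that they fix $\Delta$ and $\vec\alpha$. More importantly, for a lemma whose entire content is an explicit identity, you stop exactly where the work begins: the residual $5\times5$ (really $4\times4$) determinant that must produce $(de^{\beta}-d+2)^2(2e^{2\beta}+2de^{\beta}-de^{2\beta}-d+2)$ is never evaluated. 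Your bookkeeping for the diagonal part is consistent --- $\det(\Delta)=2^{15}d^{9}e^{-8\beta}(1+e^{\beta})^{18}$ at the optimum, so the bordered determinant must supply exactly $4d^{-3}(1+e^{\beta})^{-2}$ times the cubic-in-$d$ factor --- but until that computation is carried out with the corrected signs, the lemma is not established. (To be fair, the paper leaves the final determinant evaluation implicit as well; your reduction, once the sign is fixed, is the more checkable of the two.)
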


\begin{proof}[Proof of \Prop~\ref{prop_second_moment_pairing_model}]
With \Lem s~\ref{sec_mom_optimum} and \ref{hessian_second_moment} in place, we still need to determine the lattice matrix and its determinant. Similar to the notation for the first moment, we let $A_{\textrm{second}}$ denote the matrix consisting of the elements in the basis of the lattice for the second moment. Recalling the following definitions
\begin{align*}
	x_1 &= \mu_{+--+} =  \mu_{-++-} = \frac{1}{d n} \sum_{(u,v) \in E} \vecone \left\lbrace \sigma (u) = \tau (v) \neq \tau (u) = \sigma(v)\right\rbrace  \\
	x_3 &= \mu_{+++-} =  \mu_{+-++} = \frac{1}{d n} \sum_{(u,v) \in E} \vecone \left\lbrace \sigma (u) = \sigma (v) = +1  \wedge \tau (u) \neq \tau(v)\right\rbrace \\
	x_4 &= \mu_{++-+} =  \mu_{-+++} = \frac{1}{d n} \sum_{(u,v) \in E} \vecone\left\lbrace \sigma (u) \neq \sigma (v)   \wedge \tau (u) = \tau(v) = +1\right\rbrace  \\
	x_7 &= \mu_{+-+-}  = \frac{2}{d n} \sum_{(u,v) \in E} \vecone \left\lbrace \sigma (u) = \sigma (v)=+1   \wedge \tau (u) = \tau(v) = -1\right\rbrace\\
	x_8 &= \mu_{-+-+} = \frac{2}{d n} \sum_{(u,v) \in E} \vecone\left\lbrace \sigma (u) = \sigma (v)=-1   \wedge \tau (u) = \tau(v) = +1\right\rbrace\\
	x_9 &= \mu_{++++} = \frac{2}{d n} \sum_{(u,v) \in E} \vecone \left\lbrace \sigma (u) = \sigma (v)= \tau (u) = \tau(v) = +1\right\rbrace
\end{align*}
we immediately obtain the diagonal entries for the respective $x$'s, i.e $\frac{1}{d}$ and $\frac{2}{d}$. From here on, things get more complicated. Since $\rho_{++}, \rho_{+-}, \rho_{-+},$ and $\rho_{--}$ each count fractions of the set of nodes (which contains $n$ nodes in total) their entries in the lattice matrix all have to be multiples of $\frac{1}{n}$. Furthermore, we recall the following binding conditions
\begin{align*}
	\rho_{++} &= x_9 + x_3 +  x_4 + x_2 = x_2 + x_3 + x_4 + x_9\\
	\rho_{+-} &= x_3 + x_7 + x_1 + x_5= x_1 + x_3 + x_5 + x_7  \\
	\rho_{-+} &=  x_4+ x_1 + x_8 + x_6 = x_1  + x_4 + x_6 + x_8.
\end{align*}
Combining these two points, $x_2, x_5,$ and $x_6$ each need to be chosen such that the sums consisting of four summands each add up to a number that is a multiple of $\frac{1}{n}$. Let us focus on $x_2$. Similar arguments apply to $x_5$ and $x_6$. For $x_2$, the above equation can be reformulated as
\begin{align*}
	x_2 = \rho_{++} -  x_3 -  x_4 -x_9 = \frac{b_2}{n} -  \frac{b_3}{d n} - \frac{b_4}{d n} - \frac{2 \cdot b_9}{d n}
\end{align*}
where $b_i \in \mathbb{N}, i \in [9]$ are the scalars for the linear combination yielding the desired $\mu$. From the reformulated equation we immediately obtain the matrix entries $A_{\textrm{second}, 2,2} =1$, $A_{\textrm{second}, 2,3} = - 1/d$, $A_{\textrm{second}, 2,4} = -1/d$, and $A_{\textrm{second}, 2,9} = -2/d$. Following through this procedure for $x_5$ and $x_6$, we obtain the remaining entries of $A_{\textrm{second}}$ that are different from zero. This enables us to calculate the determinant of interest:
\begin{align} \label{lattice_second_moment}
	\det \left( A_{\textrm{second}} \right)= \det  \begin{pmatrix} 
		\frac{1}{d} & 0 & 0& 0 & 0 & 0 & 0 & 0 & 0 \\ 
		0 & 1& - \frac{1}{d} &- \frac{1}{d}& 0 & 0 & 0 & 0 & - \frac{2}{d}  \\ 
		0 & 0 & \frac{1}{d} & 0 & 0 & 0& 0& 0 &0  \\
		0 & 0 & 0 & \frac{1}{ d} & 0 &0&0&0 &0  \\
		- \frac{1}{d}& 0 &- \frac{1}{d}& 0& 1& 0&- \frac{2}{d}& 0 &0  \\
		- \frac{1}{d} & 0 & 0 & - \frac{1}{d} & 0 & 1& 0& - \frac{2}{d}&0  \\
		0 & 0 & 0 & 0 &0 & 0& \frac{2}{d}& 0 &0  \\
		0 & 0& 0 &0& 0 & 0& 0& \frac{2}{d} &0  \\
		0 &0 & 0 & 0 & 0 & 0& 0& 0 &\frac{2}{d}  \\
	\end{pmatrix}  = \frac{2^3}{d^6}.
\end{align}

With these results in place, we can apply \Thm~\ref{thm_janson} to the expression for the second moment in \Lem~\ref{second_moment_simplified} which yields
	\begin{align*}
		\mathbb{E}\left[ Z_{\Gbold, \beta}^2 \vecone { \{\mathcal{O} \}}  \right] 
		&= \exp \left( O \left( \frac{1}{n}\right)\right) \cdot  \sum_{\mu \in \mathcal{U}} 	\frac{1}{8 d^3 \pi^\frac{9}{2} n^\frac{9}{2} \sqrt{\prod_{\sigma \in B}\mu_{\sigma}}}\exp \left(n \delta \left( \mu, \rho \right)  \right) \\
		&= \exp \left( O \left( \frac{1}{n}\right)\right) \cdot  	\frac{\left( 2 \pi n \right)^{\frac{9}{2}}  \exp \left(n \delta \left( \mu^*, \rho^* \right)  \right)}{8 d^3 \pi^\frac{9}{2} n^\frac{9}{2} \sqrt{\prod_{\sigma \in B}\mu^*_{\sigma}} \det \left( A_{\textrm{second}} \right)  \sqrt{	\det \left(-  \mathrm{D}^2 \delta \left( \mu^*, \rho^* \right) \right)}}.
	\end{align*}
Using \Lem s~\ref{sec_mom_optimum} and \ref{hessian_second_moment} and the determinant of the lattice matrix from \eqref{lattice_second_moment}, we arrive at
	\begin{align*}
		\mathbb{E}\left[ Z_{\Gbold, \beta}^2  \vecone { \{\mathcal{O} \}} \right] 
		&= \exp \left( O \left( \frac{1}{n}\right)\right) \cdot  	\frac{ \left( 1 + e^{- \beta}\right)^{10}  \exp \left(n \left( \left( 2 - d\right)  \log \left( 2\right) + d \log \left(1 + e^{- \beta}  \right) \right)   \right)}{  e^{- 10 \beta} 	\left( 1 + e^\beta \right)^{8}  \left( d e^\beta - d + 2 \right) \sqrt{\left(2 e^{2 \beta} + 2 d e^\beta  - d e^{2 \beta} - d + 2 \right)}}\\
		&= \exp \left( O \left( \frac{1}{n}\right)\right) \cdot  	\frac{ \left( 1 + e^{ \beta}\right)^{2}  \exp \left(n \left( \left( 2 - d\right)  \log \left( 2\right) + d \log \left(1 + e^{- \beta}  \right) \right)   \right)}{  \left( d e^\beta - d + 2 \right) \sqrt{\left(2 e^{2 \beta} + 2 d e^\beta  - d e^{2 \beta} - d + 2 \right)}}.
	\end{align*}
\end{proof}

\subsection{The simple d-regular case}

Having established the second moment in the pairing model $\Gbold$,  we still have to adapt the result to the $d$-regular model $\G$ of interest. As we will see, a pairing variant (not the same as for the first moment) $\Gptwo$ of the planted model will be a useful tool to do so. The pairing variant $\Gptwo$ is defined as follows. First, draw two  spin assignments $\SIGMA^*, \TAU^* \in \left\lbrace \pm 1 \right\rbrace^n$ independently and uniformly at random. Then, draw a graph $\Gptwo$ according to the probability distribution
\begin{align*}
	\mathbb{P} \brk{ \Gptwo = G \vert \SIGMA^*, \TAU^* } \propto \exp \bc{ - \beta \mathcal{H}_G \bc{ \SIGMA^*} - \beta \mathcal{H}_G \bc{ \TAU^*} }.
\end{align*}
where $G$ might again feature self-loops and double-edges. With some effort, we obtain the next result.

\begin{lemma} \label{prob_sec_pairing_model_simple}
	For $d \geq 0$ and $\beta > 0$ we have 
	\begin{equation*}
		\Pr \brk{\Gptwo \mathrm{\ is \ simple}} \sim \exp \bc{- \bc{d -1} \frac{2}{\bc{1 + e^\beta}^2} - \bc{d -1}^2 \frac{\bc{1 + e^{2 \beta}}^2}{\bc{1 + e^\beta}^4}}.
	\end{equation*}
\end{lemma}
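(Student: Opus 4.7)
The plan is to prove joint Poisson convergence for the pair $(L_n, D_n)$ counting self-loops and double edges of $\Gptwo$, with parameters $\lambda_L = 2(d-1)/(1+e^\beta)^2$ and $\lambda_D = (d-1)^2(1+e^{2\beta})^2/(1+e^\beta)^4$. Since $\Gptwo$ is simple iff $L_n = D_n = 0$, convergence in distribution of $(L_n, D_n)$ to a pair of independent Poissons would yield $\Pr \brk{\Gptwo \text{ is simple}} \to \exp(-\lambda_L - \lambda_D)$ as required. The strategy mirrors \Lem~\ref{prob_planted_pairing_model_simple} for $\Gpone$, now adapted to two independent planted spins.

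Conditional on $(\SIGMA^*, \TAU^*) = (\sigma, \tau)$, the matching of the $dn$ half-edges in $\Gptwo$ is drawn with Gibbs weight $\prod_{e \in M} w(e)$, where $w(\{a,b\}) = \exp(-\beta \vecone\{\sigma_{v(a)} = \sigma_{v(b)}\} - \beta \vecone\{\tau_{v(a)} = \tau_{v(b)}\})$. Under uniform $(\sigma,\tau)$, independence of the $\sigma$- and $\tau$-factors gives an average edge weight between independent endpoints of $\bar w = ((1+e^{-\beta})/2)^2$. The analytical input needed is that, for any fixed collection of pairwise disjoint half-edge pairs $\{a_i, b_i\}$, the probability that every pair belongs to the planted matching factorises asymptotically as $\prod_i w(a_i,b_i)/(dn\,\bar w)$. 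Given this, I would first compute $\Erw[L_n]$: each loop has weight $e^{-2\beta}$ irrespective of the vertex label, so $\Erw[L_n] \sim n\binom{d}{2} \cdot e^{-2\beta}/(dn\bar w) = 2(d-1)/(1+e^\beta)^2$. For $\Erw[D_n]$, the two matched edges of a double edge together contribute weight $w(u,v)^2$; averaging $w(u,v)^2$ over uniform labels yields $((1+e^{-2\beta})/2)^2$, and summing over the $\binom{n}{2}\cdot 2\binom{d}{2}^2$ double-edge configurations combined with the identity $(1+e^{-2\beta})^2/(1+e^{-\beta})^4 = (1+e^{2\beta})^2/(1+e^\beta)^4$ gives $\lambda_D$.

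To upgrade to joint Poisson convergence I would extend the calculation to joint factorial moments $\Erw[(L_n)_j (D_n)_k]$ by summing over pairwise disjoint collections of loop and double-edge configurations; the same factorisation then delivers $\Erw[(L_n)_j (D_n)_k] \to \lambda_L^j \lambda_D^k$. The standard moment criterion for Poisson convergence yields $(L_n, D_n) \xrightarrow{d} (\Po(\lambda_L), \Po(\lambda_D))$ with independent marginals, and the atom at $(0,0)$ delivers the claim.

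The main obstacle is the rigorous justification of the mean-field factorisation of marginal pair probabilities, equivalently, control of the ratios $Z/Z_{-2k}$ of partition functions when a bounded set of $2k$ half-edges is removed from the weighted pairing model. A natural route is to expand the weighted pairing partition function over the joint type profile of matched half-edges (in the spirit of \Lem~\ref{second_moment_start}) and invoke Laplace's method (\Thm~\ref{theoremGJR}) to show that the maximiser is the fully symmetric profile, so that excising $O(1)$ half-edges perturbs $Z$ only by the multiplicative factor $(dn\bar w)^k (1+o(1))$. With this technical ingredient in hand, the remaining steps are the direct factorial-moment computations sketched above, paralleling the proof of \Lem~\ref{prob_planted_pairing_model_simple} line for line.
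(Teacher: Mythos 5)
Your proposal is correct and follows the same overall strategy as the paper: show that the numbers of self-loops and double edges of $\Gptwo$ are asymptotically independent Poisson variables with means $2(d-1)/(1+e^\beta)^2$ and $(d-1)^2(1+e^{2\beta})^2/(1+e^\beta)^4$ via joint factorial moments (overlapping configurations contributing only $O(1/n)$), and read off the probability of the atom at $(0,0)$. The only real difference is how the first moments are computed. The paper conditions on the exact edge-type counts $\mathcal{G}(\sigma,\mu)$, evaluates the conditional expectation of each loop and double-edge type exactly with binomial coefficients and double factorials (e.g.\ $\Erw\brk{X_{++}\mid\mathcal{G}(\sigma,\mu)}\sim \mu_{++++}(d-1)/(2\rho_{++})$), sums over types, and then substitutes the optimal profile $(\mu^*,\rho^*)$ from \Lem~\ref{sec_mom_optimum}. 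You instead posit a mean-field factorisation of the pair marginals of the Gibbs-weighted pairing with normaliser $dn\bar w$, $\bar w=((1+e^{-\beta})/2)^2$. These are equivalent: substituting $(\mu^*,\rho^*)$ into the paper's conditional pair probability gives precisely $w(a,b)/(dn\bar w)$, and both arguments rest on the same underlying input, namely concentration of the type profile of $\Gptwo$ at $(\mu^*,\rho^*)$ (the two-replica analogue of \Lem~\ref{A_mu_high_prob_event}), which the paper invokes implicitly when it ``plugs in the optimal point'' and which you propose to establish via the second-moment Laplace analysis --- the same machinery. Your arithmetic checks out, including the identity $(1+e^{-2\beta})^2/(1+e^{-\beta})^4=(1+e^{2\beta})^2/(1+e^\beta)^4$. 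What your formulation buys is an explicit isolation of the single analytic ingredient (control of the partition-function ratios $Z/Z_{-2k}$); what the paper's buys is that, after conditioning on $\mu$, every pair probability is an exact finite formula and no such ratios need to be discussed.
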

\begin{proof}[Proof of Lemma \ref{prob_sec_pairing_model_simple}]
	This proof is based on an idea in \cite{Coja_2020} (Lemma 4.6). First of all, we are interested in the number of self-loops $X$ in $\Gptwo$ on the one hand, and the number of double edges $Y$ on the other hand. For notational convenience, we let $\mathcal{G} \bc{\sigma, \mu}$ be the event that the generated graph has $\frac{d n}{2} \mu_{++++}$ edges that connect two vertices that each have been assigned two positive spins; the same is assumed to hold for all entries of $\mu$ and the respective types of edges. With these definitions in place, we move on to the expectations of $X$ and $Y$. Instead of calculating the two directly, we decompose the two to simplify the following calculations.
	
	So let us start with the number of self-loops $X$. Basically, there are four different types of self-loops in our model, $X_{++},X_{+-},X_{-+},$ and $X_{--}$. The index in each of the four cases just refers to the spin pair assigned to the vertex of the self-loop. Then, the expectation of $X_{++}$ can be formulated as
	\begin{align*}
		\Erw \brk{X_{++} \vert \mathcal{G} \bc{\sigma, \mu}} &= \frac{\rho_{++} n \binom{d}{2} \binom{d n \rho_{++} -2 }{d n \mu_{++++} -2} \bc{d n \mu_{++++} -3}!!}{\binom{d n \rho_{++}}{d n \mu_{++++}} \bc{d n \mu_{++++} -1}!!}
		= \frac{\rho_{++} n \frac{d !}{2 (d -2)!} \frac{\bc{d n \rho_{++} -2 }!}{\bc{d n \mu_{++++} -2}! \bc{d n \rho_{++} - d n \mu_{++++}}!}}{ \frac{\bc{d n \rho_{++} }!}{\bc{d n \mu_{++++}}! \bc{d n \rho_{++} - d n \mu_{++++}}!} \bc{d n \mu_{++++} -1} }\\
		&= \frac{n \mu_{++++} d (d-1)}{2 \bc{d n \rho_{++} - 1}} \sim \frac{\mu_{++++} (d -1)}{2 \rho_{++}}
	\end{align*}
	where, in the first step, we already cancelled out the factors that appeared both in the numerator and denominator. By almost identical calculations, we obtain
	\begin{align*}
		\Erw \brk{X_{+-} \vert \mathcal{G} \bc{\sigma, \mu}} &\sim \frac{\mu_{+-+-} (d -1)}{2 \rho_{+-}}, \qquad
		\Erw \brk{X_{-+} \vert \mathcal{G} \bc{\sigma, \mu}} \sim \frac{\mu_{-+-+} (d -1)}{2 \rho_{-+}}, \\
		&\text{and} \qquad \Erw \brk{X_{--} \vert \mathcal{G} \bc{\sigma, \mu}} \sim \frac{\mu_{----} (d -1)}{2 \rho_{--}}.
	\end{align*}
	Bringing these four results together and plugging in the optimal point $\bc{\mu^*, \rho^*}$, we arrive at
	\begin{align}\label{expectation_self_loops}
		\Erw \brk{X \vert \mathcal{G} \bc{\sigma, \mu}} \sim \bc{d -1} \frac{2}{\bc{1 + e^\beta}^2}.
	\end{align}
	
	With a similar argument, we determine the expectation of the number of double edges $Y$. More precisely, we decompose $Y$ into the random variables $Y_{\sigma_1, \tau_1, \sigma_2, \tau_2}$ with $\bc{\sigma_1, \tau_1, \sigma_2, \tau_2} \in \cbc{\pm 1}^4$. Each $Y_{\sigma_1, \tau_1, \sigma_2, \tau_2}$ is just the number of double edges between two vertices where the first vertex is assigned to the spin-pair $\bc{\sigma_1, \tau_1}$ and the second to the pair $\bc{\sigma_2, \tau_2}$. Let us start with the four spin configurations with $\bc{\sigma_1, \tau_1} = \bc{\sigma_2, \tau_2}$. In order to keep the calculations simple, we focus on $Y_{++++}$ and then extend the results to $Y_{+-+-}$, $Y_{-+-+}$, and $Y_{----}$.
	\begin{align*}
		\Erw \brk{Y_{++++} \vert \mathcal{G} \bc{\sigma, \mu}} &= \frac{2 \binom{\rho_{++} n}{2} \binom{d}{2}^2 \binom{d n \rho_{++} -4}{d n \mu_{++++} -4} \bc{d n \mu_{++++} -5}!!}{\binom{d n \rho_{++}}{d n \mu_{++++}} \bc{d n \mu_{++++} -1}!!}\\
		&\sim \frac{\bc{ \rho_{++} n}^2 \bc{\frac{d (d-1)}{2}}^2 \bc{d n \mu_{++++}}^4}{\bc{ \rho_{++} d n}^4 \bc{d n \mu_{++++}}^2} = \frac{(d-1)^2}{4} \frac{\mu_{++++}^2}{\rho_{++}^2}
	\end{align*}
	where, in the first step, we already cancelled out the factors that occured both in the numerator and denominator. Following this line of thought, we can also state
	\begin{align*}
		\Erw \brk{Y_{+-+-} \vert \mathcal{G} \bc{\sigma, \mu}} &\sim \frac{(d-1)^2}{4} \frac{\mu_{+-+-}^2}{\rho_{+-}^2}, \qquad
		\Erw \brk{Y_{-+-+} \vert \mathcal{G} \bc{\sigma, \mu}} \sim \frac{(d-1)^2}{4} \frac{\mu_{-+-+}^2}{\rho_{-+}^2}, \\
		&\text{and} \qquad \Erw \brk{Y_{----} \vert \mathcal{G} \bc{\sigma, \mu}} \sim \frac{(d-1)^2}{4} \frac{\mu_{----}^2}{\rho_{--}^2}.
	\end{align*}
	
	For the next calculation, we consider the sum of $Y_{+---}$ and $Y_{--+-}$. Since the edges in our model are undirected, it is not suitable to make a distinction between the two.
	\begin{align*}
		\Erw \brk{Y_{+---} + Y_{--+-} \vert \mathcal{G} \bc{\sigma, \mu}} &= \frac{2 \rho_{+-} \rho_{--} n^2 \binom{d}{2}^2 \binom{d n \rho_{+-} -2}{d n \mu_{+---} -2} \binom{d n \rho_{--} -2}{d n \mu_{+---} -2} \bc{d n \mu_{+---} -2}!}{\binom{d n \rho_{+-}}{d n \mu_{+---}} \binom{d n \rho_{--}}{d n \mu_{+---}}\bc{d n \mu_{+---}}!}\\
		&\sim \frac{2 \rho_{+-} \rho_{--} n^2 \bc{\frac{d (d-1)}{2}}^2 \bc{d n \mu_{+---}}^4}{\bc{ \rho_{+-} d n}^2 \bc{ \rho_{--} d n}^2 \bc{d n \mu_{+---}}^2}
		= \frac{(d-1)^2}{4} \frac{\mu_{+---}^2}{\rho_{+-} \rho_{--}}
	\end{align*}
	Here, we once again tacitly cancelled out the factors in the first expression that are included in both the numerator and denominator. The same approach can be iteratively applied to the remaining types of double edges, which eventually yields
	\begin{align*}
		\Erw \brk{Y_{-+--} + Y_{---+} \vert \mathcal{G} \bc{\sigma, \mu}} &\sim \frac{(d-1)^2}{4} \frac{\mu_{-+--}^2}{\rho_{-+} \rho_{--}}, \qquad
		\Erw \brk{Y_{+++-} + Y_{+-++} \vert \mathcal{G} \bc{\sigma, \mu}} \sim \frac{(d-1)^2}{4} \frac{\mu_{+++-}^2}{\rho_{++} \rho_{+-}}, \\
		\Erw \brk{Y_{-+++} + Y_{++-+} \vert \mathcal{G} \bc{\sigma, \mu}} &\sim \frac{(d-1)^2}{4} \frac{\mu_{++-+}^2}{\rho_{++} \rho_{-+}}, \qquad
		\Erw \brk{Y_{++--} + Y_{--++} \vert \mathcal{G} \bc{\sigma, \mu}} \sim \frac{(d-1)^2}{4} \frac{\mu_{++--}^2}{\rho_{++} \rho_{--}}, \\
		&\text{and} \qquad \Erw \brk{Y_{+--+} + Y_{-++-} \vert \mathcal{G} \bc{\sigma, \mu}} \sim \frac{(d-1)^2}{4} \frac{\mu_{+--+}^2}{\rho_{+-} \rho_{-+}}.
	\end{align*}
	Taking the sum of all these findings and plugging in the optimal point $\bc{\mu^*, \rho^*}$, we finally obtain
	\begin{align}\label{expectation_double_edges}
		\Erw \brk{Y \vert \mathcal{G} \bc{\sigma, \mu}} \sim \bc{d -1}^2 \frac{\bc{1 + e^{2 \beta}}^2}{\bc{1 + e^\beta}^4}.
	\end{align}
	With the statements \eqref{expectation_self_loops} and \eqref{expectation_double_edges} in mind, we claim that for all $k, \ell \geq  1$
	\begin{align}\label{expectation_loop_self_edge_combined}
		\Erw \brk{ \prod_{i = 1}^k \bc{X -i + 1}  \prod_{j = 1}^\ell \bc{Y - j + 1}} 
		\sim  \bc{\bc{d -1} \frac{2}{\bc{1 + e^\beta}^2}}^k \bc{\bc{d -1}^2 \frac{\bc{1 + e^{2 \beta}}^2}{\bc{1 + e^\beta}^4}}^\ell
	\end{align}
	holds. This can be seen as follows. In \eqref{expectation_self_loops} and \eqref{expectation_double_edges}, we placed just one loop or double edge, respectively. To obtain \eqref{expectation_loop_self_edge_combined}, we now have to place some fixed numbers $k$ and $\ell$ of self-loops and double edges. Since $n$ approaches infinity, the probability that any choices of self-loops and double-edges overlap is bounded by $O (1/n)$. Thus, the desired result can be leveraged from \eqref{expectation_self_loops} and \eqref{expectation_double_edges}.
	
	With  \eqref{expectation_loop_self_edge_combined} in place, we immediately see
	\begin{equation*}
		\Pr \brk{\Gptwo \in \mathcal{S}} = \Pr \brk{X =Y =0} \sim \exp \bc{- \bc{d -1} \frac{2}{\bc{1 + e^\beta}^2} - \bc{d -1}^2 \frac{\bc{1 + e^{2 \beta}}^2}{\bc{1 + e^\beta}^4}}.
	\end{equation*}
	which concludes the proof.
\end{proof}
Now, we are equipped to prove Proposition \ref{prop_second_moment}.

\begin{proof}[Proof of Proposition \ref{prop_second_moment}]
We again use the asymptotic equality
	\begin{equation}\label{second_moment_pairing_regular_connection}
		\Erw \left[ Z_{\G, \beta}^2  \vecone { \{\mathcal{O} \}}   \right] \sim  \frac{\Pr \brk{\Gptwo \mathrm{\  is \ simple}}}{\Pr \brk{\Gbold \textrm{ is simple}}}\Erw \left[ Z_{\Gbold, \beta}^2  \vecone { \{\mathcal{O} \}}   \right].
	\end{equation}
Thus, the desired result is obtained by combining equation \eqref{second_moment_pairing_regular_connection}, \Prop~\ref{prop_second_moment_pairing_model}, Lemma \ref{prob_sec_pairing_model_simple}, and Fact \ref{prob_pairing_model_simple}.
\end{proof}

\section{Second Moment Optimization / Proof of \Lem~\ref{sec_mom_optimum}}\label{section_maximization_sec_mom}

In this section we solve the maximization problem 
\begin{align*}
	\max_{\mu \in \mathcal{O}'} \delta\left( \mu, \rho\right)  
\end{align*}
where
\begin{align*}
	\delta \left( \mu, \rho \right)   &:= \textrm{H}\left( \rho\right) -\frac{d}{2} \left( D_\textrm{KL} \left( \mu \vert\vert \rho \otimes \rho\right) + \beta   \sum_{\sigma \in A_1}  \mu (\sigma) + 2 \beta  \sum_{\sigma \in A_2}  \mu(\sigma)\right) \qquad \text{and} \\
    A_1 &:= \left\lbrace x \in \{\pm 1\}^4: \sigma_1 = \sigma_2 \textrm{ and } \tau_1 \neq \tau_2 \right\rbrace \cup \left\lbrace x \in \{\pm 1\}^4: \sigma_1 \neq \sigma_2 \textrm{ and } \tau_1 = \tau_2 \right\rbrace \qquad \text{and} \\
    A_2 &:= \left\lbrace x \in \{\pm 1\}^4: \sigma_1 = \sigma_2 \textrm{ and } \tau_1 = \tau_2 \right\rbrace.
\end{align*}
with vectors of the form $\sigma = (\sigma_1, \tau_1, \sigma_2, \tau_2) \in \{\pm 1\}^4$. Furthermore, $\mathcal{O}'$ denotes the set of all $\mu$ that are conceivable given that the event $\mathcal{O}$ from \eqref{eq_def_O} holds. At this point, we exploit the spatial mixing argument. Keeping Lemma \ref{lem_o} in mind, we limit our attention to the event $\mathcal{O}$. This is a crucial step for the following calculations because it allows us to perform the reparametrization 
\begin{align} \label{eq_rho_alpha}
    \rho_\alpha (\sigma_1, \tau_1) := \frac{1 + \alpha \cdot  \vecone \cbc{\sigma_1 = \tau_1} - \alpha \cdot  \vecone \cbc{\sigma_1 \neq \tau_1}}{4} 
\end{align}
where $\alpha \in (-1, +1)$ and $(\sigma_1, \tau_1) \in \{\pm 1\}^2$.
Now, the proof strategy is as follows. First, we minimize $\delta$ with respect to $\mu$. This will provide us with a solution of $\mu$ formulated in terms of $\rho$ or $\alpha$, respectively. In a second step, all that remains to do is to maximize the function $\delta$ with respect to $\alpha$.

\subsection{Minimization with respect to $\mu$}
Instead of solving the optimization in one step, we start by considering
\begin{align*}
g(\mu, \rho_\alpha):= D_\textrm{KL} (\mu \vert\vert \rho_\alpha \otimes \rho_\alpha) + \beta\cdot \sum_{\sigma \in A_1}  \mu (\sigma) + 2 \cdot \beta \cdot \sum_{\sigma \in A_2}  \mu(\sigma)    
\end{align*}
where $A_1$ is defined by
\begin{align*}
A_1 := \left\lbrace x \in \{\pm 1\}^4: \sigma_1 = \sigma_2 \textrm{ and } \tau_1 \neq \tau_2 \right\rbrace \cup \left\lbrace x \in \{\pm 1\}^4: \sigma_1 \neq \sigma_2 \textrm{ and } \tau_1 = \tau_2 \right\rbrace    
\end{align*}
and $A_2$ is given by
\begin{align*}
A_2 := \left\lbrace x \in \{\pm 1\}^4: \sigma_1 = \sigma_2 \textrm{ and } \tau_1 = \tau_2 \right\rbrace.    
\end{align*}
Note that the entropy term $H(\rho)$ is independent of $\mu$ and thus not relevant for optimizing with respect to $\mu$.
The above formulation immediately brings us to the constrained  minimization problem
\begin{align*}
	&\min_{\mu \in \mathcal{O}'} g(\mu,  \rho_\alpha)\\
	\textrm{ s.t. } \forall \left( \sigma_1, \tau_1 \right) \in  &\{\pm 1\}^2: \sum_{\left( \sigma_2, \tau_2 \right) \in  \{\pm 1\}^2} \mu (\sigma_1, \tau_1, \sigma_2, \tau_2) = \rho_\alpha \left( \sigma_1, \tau_1 \right)  \\
	\forall \left( \sigma_2, \tau_2 \right) \in  &\{\pm 1\}^2: \sum_{\left( \sigma_1, \tau_1 \right) \in  \{\pm 1\}^2} \mu (\sigma_1, \tau_1, \sigma_2, \tau_2) = \rho_\alpha \left( \sigma_2, \tau_2 \right)
\end{align*}
where $\mathcal{P}\left( \{\pm 1\}^4  \right)$ denotes the set of all probability distributions on $\{\pm 1\}^4$.  For ease of notation, we will drop the index $\alpha$ and just write $\rho$. As a first step, we point out that due to symmetry the optimal $\mu^*$ will have the following properties:
\begin{align*}
	\mu^*_{++++} &= \mu^*_{----}, \qquad \mu^*_{++--} = \mu^*_{--++}, \\
	\mu^*_{+-+-} &= \mu^*_{-+-+}, \qquad \mu^*_{+--+} = \mu^*_{-++-},\\
	\mu^*_{+++-} = \mu^*_{++-+}&= \mu^*_{+-++} = \mu^*_{-+++} =	\mu^*_{---+} = \mu^*_{--+-}= \mu^*_{-+--} = \mu^*_{+---}.
\end{align*}
From the above reparametrization, we additionally emphasize that both
\begin{align*}
\rho_{++} = \rho_{--}  \hspace{3 em}  \textrm{ and } \hspace{3 em} \rho_{+-} = \rho_{-+}    
\end{align*}
hold irrespective of the chosen $\alpha$. This fact directly entails that setting up the Lagrangian function for our minimization problem will only require two distinct Lagrangian multipliers, namely $\lambda_{++}$ and $\lambda_{+-}$. Put differently, we are going to consider the following Lagrangian function $\mathcal{L}$:
\begin{align*}
	\mathcal{L} \left( \mu, \lambda_{++}, \lambda_{+-}  \right) := 
	&g(\mu,  \rho_\alpha) - \lambda_{++} \cdot \left( \sum_{\left(  \sigma_1, \tau_1 \right) \in \left\lbrace (-1, -1), (+1, +1) \right\rbrace } \left[ \sum_{\left( \sigma_2, \tau_2 \right) \in  \{\pm 1\}^2} \mu \left(\sigma_1, \tau_1, \sigma_2, \tau_2 \right)\right]  - \rho_\alpha \left( \sigma_1, \tau_1 \right)\right) \\
	&- \lambda_{++} \cdot \left( \sum_{\left(  \sigma_2, \tau_2 \right) \in \left\lbrace (-1, -1), (+1, +1) \right\rbrace } \left[ \sum_{\left( \sigma_1, \tau_1 \right) \in  \{\pm 1\}^2} \mu \left(\sigma_1, \tau_1, \sigma_2, \tau_2 \right)\right]  - \rho_\alpha \left(\sigma_2, \tau_2 \right)\right) \\
	&- \lambda_{+-} \cdot \left( \sum_{\left(  \sigma_1, \tau_1 \right) \in \left\lbrace (-1, +1), (+1, -1) \right\rbrace } \left[ \sum_{\left( \sigma_2, \tau_2 \right) \in  \{\pm 1\}^2} \mu \left( \sigma_1, \tau_1, \sigma_2, \tau_2 \right)\right]  - \rho_\alpha \left(  \sigma_1, \tau_1 \right)\right) \\
	&- \lambda_{+-} \cdot \left( \sum_{\left(  \sigma_2, \tau_2 \right) \in \left\lbrace (-1, +1), (+1, -1) \right\rbrace } \left[ \sum_{\left( \sigma_1, \tau_1 \right) \in  \{\pm 1\}^2} \mu \left( \sigma_1, \tau_1, \sigma_2, \tau_2 \right)\right]  - \rho_\alpha \left( \sigma_2, \tau_2 \right)\right)
\end{align*}
Keeping the symmetry in mind, it suffices to consider the following derivatives of the Lagrangian function
\begin{align*}
	\frac{\partial \mathcal{L} \left( \mu, \lambda_{++}, \lambda_{+-}  \right)}{\partial \mu_{++++}} &= 1+ \log \left( \frac{\mu_{++++}}{\rho_{++}^2}\right) + 2 \beta - 2 \lambda_{++}  \\
	\frac{\partial \mathcal{L} \left( \mu, \lambda_{++}, \lambda_{+-}  \right)}{\partial \mu_{++--}} &=  1+ \log \left( \frac{\mu_{++--}}{\rho_{++}^2}\right) - 2 \lambda_{++}\\
	\frac{\partial \mathcal{L} \left( \mu, \lambda_{++}, \lambda_{+-}  \right)}{\partial \mu_{+-+-}} &= 1+ \log \left( \frac{\mu_{+-+-}}{\rho_{+-}^2}\right) + 2 \beta - 2 \lambda_{+-}\\
	\frac{\partial \mathcal{L} \left( \mu, \lambda_{++}, \lambda_{+-}  \right)}{\partial \mu_{+--+}} &= 1+ \log \left( \frac{\mu_{+--+}}{\rho_{+-}^2}\right) - 2 \lambda_{+-}\\
	\frac{\partial \mathcal{L} \left( \mu, \lambda_{++}, \lambda_{+-}  \right)}{\partial \mu_{+++-}}&= 1+ \log \left( \frac{\mu_{+++-}}{\rho_{++}\cdot \rho_{+-}}\right) + \beta - \lambda_{++} - \lambda_{+-}
\end{align*}
Setting these derivatives equal to zero, we instantly obtain
\begin{align}
	\mu_{++++}^* &= \rho_{++}^2 \exp \left(  2 \lambda_{++} - 2 \beta -1 \right) = \rho_{++}^2 x_1^2 e^{-2\beta} \label{mu_first_condition_sec_mom}\\
	\mu_{++--}^* &= \rho_{++}^2 \exp \left(  2 \lambda_{++} -1 \right) = \rho_{++}^2 x_1^2 \\
	\mu_{+-+-}^* &= \rho_{+-}^2 \exp \left(  2 \lambda_{+-} - 2 \beta -1 \right)= \rho_{+-}^2 x_2^2 e^{-2\beta} \\
	\mu_{+--+}^*& = \rho_{+-}^2 \exp \left(  2 \lambda_{+-} -1 \right)= \rho_{+-}^2 x_2^2  \\
	\mu_{+++-}^*& = \rho_{++} \rho_{+-}  \exp \left(  \lambda_{++} + \lambda_{+-} - \beta -1 \right)= \rho_{++}\rho_{+-} x_1 x_2  e^{-\beta}\label{mu_fifth_condition_sec_mom}
\end{align}
where the second equalities in each line represent a notational simplification by introducing $x_1 := e^{\lambda_{++}-\frac{1}{2}}$ and $x_2 := e^{\lambda_{+-}-\frac{1}{2}}$. 
\begin{lemma}\label{lemma_sol_min_sec_moment}
    The above system of equations (\ref{mu_first_condition_sec_mom})-(\ref{mu_fifth_condition_sec_mom}) and therefore the minimization problem of $g \left( \mu, \rho_\alpha \right)$ has the unique solution
    \begin{align*}
    x_1 =  2 \sqrt{ \frac{  \left( 1+ e^{-2 \beta} \right)^2 + \alpha \left( 1- e^{-2 \beta} \right)^2  - 2  e^{- \beta} z }{ \left( 1+ \alpha \right)^2 \left( 1+ e^{-2 \beta}\right) \left( 1- e^{-2 \beta}\right)^2 }}.    
    \end{align*}   
    and
    \begin{align*}
    x_2 =  2 \sqrt{ \frac{  \left( 1+ e^{-2 \beta} \right)^2 - \alpha \left( 1- e^{-2 \beta} \right)^2  \pm 2  e^{- \beta} z }{ \left( 1- \alpha \right)^2 \left( 1+ e^{-2 \beta}\right) \left( 1- e^{-2 \beta}\right)^2 }}.
    \end{align*}
\end{lemma}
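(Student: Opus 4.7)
\medskip

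\noindent\emph{Proof proposal.} The plan is standard constrained convex optimisation. The function $g(\mu,\rho_\alpha) = D_{\mathrm{KL}}(\mu \| \rho_\alpha \otimes \rho_\alpha) + \beta \sum_{\sigma \in A_1} \mu(\sigma) + 2\beta \sum_{\sigma \in A_2} \mu(\sigma)$ is strictly convex in $\mu$ (the KL-divergence is strictly convex; the rest is linear), and the feasible region cut out by the marginal constraints is closed and convex. Hence the minimiser is unique, and it is precisely the $\mu$ satisfying the first-order conditions \eqref{mu_first_condition_sec_mom}--\eqref{mu_fifth_condition_sec_mom} together with the marginal constraints. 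So uniqueness is free; the content of the lemma is to solve for $x_1,x_2$ explicitly.

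Having already reduced to the two unknowns $x_1,x_2$ via symmetry, I would substitute \eqref{mu_first_condition_sec_mom}--\eqref{mu_fifth_condition_sec_mom} into the two non-redundant marginal equations (the ones for $\rho_{++}$ and $\rho_{+-}$, using $\rho_{++}=\rho_{--}=(1+\alpha)/4$ and $\rho_{+-}=\rho_{-+}=(1-\alpha)/4$, together with $\mu_{+++-}^*=\mu_{++-+}^*$ etc.). With $A:=1+e^{-2\beta}$ this yields the $2\times 2$ polynomial system
\begin{align*}
(1+\alpha)\, A\, x_1^2 \;+\; 2(1-\alpha)\, e^{-\beta}\, x_1 x_2 &= 4, \\
(1-\alpha)\, A\, x_2^2 \;+\; 2(1+\alpha)\, e^{-\beta}\, x_1 x_2 &= 4.
\end{align*}

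The key trick is to introduce $p:=e^{-\beta} x_1 x_2$, solve each equation for $x_i^2$ linearly in $p$, and then impose $x_1^2 x_2^2 = p^2 e^{2\beta}$. Expanding collapses everything: the $p$-linear cross term vanishes, and using the identities $A^2 e^{2\beta} = 4 + s^2$ and $(1-e^{-2\beta})^2 = s^2 e^{-2\beta}$ (where $s:=e^\beta - e^{-\beta}$), one arrives at the single quadratic
\begin{align*}
(1-\alpha^2)\, s^2\, p^2 \;+\; 16\, p \;-\; 16 \;=\; 0.
\end{align*}
Positivity of $p$ selects the unique admissible root, $p = \bigl[-8 + 4\sqrt{4 + (1-\alpha^2) s^2}\bigr]/\bigl[(1-\alpha^2)s^2\bigr]$. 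Back-substituting into $x_i^2 = \bigl(4 - 2(1\mp\alpha)p\bigr)/\bigl((1\pm\alpha)A\bigr)$ and simplifying via $A^2 = (1-e^{-2\beta})^2 + 4 e^{-2\beta}$ recovers the closed forms claimed for $x_1,x_2$, with $z = e^{-\beta}\sqrt{4 + (1-\alpha^2) s^2}$ (or an equivalent rewriting).

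The main obstacle is bookkeeping: the algebraic simplification from the raw quadratic-root expression to the symmetric form in the statement is tedious but entirely mechanical once the two identities $A^2 e^{2\beta} - 4 = s^2$ and $c^2 e^{2\beta} = s^2$ (with $c:=1-e^{-2\beta}$) are in hand. A minor side check is that the resulting $x_1,x_2$ yield a genuine probability distribution $\mu^*$ (all components non-negative), which holds because $4 - 2(1\mp\alpha)p > 0$ for $0<\beta<\bks$ and $\alpha$ in the relevant range; this needs a brief monotonicity argument, but follows from $p \to 0$ as $\beta \to 0$ and from the explicit bound $p < 2/(1+|\alpha|)$ implied by the quadratic.
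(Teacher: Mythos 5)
Your proposal is correct, and it reaches the same quadratic-elimination endpoint as the paper, but via a genuinely neater route. The paper substitutes the stationarity conditions \eqref{mu_first_condition_sec_mom}--\eqref{mu_fifth_condition_sec_mom} into the two marginal constraints exactly as you do, but then eliminates asymmetrically: it solves one constraint for $x_1$ in terms of $x_2$, substitutes into the other, and obtains a quadratic in $x_2^2$ whose roots are handled by the quadratic formula; the wrong root is discarded by an argument (``leaving out the detailed calculation'') that it would force $x_1<0$, and the symmetric closed form for $x_1$ is only recovered afterwards by verifying the auxiliary identity \eqref{simpopt}. Your substitution $p=e^{-\beta}x_1x_2$ keeps the two unknowns on an equal footing: the quadratic $(1-\alpha^2)s^2p^2+16p-16=0$ has roots of opposite sign (their product is negative), so positivity of $p$ selects the admissible root with no side computation, and both $x_1^2$ and $x_2^2$ then drop out of the linear back-substitution $x_i^2=\bigl(4-2(1\mp\alpha)p\bigr)/\bigl((1\pm\alpha)A\bigr)$ simultaneously --- I checked that with $z=e^{-\beta}\sqrt{4+(1-\alpha^2)s^2}$ these reproduce the stated expressions exactly, so the identity \eqref{simpopt} is not needed as a separate verification. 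You also make explicit the positivity check $p<2/(1+|\alpha|)$ guaranteeing that $\mu^*$ is a genuine probability vector, which the paper leaves implicit. Your uniqueness argument via strict convexity of the KL-divergence matches the remark the paper makes just after the lemma. In short: same skeleton, cleaner algebra, and slightly more complete on the sign and positivity bookkeeping.
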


\begin{proof}
In order to solve this system of equations, we recall two of the initial constraints of our minimization problem, i.e.
\begin{align*}
	\mu_{++++} + \mu_{+++-} + \mu_{++-+}+ \mu_{++--} &= \rho_{++} \\
	\mu_{+-++} + \mu_{+---} + \mu_{+--+} + \mu_{+-+-} &= \rho_{+-}.
\end{align*}
Plugging in the  $\mu^*$ we derived above and once again keeping in mind the symmetry of the problem, the two constraints can be reformulated into
\begin{align*}
	\rho_{++} x_1^2 e^{-2\beta} + 2\rho_{+-} x_1 x_2  e^{-\beta} + \rho_{++} x_1^2 &= 1 \\
	\rho_{+-} x_2^2 e^{-2\beta} + 2\rho_{++} x_1 x_2  e^{-\beta} + \rho_{+-} x_2^2 &= 1.
\end{align*}
which in turn yields 
\[
x_1 = \frac{1- x_2^2 \rho_{+-} \left( 1+ e^{-2 \beta}\right) }{2 \rho_{++} e^{- \beta} x_2}.
\]
Substituting $x_1$ into the first constraint, we arrive at
\begin{align*}
	\frac{\left(1 + e^{- 2 \beta}\right) \left( 1- x_2^2 \rho_{+-} \left(1 + e^{- 2 \beta}\right) \right)^2 }{4 \rho_{++} e^{- 2 \beta} x_2^2} + \frac{\rho_{+-}}{\rho_{++}} \cdot \left(  1- x_2^2 \rho_{+-} \left(1 + e^{- 2 \beta}\right) \right) =1 
\end{align*}
For notational convenience, we substitute $x:= x_2^2$ and $\kappa := 1 + e^{- 2 \beta}$. As a consequence, the previous equation can be expressed as
\begin{align*}
	x^2 \left( \rho_{+-}^2  \kappa^3 - 4 \rho_{+-}^2 \kappa e^{-2 \beta}\right)  + x \left( 4 \rho_{+-} e^{-2 \beta} - 2 \rho_{+-} \kappa^2 - 4 \rho_{++} e^{-2 \beta} \right) + \kappa = 0. 
\end{align*}
Now, we are able to apply the quadratic formula which yields
\begin{align*}
	x &=  \frac{- 4 \rho_{+-} e^{-2 \beta} + 2 \rho_{+-} \kappa^2 + 4 \rho_{++} e^{-2 \beta}  \pm \sqrt{\left( 4 \rho_{+-} e^{-2 \beta} - 2 \rho_{+-} \kappa^2 - 4 \rho_{++} e^{-2 \beta} \right)^2 - 4 \rho_{+-}^2 \kappa^2 \left( \kappa^2 - 4 e^{-2 \beta} \right) }}{2 \left( \rho_{+-}^2  \kappa^3 - 4 \rho_{+-}^2 \kappa e^{-2 \beta}\right) }\\
	&= \frac{\frac{1}{2} \left( 1- \alpha \right)  \left( 1+ e^{-2 \beta} \right)^2 + \left( 1+ \alpha \right) e^{-2 \beta} - \left( 1- \alpha\right) e^{-2 \beta} \pm \sqrt{\left( 2 \alpha e^{-2 \beta} + \frac{1}{2} \left( 1 - \alpha \right) \kappa^2 \right)^2 -  \frac{1}{4} \left( 1 - \alpha \right)^2 \kappa^4 + \left( 1 - \alpha \right)^2 \kappa^2 e^{-2 \beta}} }{\frac{1}{8} \left( 1- \alpha \right)^2 \left( 1+ e^{-2 \beta}\right) \left( \left( 1+ e^{-2 \beta}\right)^2 - 4 e^{-2 \beta}\right) }\\
	&= 4 \frac{  \left( 1+ e^{-2 \beta} \right)^2 - \alpha \left( 1- e^{-2 \beta} \right)^2  \pm 2 \sqrt{ 4 \alpha^2 e^{-4 \beta} + 2 \alpha e^{-2 \beta} \left(  1- \alpha \right)  \left( 1+ e^{-2 \beta}\right)^2  + \left( 1 - \alpha \right)^2 \left( 1+ e^{-2 \beta}\right)^2 e^{-2 \beta}} }{ \left( 1- \alpha \right)^2 \left( 1+ e^{-2 \beta}\right) \left( 1- e^{-2 \beta}\right)^2  }\\
\end{align*}
Focusing on the square root term, we note
\begin{align*}
	&4 \alpha^2 e^{-4 \beta} + 2 \alpha e^{-2 \beta} \left(  1- \alpha \right)  \left( 1+ e^{-2 \beta}\right)^2  + \left( 1 - \alpha \right)^2 \left( 1+ e^{-2 \beta}\right)^2 e^{-2 \beta} \\
	&= 4 \alpha^2 e^{-4 \beta}  -  \alpha^2 e^{-2 \beta}  \left( 1+ e^{-2 \beta}\right)^2 + \left( 1+ e^{-2 \beta}\right)^2 e^{-2 \beta}  = e^{-2 \beta} \left( \left( 1+ e^{-2 \beta}\right)^2 - \alpha^2  \left( 1- e^{-2 \beta}\right)^2  \right) 
\end{align*}
which leads us to
\begin{align*}
x = 4 \frac{  \left( 1+ e^{-2 \beta} \right)^2 - \alpha \left( 1- e^{-2 \beta} \right)^2  \pm 2  e^{- \beta} \sqrt{\left( 1+ e^{-2 \beta}\right)^2 - \alpha^2  \left( 1- e^{-2 \beta}\right)^2} }{ \left( 1- \alpha \right)^2 \left( 1+ e^{-2 \beta}\right) \left( 1- e^{-2 \beta}\right)^2 }
= 4 \frac{  \left( 1+ e^{-2 \beta} \right)^2 - \alpha \left( 1- e^{-2 \beta} \right)^2  \pm 2  e^{- \beta} z }{ \left( 1- \alpha \right)^2 \left( 1+ e^{-2 \beta}\right) \left( 1- e^{-2 \beta}\right)^2 }
\end{align*}
where we have introduced $z := \sqrt{\left( 1+ e^{-2 \beta}\right)^2 - \alpha^2  \left( 1- e^{-2 \beta}\right)^2}$ for notational convenience. At this point, we recall that $x =x_2^2$ to arrive at
\begin{align*}
x_2 =  2 \sqrt{ \frac{  \left( 1+ e^{-2 \beta} \right)^2 - \alpha \left( 1- e^{-2 \beta} \right)^2  \pm 2  e^{- \beta} z }{ \left( 1- \alpha \right)^2 \left( 1+ e^{-2 \beta}\right) \left( 1- e^{-2 \beta}\right)^2 }} 
\end{align*}
where we discard the negative square root since $x_2$ by definition is of the form $x_2 = e^{\lambda_{+-}-\frac{1}{2}}$ and thereby always non-negative. This leaves us with two potential solutions for $x_2$ which only differ in the $\pm$ sign in the above equation. Leaving out the detailed calculation, it is easy to show that choosing $+$ at the $\pm$ sign would result in a negative $x_1$. However, similar to $x_2$, $x_1  = e^{\lambda_{++}-\frac{1}{2}}$ also cannot become negative by construction. As a consequence, the only remaining and suitable candidate for $x_2$ and thereby the solution is
\begin{align*}
x_2 =  2 \sqrt{ \frac{  \left( 1+ e^{-2 \beta} \right)^2 - \alpha \left( 1- e^{-2 \beta} \right)^2  - 2  e^{- \beta} z }{ \left( 1- \alpha \right)^2 \left( 1+ e^{-2 \beta}\right) \left( 1- e^{-2 \beta}\right)^2 }}.    
\end{align*}
With this solution for $x_2$ we are now able to calculate  the optimal $x_1$. More specifically, we recall the formula we have derived a few steps back
\begin{align*}
x_1 = \frac{1- x_2^2 \rho_{+-} \left( 1+ e^{-2 \beta}\right) }{2 \rho_{++} e^{- \beta} x_2}.    
\end{align*}
Plugging in the optimal $x_2$, we arrive at the expression
\begin{align} \label{eq_lagrange_x1}
	x_1 = \frac{1- \frac{  \left( 1+ e^{-2 \beta} \right)^2 - \alpha \left( 1- e^{-2 \beta} \right)^2  - 2  e^{- \beta} z }{ \left( 1- \alpha \right)  \left( 1- e^{-2 \beta}\right)^2 } }{\left(  1 + \alpha \right)  e^{- \beta} \sqrt{ \frac{  \left( 1+ e^{-2 \beta} \right)^2 - \alpha \left( 1- e^{-2 \beta} \right)^2  - 2  e^{- \beta} z }{ \left( 1- \alpha \right)^2 \left( 1+ e^{-2 \beta}\right) \left( 1- e^{-2 \beta}\right)^2 }}}
	= \frac{\left( -4 e^{- \beta} + 2  z\right) \sqrt{1 + e^{- 2 \beta}} }{\left(  1 + \alpha \right)  \left( 1- e^{-2 \beta}\right)  \sqrt{   \left( 1+ e^{-2 \beta} \right)^2 - \alpha \left( 1- e^{-2 \beta} \right)^2  - 2  e^{- \beta} z }}.
\end{align}
Next, we claim that
\begin{align*}
x_1 =  2 \sqrt{ \frac{  \left( 1+ e^{-2 \beta} \right)^2 + \alpha \left( 1- e^{-2 \beta} \right)^2  - 2  e^{- \beta} z }{ \left( 1+ \alpha \right)^2 \left( 1+ e^{-2 \beta}\right) \left( 1- e^{-2 \beta}\right)^2 }}.    
\end{align*}
Indeed, we find starting at \eqref{eq_lagrange_x1} that
\begin{align*}
	x_1 = \frac{\left( -4 e^{- \beta} + 2  z\right) \sqrt{1 + e^{- 2 \beta}} }{\left(  1 + \alpha \right)  \left( 1- e^{-2 \beta}\right)  \sqrt{   \left( 1+ e^{-2 \beta} \right)^2 - \alpha \left( 1- e^{-2 \beta} \right)^2  - 2  e^{- \beta} z }}.
\end{align*}
Thus, our claim is equivalent to
\begin{align}\label{simpopt}
	\left( z - 2 e^{- \beta}\right)^2  \left( 1 + e^{-2 \beta} \right)^2 
	= \left( \left( 1+ e^{-2 \beta} \right)^2 + \alpha \left( 1- e^{-2 \beta} \right)^2  - 2  e^{- \beta} z\right) \left( \left( 1+ e^{-2 \beta} \right)^2 - \alpha \left( 1- e^{-2 \beta} \right)^2  - 2  e^{- \beta} z\right).
\end{align}
To see that \eqref{simpopt} is indeed true, we execute the following auxiliary calculation:
\begin{align*}
	&\left( \left( 1+ e^{-2 \beta} \right)^2 + \alpha \left( 1- e^{-2 \beta} \right)^2  - 2  e^{- \beta} z\right) \left( \left( 1+ e^{-2 \beta} \right)^2 - \alpha \left( 1- e^{-2 \beta} \right)^2  - 2  e^{- \beta} z\right) \\
	&=  \left( 1+ e^{-2 \beta} \right)^4 - 4 e^{- \beta} z  \left( 1+ e^{-2 \beta} \right)^2 + 4 e^{-2 \beta} \left(  \left( 1+ e^{-2 \beta} \right)^2  - \alpha^2  \left( 1- e^{-2 \beta} \right)^2 \right) - \alpha^2  \left( 1- e^{-2 \beta} \right)^4 \\
	&= \left( 1+ e^{-2 \beta} \right)^2  \left(z^2  - 4 e^{- \beta} z + 4 e^{-2 \beta}\right) =  \left( 1+ e^{-2 \beta} \right)^2  \left( z - 2e^{- \beta}\right)^2.
\end{align*}
Hence, we established our claim and thus know
\begin{align*}
x_1 =  2 \sqrt{ \frac{  \left( 1+ e^{-2 \beta} \right)^2 + \alpha \left( 1- e^{-2 \beta} \right)^2  - 2  e^{- \beta} z }{ \left( 1+ \alpha \right)^2 \left( 1+ e^{-2 \beta}\right) \left( 1- e^{-2 \beta}\right)^2 }}.    
\end{align*}
\end{proof}
Let us bring together our findings of this subsection. Due to the well-known fact that the Kullback-Leibler divergence is convex in its input parameters, we immediately see that the function  $ g(\mu,  \rho_\alpha)$ is convex as well. As a consequence, the $\mu^*$ we have just calculated is indeed the minimum. Put differently, we are now able to state
\begin{align*}
	\min_{\mu \in \mathcal{O}' } g(\mu,  \rho_\alpha) = g(\mu^*,  \rho_\alpha).
\end{align*}
Although this statement is satisfactory, we would favor a more explicit expression. This is achieved by the following Lemma.
\begin{lemma}\label{lemma_g_simplified_version}
    We have 
    \begin{align*}
	g(\mu^*,  \rho_\alpha) &= 2 \log\left( 2\right)  - \log \left(  \left( 1 +e^{- 2 \beta}\right)  \left( 1- e^{-2 \beta}\right) ^2\right) - \left(1 + \alpha \right) \log \left( 1 + \alpha \right)  \\
	&- \left(1 - \alpha \right) \log \left( 1 - \alpha \right) + \frac{1 + \alpha}{2} \log\left(  \left( 1+ e^{-2 \beta} \right)^2 + \alpha \left( 1- e^{-2 \beta} \right)^2  - 2  e^{- \beta} z \right) \\
	&+\frac{1 - \alpha}{2} \log\left(  \left( 1+ e^{-2 \beta} \right)^2 - \alpha \left( 1- e^{-2 \beta} \right)^2  - 2  e^{- \beta} z \right).
\end{align*}
\end{lemma}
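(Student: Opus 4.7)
The plan is to exploit the KKT optimality conditions satisfied by $\mu^*$ (which are exactly the Lagrangian stationarity equations (\ref{mu_first_condition_sec_mom})--(\ref{mu_fifth_condition_sec_mom}) that defined $\mu^*$ in the first place) rather than brute-force substitution. Stationarity gives for every $\sigma = (\sigma_1,\tau_1,\sigma_2,\tau_2) \in \{\pm 1\}^4$ the identity
\begin{equation*}
\log\frac{\mu^*(\sigma)}{\rho_\alpha(\sigma_1,\tau_1)\rho_\alpha(\sigma_2,\tau_2)} = \ell(\sigma_1,\tau_1) + \ell(\sigma_2,\tau_2) - 1 - \beta\,\mathbf{1}\{\sigma \in A_1\} - 2\beta\,\mathbf{1}\{\sigma \in A_2\},
\end{equation*}
where $\ell(\cdot,\cdot) := \lambda_{++}$ on $\{(+,+),(-,-)\}$ and $\ell(\cdot,\cdot) := \lambda_{+-}$ on $\{(+,-),(-,+)\}$.

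Multiplying this identity by $\mu^*(\sigma)$ and summing over $\sigma$, the $A_1$ and $A_2$ contributions on the right cancel with the linear terms inside $g$, and the terms $\mu^*(\sigma)\,\ell(\sigma_i,\tau_i)$ collapse via the marginal constraints $\sum_{(\sigma_2,\tau_2)}\mu^*(\sigma_1,\tau_1,\sigma_2,\tau_2) = \rho_\alpha(\sigma_1,\tau_1)$. Using the symmetry $\rho_{++}=\rho_{--}=(1+\alpha)/4$ and $\rho_{+-}=\rho_{-+}=(1-\alpha)/4$ from (\ref{eq_rho_alpha}), this collapses to
\begin{equation*}
g(\mu^*,\rho_\alpha) \;=\; 2\!\!\sum_{(\sigma_1,\tau_1)}\rho_\alpha(\sigma_1,\tau_1)\,\ell(\sigma_1,\tau_1) \;-\; 1 \;=\; (1+\alpha)\lambda_{++} + (1-\alpha)\lambda_{+-} - 1.
\end{equation*}
Recalling the substitutions $x_1 = e^{\lambda_{++}-1/2}$, $x_2 = e^{\lambda_{+-}-1/2}$, this simplifies to the remarkably clean expression
\begin{equation*}
g(\mu^*,\rho_\alpha) \;=\; (1+\alpha)\log x_1 + (1-\alpha)\log x_2.
\end{equation*}

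It then remains to plug in the explicit formulas for $x_1$ and $x_2$ from \Lem~\ref{lemma_sol_min_sec_moment}. Each $\log x_i$ splits as $\log 2 + \tfrac{1}{2}\log(\text{numerator}_i) - \log(1\pm\alpha) - \tfrac{1}{2}\log\bigl((1+e^{-2\beta})(1-e^{-2\beta})^2\bigr)$, and summing these pieces weighted by $(1\pm\alpha)$ produces precisely the six terms appearing in the claimed formula: the prefactor $2\log 2$ combines from the $\log 2$ contributions, the $-\log\bigl((1+e^{-2\beta})(1-e^{-2\beta})^2\bigr)$ comes from the common denominator piece, the two entropy-like terms $-(1\pm\alpha)\log(1\pm\alpha)$ arise from the $(1\pm\alpha)^2$ factors, and the final two logarithms inherit the $(1\pm\alpha)/2$ weights.

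I do not expect any conceptual obstacle; the only minor bookkeeping task is confirming that the Lagrangian-sum contribution indeed reduces cleanly after using the marginal constraints and the symmetry $\rho_{++}=\rho_{--}$, $\rho_{+-}=\rho_{-+}$ (forced by the parametrization~(\ref{eq_rho_alpha})), so that only the two distinct multipliers $\lambda_{++}$ and $\lambda_{+-}$ survive. The KKT shortcut replaces what would otherwise be a tedious direct substitution of the nine-variable $\mu^*$ into $D_{\mathrm{KL}}$ and a long cancellation exercise.
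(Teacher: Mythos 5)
Your proposal is correct. The identity you derive, $g(\mu^*,\rho_\alpha)=(1+\alpha)\log x_1+(1-\alpha)\log x_2$, is exactly the expression the paper reaches (in the paper's notation the coefficient of $\log x_1$ is $2T_2$ with $T_2=\frac{1+\alpha}{2}$), and the final plug-in of the formulas for $x_1,x_2$ from \Lem~\ref{lemma_sol_min_sec_moment} is the same in both arguments. Where you differ is the middle step: the paper substitutes the explicit forms $\mu^*_{++++}=\rho_{++}^2x_1^2e^{-2\beta}$, etc.\ into $D_{\mathrm{KL}}$, observes that the $\beta$-terms cancel against the linear part of $g$, and then evaluates the coefficient $T_2=2\mu^*_{++++}+2\mu^*_{++--}+4\mu^*_{+++-}$ by a half-page computation that reuses the algebraic identity \eqref{simpopt}; you instead multiply the stationarity conditions by $\mu^*(\sigma)$, sum, and let the marginal constraints $\sum_{(\sigma_2,\tau_2)}\mu^*(\sigma_1,\tau_1,\sigma_2,\tau_2)=\rho_\alpha(\sigma_1,\tau_1)$ collapse the coefficients to $(1\pm\alpha)$ for free. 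This is a genuine simplification: it avoids invoking \eqref{simpopt} a second time and makes the cancellation structural rather than computational. Two small points to make explicit if you write this up: first, the paper's Lagrangian lumps pairs of constraints under the shared multipliers $\lambda_{++},\lambda_{+-}$, so you should note that stationarity still yields your potential-form identity $\log\frac{\mu^*(\sigma)}{\rho_\alpha\rho_\alpha}=\ell(\sigma_1,\tau_1)+\ell(\sigma_2,\tau_2)-1-\beta\vecone\{\sigma\in A_1\}-2\beta\vecone\{\sigma\in A_2\}$ for all sixteen $\sigma$ (it does, by the symmetry of the lumping); second, your summation step uses the individual marginal constraints rather than the lumped ones, which is legitimate because the $\mu^*$ constructed in \Lem~\ref{lemma_sol_min_sec_moment} is obtained precisely by imposing those individual marginals.
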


\begin{proof}
In order to get to the desired expression, we take a closer look at the Kullback-Leibler divergence for the optimal $\mu^*$
\begin{align*}
	D_\textrm{KL} (\mu^* \vert\vert \rho_\alpha \otimes \rho_\alpha) 
	&= 2 \mu_{++++}^* \log \left( x_1^2 e^{-2 \beta}\right) +2 \mu_{++--}^* \log \left( x_1^2 \right) +2 \mu_{+-+-}^* \log \left( x_2^2 e^{-2 \beta}\right) \\
	&\qquad +2 \mu_{+--+}^* \log \left( x_1^2 \right) +8 \mu_{+++-}^* \log \left( x_1 x_2  e^{- \beta}\right) \\
	&= - \beta \left(4 \mu_{++++}^* + 4 \mu_{+-+-}^* + 8 \mu_{+++-}^*  \right) + \log \left( x_1\right) \left( 4 \mu_{++++}^* + 4 \mu_{++--}^* + 8 \mu_{+++-}^*\right) \\
	&\qquad + \log \left( x_2\right) \left( 4 \mu_{+-+-}^* + 4 \mu_{+--+}^* + 8 \mu_{+++-}^*\right) 
\end{align*}
Using the reformulation of $D_\textrm{KL} (\mu^* \vert\vert \rho_\alpha \otimes \rho_\alpha)$, $g(\mu^*,  \rho_\alpha)$ can be formulated as
\begin{align*}
	g(\mu^*,  \rho_\alpha)
	=  \log \left( x_1\right) \left( 4 \mu_{++++}^* + 4 \mu_{++--}^* + 8 \mu_{+++-}^*\right) + \log \left( x_2\right) \left( 4 \mu_{+-+-}^* + 4 \mu_{+--+}^* + 8 \mu_{+++-}^*\right).
\end{align*}
This expression in turn is suitable for inserting $x_1$ and $x_2$ leading to
\begin{align*}
	g(\mu^*,  \rho_\alpha) &= 2 \log\left( 2\right)  - \log \left(  \left( 1 +e^{- 2 \beta}\right)  \left( 1- e^{-2 \beta}\right) ^2\right)\\
	&\qquad + \left[ \log\left(  \left( 1+ e^{-2 \beta} \right)^2 + \alpha \left( 1- e^{-2 \beta} \right)^2  - 2  e^{- \beta} z \right) - 2 \log \left(1+ \alpha \right)  \right] \cdot \left( 2 \mu_{++++}^* + 2 \mu_{++--}^* + 4 \mu_{+++-}^*\right) \\
	&\qquad + \left[ \log\left(  \left( 1+ e^{-2 \beta} \right)^2 - \alpha \left( 1- e^{-2 \beta} \right)^2  - 2  e^{- \beta} z \right) - 2 \log \left(1- \alpha \right)  \right] \cdot \left( 2 \mu_{+-+-}^* + 2 \mu_{+--+}^* + 4 \mu_{+++-}^*\right).
\end{align*}
Since $\mu^*$ is a probability measure by definition, we can exploit the identity
\begin{align*}
	2 \mu_{+-+-}^* + 2 \mu_{+--+}^* + 4 \mu_{+++-}^* = 1 - 2 \mu_{++++}^* - 2 \mu_{++--}^* - 4 \mu_{+++-}^*
\end{align*}
to rearrange $g(\mu^*,  \rho_\alpha)$ as
\begin{align*}
	g(\mu^*,  \rho_\alpha) &= 2 \log\left( 2\right)  - \log \left(  \left( 1 +e^{- 2 \beta}\right)  \left( 1- e^{-2 \beta}\right) ^2\right)
	+  \log\left(  \left( 1+ e^{-2 \beta} \right)^2 - \alpha \left( 1- e^{-2 \beta} \right)^2  - 2  e^{- \beta} z \right) - 2 \log \left(1- \alpha \right) \\
	&\qquad + \left[ \log\left( \frac{\left( 1+ e^{-2 \beta} \right)^2 + \alpha \left( 1- e^{-2 \beta} \right)^2  - 2  e^{- \beta} z}{\left( 1+ e^{-2 \beta} \right)^2 - \alpha \left( 1- e^{-2 \beta} \right)^2  - 2  e^{- \beta} z} \right) - 2 \log \left(\frac{1 + \alpha}{1- \alpha} \right)  \right] \cdot \left( 2 \mu_{++++}^* + 2 \mu_{++--}^* + 4 \mu_{+++-}^*\right).
\end{align*}
To keep the terms relatively brief, we define
\begin{align*}
	T_1 &:=   2 \log\left( 2\right)  - \log \left(  \left( 1 +e^{- 2 \beta}\right)  \left( 1- e^{-2 \beta}\right) ^2\right)
	+  \log\left(  \left( 1+ e^{-2 \beta} \right)^2 - \alpha \left( 1- e^{-2 \beta} \right)^2 - 2  e^{- \beta} z \right)
	- 2 \log \left(1- \alpha \right) \\
	T_2 &:= 2 \mu_{++++}^* + 2 \mu_{++--}^* + 4 \mu_{+++-}^* \\
	T_3 &:= \log\left( \frac{\left( 1+ e^{-2 \beta} \right)^2 + \alpha \left( 1- e^{-2 \beta} \right)^2  - 2  e^{- \beta} z}{\left( 1+ e^{-2 \beta} \right)^2 - \alpha \left( 1- e^{-2 \beta} \right)^2  - 2  e^{- \beta} z} \right) - 2 \log \left(\frac{1 + \alpha}{1- \alpha} \right) 
\end{align*}
which implies $g(\mu^*,  \rho_\alpha) = T_1 + T_2 \cdot T_3$. In the next step, we will plug in $\mu^*$ in order to simplify $T_2$
\begin{align*}
	T_2 &= 2 \mu_{++++}^* + 2 \mu_{++--}^* + 4 \mu_{+++-}^*  = 2 x_1^2 \rho_{++}^2 \left(1 + e^{-2 \beta} + 2 e^{- \beta} \frac{\rho_{+-} x_2}{\rho_{++} x_1} \right) \\
	&= \frac{  \left( 1+ e^{-2 \beta} \right)^2 + \alpha \left( 1- e^{-2 \beta} \right)^2  - 2  e^{- \beta} z }{ 2 \left( 1- e^{-2 \beta}\right)^2 } \\
	& \qquad + e^{- \beta} \frac{\sqrt{\left( \left( 1+ e^{-2 \beta} \right)^2 + \alpha \left( 1- e^{-2 \beta} \right)^2  - 2  e^{- \beta} z  \right) \left( \left( 1+ e^{-2 \beta} \right)^2 - \alpha \left( 1- e^{-2 \beta} \right)^2  - 2  e^{- \beta} z\right) }}{\left( 1+ e^{-2 \beta}\right) \left( 1- e^{-2 \beta}\right)^2}
\end{align*}
Applying \eqref{simpopt} to the term in the square root yields
\begin{align*}
	T_2 &=  \frac{  \left( 1+ e^{-2 \beta} \right)^2 + \alpha \left( 1- e^{-2 \beta} \right)^2  - 2  e^{- \beta} z }{ 2 \left( 1- e^{-2 \beta}\right)^2 } + e^{- \beta} \frac{\left( z - 2 e^{- \beta}\right) \left( 1+ e^{-2 \beta} \right) }{\left( 1+ e^{-2 \beta}\right) \left( 1- e^{-2 \beta}\right)^2}\\
	&= \frac{  \left( 1+ e^{-2 \beta} \right)^2 + \alpha \left( 1- e^{-2 \beta} \right)^2  - 4 e^{-2 \beta} }{ 2 \left( 1- e^{-2 \beta}\right)^2 } = \frac{1 + \alpha}{2}.
\end{align*}
Coming back to $g(\mu^*,  \rho_\alpha)$, we obtain the expression that Lemma \ref{lemma_g_simplified_version} promised
\begin{align*}
	g(\mu^*,  \rho_\alpha) &= 2 \log\left( 2\right)  - \log \left(  \left( 1 +e^{- 2 \beta}\right)  \left( 1- e^{-2 \beta}\right) ^2\right) - \left(1 + \alpha \right) \log \left( 1 + \alpha \right) - \left(1 - \alpha \right) \log \left( 1 - \alpha \right) \\
	&\qquad + \frac{1 + \alpha}{2} \log\left(  \left( 1+ e^{-2 \beta} \right)^2 + \alpha \left( 1- e^{-2 \beta} \right)^2  - 2  e^{- \beta} z \right)
	+\frac{1 - \alpha}{2} \log\left(  \left( 1+ e^{-2 \beta} \right)^2 - \alpha \left( 1- e^{-2 \beta} \right)^2  - 2  e^{- \beta} z \right).
\end{align*}
\end{proof}
\subsection{Maximization with respect to $\alpha$}

In this subsection we focus on the function
\begin{align*}
	f_d \left( \alpha, \beta \right) :=  \log (2) + \textrm{H}\left( \frac{1 + \alpha}{2}\right) - \frac{d}{2} g(\mu^*,  \rho_\alpha).
\end{align*}
which results from plugging in the definition of $\rho$ in terms of $\alpha$ from \eqref{eq_rho_alpha}. More specifically, we are interested in solving the optimization
\begin{align*}
	\max_{-1 < \alpha <1}	f_d \left( \alpha, \beta \right).
\end{align*}
which will immediately yield the answer to our initial optimization problem over $\delta \left( \mu, \rho \right)$. Note that we tacitly exploit the results of both Lemma \ref{lemma_sol_min_sec_moment} and Lemma \ref{lemma_g_simplified_version} to be able to state a function $f_d \left( \alpha, \beta \right)$ that only depends on $d, \alpha,$ and $\beta$. As a consequence, we have to prove the following statement.

\begin{lemma}\label{lemma_optimal_alpha}
Assume that $0 < \beta < \bks$. Then we have
\begin{align*}
	\arg \max_{-1 < \alpha <1}	f_d \left( \alpha, \beta \right) = 0.
\end{align*}
\end{lemma}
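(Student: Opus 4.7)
The plan is to exploit a $\mathbb{Z}/2$ symmetry to reduce the problem to a one-sided question, then establish that $\alpha=0$ is a strict local maximum via the second derivative, and finally rule out non-trivial critical points by identifying the stationary equation with the broadcasting recursion on the infinite $d$-regular tree below the Kesten--Stigum threshold.

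\emph{Step 1 (symmetry).} Flipping all second-coordinate spins $\tau_1\mapsto -\tau_1$ sends $\rho_\alpha$ to $\rho_{-\alpha}$, and both $H(\rho)$ and the minimized divergence $g(\mu^*,\rho)$ of Lemma \ref{lemma_g_simplified_version} are invariant under global relabellings of the spin alphabet. Hence $f_d(\alpha,\beta)=f_d(-\alpha,\beta)$, which immediately gives $\partial_\alpha f_d(0,\beta)=0$. It therefore suffices to show that $f_d$ is strictly decreasing on $(0,1)$.

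\emph{Step 2 (local analysis at $0$).} Setting $A=(1+e^{-2\beta})^{2}$, $B=(1-e^{-2\beta})^{2}$, $z=\sqrt{A-\alpha^{2}B}$ and differentiating the closed-form expression of Lemma \ref{lemma_g_simplified_version} twice in $\alpha$, I would use $H''(1/2)\cdot (1/2)^{2}=-1$ together with a direct Taylor expansion of $g(\mu^{*},\rho_\alpha)$ around $\alpha=0$ to obtain
\[
\partial_\alpha^{2}f_d(0,\beta)\;=\;-1-\tfrac{d}{2}\,\partial_\alpha^{2}g(\mu^{*},\rho_\alpha)\big|_{\alpha=0}\;=\;-\,c(\beta,d)\,\bigl(1-(d-1)\delta_{1}^{2}\bigr),
\]
for some positive prefactor $c(\beta,d)>0$, where $\delta_1=(e^{-\beta}-1)/(e^{-\beta}+1)$ as in \eqref{eq_def_delta_lambda}. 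Since $(d-1)\delta_1^{2}<1$ is equivalent to $\beta<\bks$, this shows that $\alpha=0$ is a strict local maximum throughout the replica symmetric phase.

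\emph{Step 3 (globalization).} To exclude stationary points in $(0,1)$, I would write the first-order condition $\partial_\alpha f_d(\alpha,\beta)=0$ in terms of $\alpha$ and $z$, using the identity $z\,\partial_\alpha z=-\alpha B$ to eliminate derivatives of $z$. After the substitution $t=\tanh(\beta/2)$ and $h=\operatorname{arctanh}\alpha$, the resulting equation collapses to the Bethe recursion fixed-point equation for the antiferromagnetic Ising model on the infinite $d$-regular tree. By classical Kesten--Stigum reconstruction theory (as leveraged in \cite{Mossel_2011,Coja_2020}), this recursion is a strict contraction with the unique fixed point $h=0$ precisely when $(d-1)\delta_1^{2}<1$, i.e. when $\beta<\bks$. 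Combining with Steps 1--2 then forces $\alpha=0$ to be the unique maximizer on $(-1,1)$.

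\emph{Main obstacle.} The derivative computations in Step 2 are mechanical but messy, because the terms $(1\pm\alpha)\log(A\pm\alpha B-2e^{-\beta}z)$ in $g(\mu^{*},\rho_\alpha)$ generate a host of contributions through the square root $z$; getting the prefactor in front of $1-(d-1)\delta_1^{2}$ correctly is bookkeeping. The genuinely hard part is Step 3: identifying the cancellation that reduces the stationary equation of $f_d$ to the tree broadcasting recursion requires exactly the algebraic identity behind the Kesten--Stigum threshold, and this is where the spatial-mixing input announced in Section 2 of the paper becomes indispensable.
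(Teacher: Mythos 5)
Your Steps 1 and 2 are sound: $f_d$ is indeed even in $\alpha$, so $\partial_\alpha f_d(0,\beta)=0$, and your claimed form of the second derivative at the origin is correct --- one computes $\partial_\alpha^2 f_d(0,\beta)=\frac{d-2}{2}-\frac{d}{e^\beta+e^{-\beta}}$, which vanishes exactly at $\beta=\bks$ and is negative below it, and this is indeed a positive multiple of $-(1-(d-1)\delta_1^2)$. The genuine gap is Step 3. You assert that the stationarity equation collapses to the Bethe fixed-point recursion on the infinite $d$-regular tree and that Kesten--Stigum theory then rules out non-trivial solutions; neither half is established. The first-order condition here reads
\[
\frac{d-1}{2}\log\frac{1+\alpha}{1-\alpha}\;=\;\frac{d}{4}\,\log\frac{\left(1+e^{-2\beta}\right)^2+\alpha\left(1-e^{-2\beta}\right)^2-2e^{-\beta}z}{\left(1+e^{-2\beta}\right)^2-\alpha\left(1-e^{-2\beta}\right)^2-2e^{-\beta}z},
\qquad z=\sqrt{\left(1+e^{-2\beta}\right)^2-\alpha^2\left(1-e^{-2\beta}\right)^2},
\]
which is not visibly of the tree-recursion form $h=(d-1)\operatorname{arctanh}\left(\tanh(\cdot)\tanh h\right)$ (note the mismatched prefactors $\tfrac{d-1}{2}$ versus $\tfrac{d}{4}$ and the presence of $z$); the reduction you invoke is precisely the missing content, as you yourself acknowledge. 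Moreover, even granting such a reduction, the statement that $(d-1)\delta_1^2<1$ forces the recursion to have \emph{no} non-trivial fixed point is the Ising reconstruction-threshold theorem, not a formal consequence of the Kesten--Stigum linearization (which only controls local stability of the trivial fixed point), and a strict-contraction claim is stronger still. As written, the globalization step is a plan rather than a proof.

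For comparison, the paper's proof avoids the tree recursion entirely and is more elementary: it computes $\partial_\alpha^2 f_d(\alpha,\beta)$ in closed form for all $\alpha\in(-1,1)$, shows that this quantity is strictly increasing in $\beta$ for each fixed $\alpha$, and then verifies $\partial_\alpha^2 f_d(\alpha,\bks)<0$ for all $\alpha\in(-1,1)$ and $d>2$. Hence $f_d(\cdot,\beta)$ is strictly concave on the whole interval for every $\beta<\bks$, and the critical point $\alpha=0$ is automatically the unique global maximizer. If you want to rescue your outline, the cleanest fix is to replace Step 3 by this global concavity argument (or by any direct verification that $\partial_\alpha f_d(\alpha,\beta)<0$ on $(0,1)$), rather than routing through reconstruction on the tree.
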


\begin{proof}
To solve the maximization with respect to $\alpha$, we  calculate the derivatives. Let us start with the simpler ones, namely the first and second derivative of the entropy with respect to $\alpha$:
\begin{align*}
	\frac{\partial \textrm{H}\left( \frac{1 + \alpha}{2}\right) }{\partial \alpha} = \frac{1}{2} \log \left( 1- \alpha \right) -  \frac{1}{2} \log \left( 1+ \alpha \right)
\end{align*}
and
\begin{align*}
	\frac{\partial^2 \textrm{H}\left( \frac{1 + \alpha}{2}\right) }{\partial \alpha^2} = \dfrac{1}{2} \left( \frac{-1}{1- \alpha} - \frac{1}{1 + \alpha} \right) = - \frac{1}{1 - \alpha^2}.
\end{align*}
Before we continue with our main task, let us state a useful observation which will be helpful in the following calculations. Let
\begin{align*}
    z = \sqrt{\left( 1+ e^{-2 \beta}\right)^2 - \alpha^2  \left( 1- e^{-2 \beta}\right)^2}
\end{align*}
Then, we have
\begin{align*}
	\frac{\partial z }{\partial \alpha} &= \frac{- \alpha  \left( 1- e^{-2 \beta} \right)^2}{\sqrt{\left( 1+ e^{-2 \beta}\right)^2 - \alpha^2  \left( 1- e^{-2 \beta}\right)^2}} = - \alpha  \left( 1- e^{-2 \beta} \right)^2 z^{-1}.
\end{align*}
Next, we determine the first two derivatives for $g(\mu^*,  \rho_\alpha)$. Starting with the first derivative, we find
\begin{align*}
	&\frac{\partial g(\mu^*,  \rho_\alpha) }{\partial \alpha} = -\frac{1 + \alpha}{1 + \alpha} - \log \left( \frac{1+ \alpha}{2}\right) + \frac{1 - \alpha}{1 - \alpha} + \log \left( \frac{1- \alpha}{2}\right)\\
	&+ \frac{1}{2}\left[   \log\left(  \left( 1+ e^{-2 \beta} \right)^2 + \alpha \left( 1- e^{-2 \beta} \right)^2  - 2  e^{- \beta} z \right) -  \log\left(  \left( 1+ e^{-2 \beta} \right)^2 - \alpha \left( 1- e^{-2 \beta} \right)^2  - 2  e^{- \beta} z \right)\right] \\
	&+ \frac{1 + \alpha}{2} \cdot \frac{ \left( 1- e^{-2 \beta} \right)^2 + 2 e^{- \beta}\alpha  \left( 1- e^{-2 \beta} \right)^2 z^{-1}}{\left( 1+ e^{-2 \beta} \right)^2 + \alpha \left( 1- e^{-2 \beta} \right)^2  - 2  e^{- \beta} z}
	+ \frac{1 - \alpha}{2} \cdot \frac{ -\left( 1- e^{-2 \beta} \right)^2 + 2 e^{- \beta}\alpha  \left( 1- e^{-2 \beta} \right)^2 z^{-1}}{\left( 1+ e^{-2 \beta} \right)^2 - \alpha \left( 1- e^{-2 \beta} \right)^2  - 2  e^{- \beta} z}
\end{align*}
For the next simplification, we focus on the last two summands of the previously stated derivative, i.e.
\begin{align*}
	& \frac{1 + \alpha}{2} \cdot \frac{ \left( 1- e^{-2 \beta} \right)^2 + 2 e^{- \beta}\alpha  \left( 1- e^{-2 \beta} \right)^2 z^{-1}}{\left( 1+ e^{-2 \beta} \right)^2 + \alpha \left( 1- e^{-2 \beta} \right)^2  - 2  e^{- \beta} z}
	+ \frac{1 - \alpha}{2} \cdot \frac{ -\left( 1- e^{-2 \beta} \right)^2 + 2 e^{- \beta}\alpha  \left( 1- e^{-2 \beta} \right)^2 z^{-1}}{\left( 1+ e^{-2 \beta} \right)^2 - \alpha \left( 1- e^{-2 \beta} \right)^2  - 2  e^{- \beta} z}\\
	&= \frac{\left( 1- e^{-2 \beta} \right)^2}{2 z} \cdot \frac{\left(  1 + \alpha  \right) \left( z + 2 e^{- \beta}\alpha \right) \left(   \left( 1+ e^{-2 \beta} \right)^2 - \alpha \left( 1- e^{-2 \beta} \right)^2  - 2  e^{- \beta} z\right) }{\left(  \left( 1+ e^{-2 \beta} \right)^2 + \alpha \left( 1- e^{-2 \beta} \right)^2  - 2  e^{- \beta} z \right)  \left(   \left( 1+ e^{-2 \beta} \right)^2 - \alpha \left( 1- e^{-2 \beta} \right)^2  - 2  e^{- \beta} z\right) }\\
	&+ \frac{\left( 1- e^{-2 \beta} \right)^2}{2 z} \cdot \frac{\left(  1 - \alpha  \right) \left( -z + 2 e^{- \beta}\alpha \right) \left(   \left( 1+ e^{-2 \beta} \right)^2 + \alpha \left( 1- e^{-2 \beta} \right)^2  - 2  e^{- \beta} z\right) }{\left(  \left( 1+ e^{-2 \beta} \right)^2 + \alpha \left( 1- e^{-2 \beta} \right)^2  - 2  e^{- \beta} z \right)  \left(   \left( 1+ e^{-2 \beta} \right)^2 - \alpha \left( 1- e^{-2 \beta} \right)^2  - 2  e^{- \beta} z\right) }
\end{align*}
Again, we restrict our attention to one term, namely
\begin{align*}
	&\left(  1 + \alpha  \right) \left( z + 2 e^{- \beta}\alpha \right) \left(   \left( 1+ e^{-2 \beta} \right)^2 - \alpha \left( 1- e^{-2 \beta} \right)^2  - 2  e^{- \beta} z\right) \\
	&+ \left(  1 - \alpha  \right) \left( -z + 2 e^{- \beta}\alpha \right) \left(   \left( 1+ e^{-2 \beta} \right)^2 + \alpha \left( 1- e^{-2 \beta} \right)^2  - 2  e^{- \beta} z\right)\\
	&=\left( \left( 1 + e^{- 2 \beta}\right)^2 - 2 e^{- \beta} z  \right) \left( 4 e^{- \beta} \alpha + 2 \alpha z \right) + \alpha \left( 1 - e^{- 2 \beta}\right)^2 \left( -2 z - 4 e^{- \beta} \alpha^2 \right) \\
	&= 2 \alpha z 4 e^{-2 \beta} - 8  e^{-2 \beta} \alpha z= 0.
\end{align*}
As a result, the first derivative can be reduced to
\begin{align*}
	\frac{\partial g(\mu^*,  \rho_\alpha) }{\partial \alpha} &= - \log \left( 1+ \alpha\right) + \log \left( 1- \alpha\right) +
	\frac{1}{2} \log\left( \frac{ \left( 1+ e^{-2 \beta} \right)^2 + \alpha \left( 1- e^{-2 \beta} \right)^2  - 2  e^{- \beta} z} {\left( 1+ e^{-2 \beta} \right)^2 - \alpha \left( 1- e^{-2 \beta} \right)^2  - 2  e^{- \beta} z }\right).
\end{align*}
Based on this result, we can instantly compute the second derivative
\begin{align*}
	\frac{\partial^2 g(\mu^*,  \rho_\alpha) }{\partial \alpha^2}
	 = -\frac{2}{1- \alpha^2} + \frac{1}{2} \cdot \left[ \frac{ \left( 1- e^{-2 \beta} \right)^2 + 2 e^{- \beta}\alpha  \left( 1- e^{-2 \beta} \right)^2 z^{-1}}{\left( 1+ e^{-2 \beta} \right)^2 + \alpha \left( 1- e^{-2 \beta} \right)^2  - 2  e^{- \beta} z} -  \frac{ -\left( 1- e^{-2 \beta} \right)^2 + 2 e^{- \beta}\alpha  \left( 1- e^{-2 \beta} \right)^2 z^{-1}}{\left( 1+ e^{-2 \beta} \right)^2 - \alpha \left( 1- e^{-2 \beta} \right)^2  - 2  e^{- \beta} z} \right].
\end{align*}
Once again, we apply \eqref{simpopt} to get to
\begin{align*}
	\frac{\partial^2 g(\mu^*,  \rho_\alpha) }{\partial \alpha^2}
	&= -\frac{2}{1- \alpha^2} + \left( 1 - e^{-2 \beta}\right)^2 \frac{2 z \left( \left( 1+ e^{-2 \beta} \right)^2  - 2  e^{- \beta} z\right) - 4 e^{- \beta} \alpha^2 \left( 1- e^{-2 \beta} \right)^2} {2z \left( z - 2 e^{- \beta}\right)^2  \left( 1 + e^{-2 \beta} \right)^2}\\
	&= -\frac{2}{1- \alpha^2} + \left( 1 - e^{-2 \beta}\right)^2 \frac{ \left( 1 + e^{-2 \beta} \right)^2 \left(z - 2 e^{- \beta} \right) } {z \left( z - 2 e^{- \beta}\right)^2  \left( 1 + e^{-2 \beta} \right)^2} =  \frac{ \left( 1 - e^{-2 \beta} \right)^2 } {z \left( z - 2 e^{- \beta}\right) } -\frac{2}{1- \alpha^2}.
\end{align*}
Finally, combining the derivatives of the entropy and $g(\mu^*, \rho_a)$ we arrive at
\begin{align*}
	&\frac{\partial f_d \left( \alpha, \beta \right) }{\partial \alpha} = \frac{\partial \textrm{H}\left( \frac{1 + \alpha}{2}\right) }{\partial \alpha} - \frac{d}{2} \cdot \frac{\partial g(\mu^*,  \rho_\alpha) }{\partial \alpha}\\
	&= \frac{1}{2} \log \left( 1- \alpha \right) -  \frac{1}{2} \log \left( 1+ \alpha \right) \\
	&\qquad + \frac{d}{2} \left(  \log \left( 1+ \alpha\right) - \log \left( 1- \alpha\right) - \frac{1}{2} \log\left( \frac{ \left( 1+ e^{-2 \beta} \right)^2 + \alpha \left( 1- e^{-2 \beta} \right)^2  - 2  e^{- \beta} z} {\left( 1+ e^{-2 \beta} \right)^2 - \alpha \left( 1- e^{-2 \beta} \right)^2  - 2  e^{- \beta} z }\right)\right) \\
	&=    \frac{d - 1}{2} \log \left( 1+ \alpha \right) - \frac{d -1}{2} \log \left( 1- \alpha \right) - \frac{d}{4} \log\left( \frac{ \left( 1+ e^{-2 \beta} \right)^2 + \alpha \left( 1- e^{-2 \beta} \right)^2  - 2  e^{- \beta} z} {\left( 1+ e^{-2 \beta} \right)^2 - \alpha \left( 1- e^{-2 \beta} \right)^2  - 2  e^{- \beta} z }\right)
\end{align*}
and
\begin{align*}
	\frac{\partial^2 f_d \left( \alpha, \beta \right) }{\partial \alpha^2} &= \frac{\partial^2 \textrm{H}\left( \frac{1 + \alpha}{2}\right) }{\partial \alpha^2} - \frac{d}{2} \cdot \frac{\partial^2 g(\mu^*,  \rho_\alpha) }{\partial^2 \alpha}
	= - \frac{1}{1 - \alpha^2} - \frac{d}{2} \cdot \left( \frac{ \left( 1 - e^{-2 \beta} \right)^2 } {z \left( z - 2 e^{- \beta}\right) } -\frac{2}{1- \alpha^2} \right)\\
	&= d \cdot \frac{z^2 -4  z e^{-\beta} + \left( 1 + e^{-2 \beta} \right)^2 - \alpha^2 \left( 1 - e^{-2 \beta} \right)^2 - \left( 1 - e^{-2 \beta} \right)^2 + \alpha^2 \left( 1 - e^{-2 \beta} \right)^2}{\left( 1 - \alpha^2 \right) 2  z \left( z - 2 e^{- \beta}\right)} - \frac{1}{1- \alpha^2}\\
	&= \frac{d-2}{2 \left( 1- \alpha^2\right) } - \frac{d e^{- \beta}}{\left( 1- \alpha^2\right) z} =\frac{d-2}{2 \left( 1- \alpha^2\right) } - \frac{d }{\left( 1- \alpha^2\right) \sqrt{\left( e^\beta + e^{- \beta}\right)^2 - \alpha^2  \left( e^\beta - e^{- \beta}\right)^2}}\\
	&= \frac{d-2}{2 \left( 1- \alpha^2\right) } - \frac{d }{\left( 1- \alpha^2\right) \sqrt{\left( 1- \alpha^2\right) \cdot \left( e^{2 \beta} + e^{-2 \beta}\right)+ 2 + 2 \alpha^2 }}.
\end{align*}
Furthermore, we note that for every $\beta $ we have for $\alpha = 0$
\begin{align*}
	\frac{\partial f_d  }{\partial \alpha}\left( 0, \beta \right) = 0.
\end{align*}
Now, to complete the maximization with respect to $\alpha$, we claim is that the global maximum of $f_d \left( \alpha, \beta \right) $ is at $\alpha = 0$ as long as $\beta<\bks$. We prove this claim in two steps. First, we show that $\frac{\partial^2 f_d  }{\partial \alpha^2} \left( \alpha, \beta \right)$ is increasing in $\beta$. Subsequently, we establish that $\frac{\partial^2 f_d  }{\partial \alpha^2} \left( \alpha, \beta^* \right)$ is smaller than zero for all $\alpha \in (-1, 1)$.
As a consequence,  $\frac{\partial^2 f_d  }{\partial \alpha^2} \left( \alpha, \beta \right) < 0$ holds for all $\beta \in \left( 0, \beta^* \right) $ and $\alpha \in (-1, 1)$ and thereby implies that the maximum  of $f_d \left( \alpha, \beta \right) $ is attained at $\alpha = 0$ for $\beta < \beta^*$.
The previously performed technical rearrangements are helpful for calculating the next derivative in a straightforward manner.
\begin{align*}
	\frac{\partial}{\partial \beta} \left( \frac{\partial^2 f_d  }{\partial \alpha^2}\right) \left( \alpha, \beta \right)
	&=  \frac{d}{2 \left( 1- \alpha^2\right)} \frac{ \left( 1- \alpha^2\right) \cdot \left( 2 \beta e^{2 \beta -1} - 2 \beta e^{-2 \beta -1}\right)}{ \left[ \left( 1- \alpha^2\right) \cdot \left( e^{2 \beta} + e^{-2 \beta}\right)+ 2 + 2 \alpha^2 \right]^{\frac{3}{2}}}\\
	&= \underbrace{\frac{d}{2} 2 \beta  e^{2 \beta -1}  \left( 1 -  e^{-4 \beta }\right)}_{>0} \underbrace{\left[ \underbrace{\left( 1- \alpha^2\right)}_{>0} \cdot \left( e^{2 \beta} + e^{-2 \beta}\right)+ 2 + 2 \alpha^2 \right]^{-\frac{3}{2}} }_{>0} > 0
\end{align*}
where we restrict our attention to $-1< \alpha <1$.  All that remains to do is to plug in the Kesten-Stigum bound into the second derivative with respect to alpha which yields
\begin{align*}
	\frac{\partial^2 f_d  }{\partial \alpha^2} \left( \alpha, \beta^* \right) &= \frac{d-2}{2 \left( 1- \alpha^2\right) } - \frac{d}{ 1- \alpha^2} \cdot \left[ \left( 1- \alpha^2\right) \cdot \left( \frac{\left( \sqrt{d-1}+1\right) ^2}{\left( \sqrt{d-1}-1\right) ^2} +  \frac{\left( \sqrt{d-1}-1\right) ^2}{\left( \sqrt{d-1}+1\right) ^2}\right) + 2 + 2 \alpha^2 \right]^{- \frac{1}{2}} \\
	&= \frac{d-2}{2 \left( 1- \alpha^2\right) } - \frac{d}{ 1- \alpha^2} \cdot \left[ \left( 1- \alpha^2\right) \cdot \left( \frac{\left( \sqrt{d-1}+1\right) ^4 + \left( \sqrt{d-1}-1\right) ^4}{\left(d-1 -1\right) ^2} \right) + 2 + 2 \alpha^2 \right]^{- \frac{1}{2}} \\
	&= \frac{d-2}{2 \left( 1- \alpha^2\right) } - \frac{d}{ 1- \alpha^2} \cdot \left[ \frac{ 4 d^2 + \alpha^2 \left( 16 - 16d\right) }{\left(d-2\right) ^2}   \right]^{- \frac{1}{2}}\\
	&= \frac{d-2}{2 \left( 1- \alpha^2\right) } \cdot \underbrace{\left( 1 - \frac{d}{\sqrt{d^2 - 4 \alpha^2 \left( d-1\right)}}\right) }_{<0}< 0
\end{align*}
where we assume both $d > 2$ and $-1< \alpha <1$. This concludes the maximization problem.
\end{proof}

What remains is to bring all the findings of this section together.

\begin{proof}[Proof of Lemma \ref{sec_mom_optimum}]
Substituting $\alpha = 0$ from Lemma \ref{lemma_optimal_alpha} into the previous reformulations, we can state that $\delta \left( \mu, \rho \right)$ obtains its optimum at $\mu^*$ where
\begin{align*}
	\mu_{++++}^* &= \mu_{----}^* = \mu_{+-+-}^* = \mu_{-+-+}^* = \frac{e^{-2 \beta}}{4 \left( 1 + e^{- \beta}\right)^2}\\
	\mu_{+--+}^* &= \mu_{--++}^* = \mu_{-++-}^* = \mu_{++--}^* = \frac{1}{4 \left( 1 + e^{- \beta}\right)^2}\\
	\mu_{+++-}^* &= \mu_{++-+}^* = \mu_{+-++}^* = \mu_{-+++}^* = \mu_{---+}^*= \mu_{--+-}^*= \mu_{-+--}^*= \mu_{+---}^* = \frac{e^{- \beta}}{4 \left( 1 + e^{- \beta}\right)^2}
\end{align*}
which also implies
\begin{align*}
	\rho_{++}^* = \rho_{+-}^* = \rho_{-+}^* = \rho_{--}^* = \frac{1}{4}
\end{align*}
and
\begin{align*}
	&\delta \left( \mu^*, \rho^* \right) =  \textrm{H}\left( \rho^* \right) -\frac{d}{2} \left( D_\textrm{KL} \left( \mu^* \vert\vert \rho^* \otimes \rho^* \right) + \beta   \sum_{\sigma \in A_1}  \mu^* (\sigma) + 2 \beta  \sum_{\sigma \in A_2}  \mu^* (\sigma)\right)\\
	&=  \left( 2 - 2 d\right)  \log \left( 2\right) + d \log \left( 2 \left(1 + e^{- \beta} \right)  \right) + \frac{d}{2} \left( -  \frac{e^{-2 \beta}}{ \left( 1 + e^{- \beta}\right)^2} \log \left(e^{-2 \beta} \right)  - \frac{2 e^{- \beta}}{ \left( 1 + e^{- \beta}\right)^2} \log \left(e^{- \beta} \right) \right)  - d \beta  \frac{e^{-2 \beta} + e^{- \beta}}{ \left( 1 + e^{- \beta}\right)^2}\\
	&=  \left( 2 - d\right)  \log \left( 2\right) + d \log \left(1 + e^{- \beta}  \right) + d \beta \left(  \frac{e^{-2 \beta}}{ \left( 1 + e^{- \beta}\right)^2}   + \frac{ e^{- \beta}}{ \left( 1 + e^{- \beta}\right)^2}  \right) - d \beta  \frac{e^{-2 \beta} + e^{- \beta}}{ \left( 1 + e^{- \beta}\right)^2}\\
	&=  \left( 2 - d\right)  \log \left( 2\right) + d \log \left(1 + e^{- \beta}  \right).
\end{align*}
Lemma \ref{sec_mom_optimum} readily follows.
\end{proof}

\section{The Hessian for the second moment / Proof of \Lem~\ref{hessian_second_moment}} \label{sec_hessian}

The proof of \Lem~\ref{hessian_second_moment} boils down to tedious calculations of the first and second partial derivatives.
As a starting point we reformulate $\delta \left( \mu, \rho \right)$ with the restricted number of variables.
\begin{align*}
	&\delta \left( \mu, \rho \right)   = \textrm{H}\left( \rho\right) -\frac{d}{2} \left( D_\textrm{KL} \left( \mu \vert\vert \rho \otimes \rho\right) + \beta   \sum_{\sigma \in A_1}  \mu (\sigma) + 2 \beta  \sum_{\sigma \in A_2}  \mu(\sigma)\right) \\
	&= \left(  1 - d\right) \textrm{H}\left( \rho\right) + \frac{d}{2} \textrm{H}\left( \mu \right) - d \beta  \left( x_3 + x_4 + x_5 + x_6 +  x_7 + x_8 + x_9 + \mu_{----} \right)
\end{align*}
Now, let us turn to the first  derivatives of  $\textrm{H}\left( \rho\right) $
\begin{align*}
	\frac{\partial  \textrm{H}\left( \rho\right)}{\partial x_1} &=    - \log \left( \rho_{+-} \right)  - 1  - \log \left( \rho_{-+} \right)  - 1 + 2 \log \left( \rho_{--} \right) + 2 \\
	\frac{\partial  \textrm{H}\left( \rho\right)}{\partial x_2} &= - \log \left( \rho_{++} \right)  - 1   +   \log \left( \rho_{--} \right) + 1 \\
	\frac{\partial  \textrm{H}\left( \rho\right)}{\partial x_3} &= - \log \left( \rho_{++} \right)  - 1   - \log \left( \rho_{+-} \right)  - 1   + 2 \log \left( \rho_{--} \right) + 2 \\
	\frac{\partial  \textrm{H}\left( \rho\right)}{\partial x_4} &= - \log \left( \rho_{++} \right)  - 1     - \log \left( \rho_{-+} \right)  - 1 + 2 \log \left( \rho_{--} \right) + 2 \\
	\frac{\partial  \textrm{H}\left( \rho\right)}{\partial x_5} &=  - \log \left( \rho_{+-} \right)  - 1  +  \log \left( \rho_{--} \right) + 1 \\
	\frac{\partial  \textrm{H}\left( \rho\right)}{\partial x_6} &=   - \log \left( \rho_{-+} \right)  - 1 +  \log \left( \rho_{--} \right) + 1 \\
	\frac{\partial  \textrm{H}\left( \rho\right)}{\partial x_7} &=    - \log \left( \rho_{+-} \right)  - 1   + \log \left( \rho_{--} \right) + 1 \\
	\frac{\partial  \textrm{H}\left( \rho\right)}{\partial x_8} &=   - \log \left( \rho_{-+} \right)  - 1 + \log \left( \rho_{--} \right) + 1 \\
	\frac{\partial  \textrm{H}\left( \rho\right)}{\partial x_9} &= - \log \left( \rho_{++} \right)  - 1    +  \log \left( \rho_{--} \right) + 1 \\
\end{align*}
and the first derivatives of $\textrm{H}\left( \mu \right) $
\begin{align*}
	\frac{\partial  \textrm{H}\left( \mu\right)}{\partial x_1} &=  - 2 \log \left( x_1 \right) - 2 + 2 \log \left( \mu_{----} \right)  + 2  \\
	\frac{\partial  \textrm{H}\left( \mu\right)}{\partial x_2} &=  - 2 \log \left( x_2 \right) - 2 + 2 \log \left( \mu_{----} \right)  + 2  \\
	\frac{\partial  \textrm{H}\left( \mu\right)}{\partial x_3} &= -  2 \log \left( x_3 \right) - 2  + 2 \log \left( \mu_{----} \right)  + 2 \\
	\frac{\partial  \textrm{H}\left( \mu\right)}{\partial x_4} &= -  2 \log \left(  x_4\right) - 2 + 2 \log \left( \mu_{----} \right)  + 2\\
\end{align*}	
\begin{align*}
	\frac{\partial  \textrm{H}\left( \mu\right)}{\partial x_5} &=   -  2 \log \left( x_5 \right) - 2 + 2 \log \left( \mu_{----} \right)  + 2\\
	\frac{\partial  \textrm{H}\left( \mu\right)}{\partial x_6} &=    -  2 \log \left( x_6 \right) - 2 + 2 \log \left( \mu_{----} \right)  + 2\\
	\frac{\partial  \textrm{H}\left( \mu\right)}{\partial x_7} &=    -   \log \left( x_7 \right) - 1 + \log \left( \mu_{----} \right)  + 1\\
	\frac{\partial  \textrm{H}\left( \mu\right)}{\partial x_8} &=    -   \log \left( x_8 \right) - 1 + \log \left( \mu_{----} \right)  + 1 \\
	\frac{\partial  \textrm{H}\left( \mu\right)}{\partial x_9} &=  -   \log \left( x_9 \right) - 1 + \log \left( \mu_{----} \right)  + 1.
\end{align*}
For the second derivatives we obtain
\begin{align*}
	\frac{\partial^2  \delta \left( \mu, \rho \right)}{\partial x_1^2} &=  \left( 1 - d \right) \left(   - \frac{1}{\rho_{+-}} - \frac{1}{\rho_{-+}} -  \frac{4}{\rho_{--}}\right) + \frac{d}{2} \left( - \frac{2}{x_1} - \frac{4}{\mu_{----}} \right)\\
	\frac{\partial^2  \delta \left( \mu, \rho \right)}{\partial x_2 \partial x_1} &=  \left( 1 - d \right)  \left( - \frac{2}{\rho_{--}}\right) + \frac{d}{2} \left( - \frac{4}{\mu_{----}}\right) \\
	\frac{\partial^2  \delta \left( \mu, \rho \right)}{\partial x_3 \partial x_1} &=  \left( 1 - d \right)  \left( - \frac{1}{\rho_{+-}} - \frac{4}{\rho_{--}}\right) + \frac{d}{2} \left( - \frac{4}{\mu_{----}}\right) \\
	\frac{\partial^2  \delta \left( \mu, \rho \right)}{\partial x_4 \partial x_1} &=  \left( 1 - d \right)  \left(  - \frac{1}{\rho_{-+}} - \frac{4}{\rho_{--}}\right) + \frac{d}{2} \left( - \frac{4}{\mu_{----}}\right) \\
	\frac{\partial^2  \delta \left( \mu, \rho \right)}{\partial x_5 \partial x_1} &=  \left( 1 - d \right)  \left( - \frac{1}{\rho_{+-}} - \frac{2}{\rho_{--}}\right) + \frac{d}{2} \left( - \frac{4}{\mu_{----}}\right) \\
	\frac{\partial^2  \delta \left( \mu, \rho \right)}{\partial x_6 \partial x_1} &=  \left( 1 - d \right)  \left( - \frac{1}{\rho_{-+}} - \frac{2}{\rho_{--}}\right) + \frac{d}{2} \left( - \frac{4}{\mu_{----}}\right) \\
	\frac{\partial^2  \delta \left( \mu, \rho \right)}{\partial x_7 \partial x_1} &=  \left( 1 - d \right)  \left( - \frac{1}{\rho_{+-}} - \frac{2}{\rho_{--}}\right) + \frac{d}{2} \left( - \frac{2}{\mu_{----}}\right) \\
	\frac{\partial^2  \delta \left( \mu, \rho \right)}{\partial x_8 \partial x_1} &=  \left( 1 - d \right)  \left( - \frac{1}{\rho_{-+}} - \frac{2}{\rho_{--}}\right) + \frac{d}{2} \left( - \frac{2}{\mu_{----}}\right) \\
	\frac{\partial^2  \delta \left( \mu, \rho \right)}{\partial x_9 \partial x_1} &=  \left( 1 - d \right)  \left(  - \frac{2}{\rho_{--}}\right) + \frac{d}{2} \left( - \frac{2}{\mu_{----}}\right) \\
\end{align*}
and
\begin{align*}
	\frac{\partial^2  \delta \left( \mu, \rho \right)}{\partial x_2^2} &=  \left( 1 - d \right) \left( - \frac{1}{\rho_{++}}- \frac{1}{\rho_{--}}\right) + \frac{d}{2} \left(  - \frac{2}{x_2} - \frac{4}{\mu_{----}}\right)\\
	\frac{\partial^2  \delta \left( \mu, \rho \right)}{\partial x_3 \partial x_2} &= \left( 1 - d \right) \left( - \frac{1}{\rho_{++}} - \frac{2}{\rho_{--}}\right) + \frac{d}{2} \left( - \frac{4}{\mu_{----}} \right)\\
	\frac{\partial^2  \delta \left( \mu, \rho \right)}{\partial x_4 \partial x_2} &=  \left( 1 - d \right) \left(  - \frac{1}{\rho_{++}}- \frac{2}{\rho_{--}}\right) + \frac{d}{2} \left( - \frac{4}{\mu_{----}} \right)\\
	\frac{\partial^2  \delta \left( \mu, \rho \right)}{\partial x_5 \partial x_2} &=  \left( 1 - d \right) \left( - \frac{1}{\rho_{--}}\right) + \frac{d}{2} \left(  - \frac{4}{\mu_{----}}\right)\\
\end{align*}
\begin{align*}
	\frac{\partial^2  \delta \left( \mu, \rho \right)}{\partial x_6 \partial x_2} &=  \left( 1 - d \right) \left( - \frac{1}{\rho_{--}}\right) + \frac{d}{2} \left( - \frac{4}{\mu_{----}} \right)\\
	\frac{\partial^2  \delta \left( \mu, \rho \right)}{\partial x_7 \partial x_2} &=  \left( 1 - d \right) \left( - \frac{1}{\rho_{--}}\right) + \frac{d}{2} \left( - \frac{2}{\mu_{----}} \right)\\
	\frac{\partial^2  \delta \left( \mu, \rho \right)}{\partial x_8 \partial x_2} &= \left( 1 - d \right) \left( - \frac{1}{\rho_{--}}\right) + \frac{d}{2} \left( - \frac{2}{\mu_{----}} \right)\\
	\frac{\partial^2  \delta \left( \mu, \rho \right)}{\partial x_9 \partial x_2} &=  \left( 1 - d \right) \left(  - \frac{1}{\rho_{++}}- \frac{1}{\rho_{--}}\right) + \frac{d}{2} \left( - \frac{2}{\mu_{----}} \right).\\
\end{align*}
We continue with
\begin{align*}
	\frac{\partial^2  \delta \left( \mu, \rho \right)}{\partial x_3^2} &=  \left( 1 - d \right) \left(-\frac{1}{\rho_{++}} - \frac{1}{\rho_{+-}} - \frac{4}{\rho_{--}} \right) + \frac{d}{2} \left( - \frac{2}{x_3} - \frac{4}{\mu_{----}}\right) \\
	\frac{\partial^2  \delta \left( \mu, \rho \right)}{\partial x_4 \partial x_3} &= \left( 1 - d \right) \left( -\frac{1}{\rho_{++}} - \frac{4}{\rho_{--}}\right) + \frac{d}{2} \left( -   \frac{4}{\mu_{----}} \right) \\
	\frac{\partial^2  \delta \left( \mu, \rho \right)}{\partial x_5 \partial x_3} &= \left( 1 - d \right) \left( -\frac{1}{\rho_{+-}} - \frac{2}{\rho_{--}}\right) + \frac{d}{2} \left( -   \frac{4}{\mu_{----}} \right) \\
	\frac{\partial^2  \delta \left( \mu, \rho \right)}{\partial x_6 \partial x_3} &= \left( 1 - d \right) \left(  - \frac{2}{\rho_{--}}\right) + \frac{d}{2} \left( -   \frac{4}{\mu_{----}} \right) \\
	\frac{\partial^2  \delta \left( \mu, \rho \right)}{\partial x_7 \partial x_3} &= \left( 1 - d \right) \left( -\frac{1}{\rho_{+-}}  - \frac{2}{\rho_{--}}\right) + \frac{d}{2} \left( -   \frac{2}{\mu_{----}} \right) \\
	\frac{\partial^2  \delta \left( \mu, \rho \right)}{\partial x_8 \partial x_3} &= \left( 1 - d \right) \left(  - \frac{2}{\rho_{--}}\right) + \frac{d}{2} \left( -   \frac{2}{\mu_{----}} \right) \\
	\frac{\partial^2  \delta \left( \mu, \rho \right)}{\partial x_9 \partial x_3} &= \left( 1 - d \right) \left( - \frac{1}{\rho_{++}} - \frac{2}{\rho_{--}}\right) + \frac{d}{2} \left( -   \frac{2}{\mu_{----}} \right)
\end{align*}
and
\begin{align*}
	\frac{\partial^2  \delta \left( \mu, \rho \right)}{\partial x_4^2} &= \left( 1 - d\right) \left( - \frac{1}{\rho_{++}} - \frac{1}{\rho_{-+}} - \frac{4}{\rho_{--}}\right) + \frac{d}{2} \left(- \frac{2}{x_4} - \frac{4}{\mu_{----}}\right) \\
	\frac{\partial^2  \delta \left( \mu, \rho \right)}{\partial x_5 \partial x_4} &=  \left( 1 - d\right) \left( - \frac{2}{\rho_{--}}\right)  + \frac{d}{2} \left( - \frac{4}{\mu_{----}}\right) \\
	\frac{\partial^2  \delta \left( \mu, \rho \right)}{\partial x_6 \partial x_4} &= \left( 1 - d\right) \left(- \frac{1}{\rho_{-+}} - \frac{2}{\rho_{--}} \right)  + \frac{d}{2} \left( - \frac{4}{\mu_{----}}\right) \\
	\frac{\partial^2  \delta \left( \mu, \rho \right)}{\partial x_7 \partial x_4} &= \left( 1 - d\right) \left(- \frac{2}{\rho_{--}}\right)  + \frac{d}{2} \left( - \frac{2}{\mu_{----}}\right)\\
	\frac{\partial^2  \delta \left( \mu, \rho \right)}{\partial x_8 \partial x_4} &= \left( 1 - d\right) \left(- \frac{1}{\rho_{-+}} - \frac{2}{\rho_{--}}\right)  + \frac{d}{2} \left( - \frac{2}{\mu_{----}}\right)\\
	\frac{\partial^2  \delta \left( \mu, \rho \right)}{\partial x_9 \partial x_4} &= \left( 1 - d\right) \left(- \frac{1}{\rho_{++}} - \frac{2}{\rho_{--}}\right)  + \frac{d}{2} \left( - \frac{2}{\mu_{----}}\right)
\end{align*}
and
\begin{align*}
	\frac{\partial^2  \delta \left( \mu, \rho \right)}{\partial x_5^2} &=  \left( 1 - d\right)  \left(  - \frac{1}{\rho_{+-}} - \frac{1}{\rho_{--}}\right)  + \frac{d}{2} \left( - \frac{2}{x_5} - \frac{4}{\mu_{----}} \right) \\
	\frac{\partial^2  \delta \left( \mu, \rho \right)}{\partial x_6 \partial x_5} &= \left( 1 - d\right)  \left( - \frac{1}{\rho_{--}} \right)  + \frac{d}{2} \left(  - \frac{4}{\mu_{----}}\right)\\
	\frac{\partial^2  \delta \left( \mu, \rho \right)}{\partial x_7 \partial x_5} &= \left( 1 - d\right)  \left( - \frac{1}{\rho_{+-}} - \frac{1}{\rho_{--}} \right)  + \frac{d}{2} \left( - \frac{2}{\mu_{----}}  \right)\\
	\frac{\partial^2  \delta \left( \mu, \rho \right)}{\partial x_8 \partial x_5} &=\left( 1 - d\right)  \left(- \frac{1}{\rho_{--}}  \right)  + \frac{d}{2} \left( - \frac{2}{\mu_{----}} \right) \\
	\frac{\partial^2  \delta \left( \mu, \rho \right)}{\partial x_9 \partial x_5} &= \left( 1 - d\right)  \left(- \frac{1}{\rho_{--}}  \right)  + \frac{d}{2} \left( - \frac{2}{\mu_{----}} \right)
\end{align*}
and
\begin{align*}
	\frac{\partial^2  \delta \left( \mu, \rho \right)}{\partial x_6^2} &=  \left( 1 - d\right)  \left( - \frac{1}{\rho_{-+}} - \frac{1}{\rho_{--}} \right)  + \frac{d}{2} \left(  - \frac{2}{x_6} - \frac{4}{\mu_{----}}\right) \\
	\frac{\partial^2  \delta \left( \mu, \rho \right)}{\partial x_7 \partial x_6} &= \left( 1 - d\right)  \left(  - \frac{1}{\rho_{--}}\right)  + \frac{d}{2} \left(  - \frac{2}{\mu_{----}}\right)\\
	\frac{\partial^2  \delta \left( \mu, \rho \right)}{\partial x_8 \partial x_6} &=\left( 1 - d\right)  \left( - \frac{1}{\rho_{-+}} - \frac{1}{\rho_{--}}\right)  + \frac{d}{2} \left( - \frac{2}{\mu_{----}} \right) \\
	\frac{\partial^2  \delta \left( \mu, \rho \right)}{\partial x_9 \partial x_6} &= \left( 1 - d\right)  \left( - \frac{1}{\rho_{--}} \right)  + \frac{d}{2} \left(  - \frac{2}{\mu_{----}}\right)
\end{align*}
and
\begin{align*}
	\frac{\partial^2  \delta \left( \mu, \rho \right)}{\partial x_7^2} &=  \left( 1 - d\right)  \left(- \frac{1}{\rho_{+-}} - \frac{1}{\rho_{--}} \right)  + \frac{d}{2} \left( -\frac{1}{x_7} - \frac{1}{\mu_{----}}\right) \\
	\frac{\partial^2  \delta \left( \mu, \rho \right)}{\partial x_8 \partial x_7} &=\left( 1 - d\right)  \left(- \frac{1}{\rho_{--}}\right)  + \frac{d}{2} \left( - \frac{1}{\mu_{----}}\right) \\
	\frac{\partial^2  \delta \left( \mu, \rho \right)}{\partial x_9 \partial x_7} &= \left( 1 - d\right)  \left(- \frac{1}{\rho_{--}}\right)  + \frac{d}{2} \left(- \frac{1}{\mu_{----}} \right)
\end{align*}
and
\begin{align*}
	\frac{\partial^2  \delta \left( \mu, \rho \right)}{\partial x_8^2} &=  \left( 1 - d\right)  \left(- \frac{1}{\rho_{-+}} - \frac{1}{\rho_{--}} \right)  + \frac{d}{2} \left( -\frac{1}{x_8} - \frac{1}{\mu_{----}}\right) \\
	\frac{\partial^2  \delta \left( \mu, \rho \right)}{\partial x_9 \partial x_8} &=\left( 1 - d\right)  \left(- \frac{1}{\rho_{--}}\right)  + \frac{d}{2} \left( - \frac{1}{\mu_{----}}\right) \\
	\frac{\partial^2  \delta \left( \mu, \rho \right)}{\partial x_9^2} &=  \left( 1 - d\right)  \left(- \frac{1}{\rho_{++}} - \frac{1}{\rho_{--}} \right)  + \frac{d}{2} \left( -\frac{1}{x_9} - \frac{1}{\mu_{----}}\right).
\end{align*}
Recall the definition of $\mu^*$
\begin{align*}
	\mu_{++++}^* &= \mu_{----}^* = \mu_{+-+-}^* = \mu_{-+-+}^* = \frac{e^{-2 \beta}}{4 \left( 1 + e^{- \beta}\right)^2}\\
	\mu_{+--+}^* &= \mu_{--++}^* = \mu_{-++-}^* = \mu_{++--}^* = \frac{1}{4 \left( 1 + e^{- \beta}\right)^2}\\
	\mu_{+++-}^* &= \mu_{++-+}^* = \mu_{+-++}^* = \mu_{-+++}^* = \mu_{---+}^*= \mu_{--+-}^*= \mu_{-+--}^*= \mu_{+---}^* = \frac{e^{- \beta}}{4 \left( 1 + e^{- \beta}\right)^2}
\end{align*}
which implies
\begin{align*}
	\rho_{++}^* = \rho_{+-}^* = \rho_{-+}^* = \rho_{--}^* = \frac{1}{4}
\end{align*}
and
\begin{align*}
	\delta \left( \mu^*, \rho^* \right)
	&= \left(  1 - d\right) \textrm{H}\left( \rho^*\right) + \frac{d}{2} \textrm{H}\left( \mu^* \right) - d \beta  \left( x_3^* + x_4^* + x_5^* + x_6^* +  x_7^* + x_8^* + x_9^* + \mu_{----}^* \right)\\
	&=  \left( 2 - d\right)  \log \left( 2\right) + d \log \left(1 + e^{- \beta}  \right).
\end{align*}
Evaluating the above derivatives at $\mu^*, \rho^*$ we obtain the Hessian at $\mu^*, \rho^*$.
\begin{align*}
	\mathrm{D}^2 \delta \left( \mu^*, \rho^* \right) &=  4 \left( d - 1 \right) \begin{pmatrix} 
		6 & 2 & 5 & 5 & 3 & 3 & 3 & 3 & 2  \\
		2 & 2 & 3 & 3 & 1 & 1 & 1 & 1 & 2  \\ 
		5 & 3 & 6 & 5 & 3 & 2& 3& 2 &3  \\
		5 & 3 & 5 & 6 & 2 &3&2&3 &3  \\
		3 & 1 & 3 & 2& 2& 1&2& 1 &1  \\
		3 & 1 & 2 & 3 & 1 & 2& 1& 2 &1  \\
		3 & 1 & 3 & 2 & 2 & 1& 2& 1 &1  \\
		3 & 1 & 2 & 3 & 1 & 2& 1& 2 &1  \\
		2 & 2 & 3 & 3 & 1 & 1& 1& 1 &2  \\
	\end{pmatrix} \\
	& \qquad -  2 d \frac{\left( 1 + e^{- \beta}\right)^2}{e^{-2 \beta}} \begin{pmatrix} 
		4 & 4 & 4 & 4 & 4 & 4 & 2 & 2 & 2  \\ 
		4 & 4 & 4 & 4 & 4 & 4 & 2 & 2 & 2  \\ 
		4 & 4 & 4 & 4 & 4 & 4& 2& 2 &2  \\
		4 & 4 & 4 & 4 & 4 &4&2&2 &2  \\
		4 & 4 & 4 & 4& 4& 4&2& 2 &2  \\
		4 & 4 & 4 & 4 & 4 & 4& 2& 2 &2  \\
		2 & 2 & 2 & 2 & 2 & 2& 1& 1 &1  \\
		2 & 2 & 2 & 2 & 2 & 2& 1& 1 &1  \\
		2 & 2 & 2 & 2 & 2 & 2& 1& 1 &1  \\
	\end{pmatrix} \\
	& \qquad -  2 d \left( 1 + e^{- \beta}\right)^2 \begin{pmatrix} 
		2 & 0 & 0& 0 & 0 & 0 & 0 & 0 & 0 \\ 
		0 & 2 & 0 & 0& 0 & 0 & 0 & 0 & 0  \\ 
		0 & 0 & 2 e^{\beta} & 0 & 0 & 0& 0& 0 &0  \\
		0 & 0 & 0 & 2 e^{\beta} & 0 &0&0&0 &0  \\
		0 & 0 & 0 & 0& 2 e^{\beta}& 0&0& 0 &0  \\
		0 & 0 & 0 & 0 & 0 & 2 e^{\beta}& 0& 0 &0  \\
		0 & 0 & 0 & 0 & 0 & 0& e^{2 \beta}& 0 &0  \\
		0 & 0& 0 & 0 & 0 & 0& 0& e^{2 \beta} &0  \\
		0 & 0 & 0 & 0 & 0 & 0& 0& 0 &e^{2 \beta}  \\
	\end{pmatrix} \\
\end{align*}
The lemma now follows from calculating the determinant of the preceding expression.

\end{document}